\newcommand{\R}{\mathbb R}
\newcommand{\Q}{\mathbb Q}
\newcommand{\C}{\mathbb C}
\newcommand{\Z}{\mathbb Z}
\newcommand{\N}{\mathbb N}
\newcommand{\A}{\mathbb A}
\newcommand{\F}{\mathbb F}
\newcommand{\eps}{\varepsilon}
\newcommand{\minus}{\backslash}
\newcommand{\Hom}{\mathop{\rm Hom}\nolimits}
\newcommand{\tr}{\mathop{\rm tr}\nolimits}
\newcommand{\vol}{\mathop{\rm vol}\nolimits}
\newcommand{\GL}{\mathop{\rm GL}\nolimits}
\newcommand{\SO}{\mathop{{\rm SO}}\nolimits}
\newcommand{\Orth}{\mathop{{\rm O}}\nolimits}
\newcommand{\Unit}{\mathop{{\rm U}}\nolimits}
\newcommand{\SL}{\mathop{\rm SL}\nolimits}
\newcommand{\PSL}{\mathop{\rm PSL}\nolimits}
\newcommand{\SU}{\mathop{\rm SU}\nolimits}
\newcommand{\id}{\mathop{\rm id}\nolimits}
\newcommand{\res}{\mathop{\rm res}}
\newcommand{\Ind}{\rm Ind}
\newcommand{\diag}{\mathop{\rm diag}}
\newcommand{\supp}{\mathop{\rm supp}}
\newcommand{\Mat}{\mathop{\rm Mat}\nolimits}
\newcommand{\cpt}{{\bf K}}
\newcommand{\Ad}{\mathop{{\rm Ad}}\nolimits}
\newcommand{\aaa}{\mathfrak{a}}
\newcommand{\ooo}{\mathfrak{o}}
\newcommand{\qqq}{\mathfrak{q}}
\newcommand{\AAA}{\mathcal{A}}
\newcommand{\HHH}{\mathcal{H}}
\newcommand{\OOO}{\mathcal{O}}
\newcommand{\FFF}{\mathcal{F}}
\newcommand{\NNN}{\mathcal{N}}
\newcommand{\EEE}{\mathcal{E}}
\newcommand{\One}{\mathbf{1}}
\newcommand{\signature}{\underline{r}}
\newcommand{\Jgeom}{J_{\mathrm{geom}}}
\renewcommand{\Re}{{\rm Re}}
 \theoremstyle{plain}
 \newtheorem{theorem}{Theorem}
 \newtheorem{lemma}[theorem]{Lemma}
 \newtheorem{cor}[theorem]{Corollary}
 \newtheorem{proposition}[theorem]{Proposition}
 \newtheorem{rem}[theorem]{Remark}
 \theoremstyle{definition}
\def\namedlabel#1#2{\begingroup
   \def\@currentlabel{#2}
   \label{#1}\endgroup
}
\begin{document}

\title[Limit multiplicities for $\SL_2(\R^{\lowercase{r}_1}\oplus\C^{\lowercase{r}_2})$]{Limit multiplicities  for  $\SL_2(\OOO_F)$ in $\SL_2(\R^{\lowercase{r}_1}\oplus\C^{\lowercase{r}_2})$}
\author{Jasmin Matz}
\thanks{The majority of this work was done during a visit at the MSRI in 2014 which was supported by the NSF under grant No. DMS-0932078. This research was partially supported by the Israel Science Foundation (grant no.\ 1676/17)}
\address{Einstein Institute of Mathematics, Edmond J. Safra Campus, The Hebrew University of Jerusalem, Givat Ram, 9190401, Israel}
\email{jasmin.matz@mail.huji.ac.il}
\begin{abstract}
We prove that the family of lattices $\SL_2(\OOO_F)$, $F$ running over number fields with fixed archimedean signature $(r_1, r_2)$,  in $\SL_2(\R^{\lowercase{r}_1}\oplus\C^{\lowercase{r}_2})$ has the limit multiplicity property. 
\end{abstract}
\maketitle

\tableofcontents

Suppose $H$ is a semisimple Lie group and $\{\Gamma_j\}_{j\in\N}$ a family of lattices in $H$, that is, each $\Gamma_j$ is a discrete subgroup of $H$ and the quotient $\Gamma_j\backslash H$ has finite measure. Let $\widehat{H}$ denote the unitary dual of $H$. This is a topological space with respect to the Fell topology, and it carries a natural measure $\mu_{{\rm Pl}}$, the Plancherel measure. $H$ acts on $L^2(\Gamma_j\backslash H)$ via the right regular representation, and we write $L^2_{\text{disc}}(\Gamma_j\backslash H)$ for the maximal subspace of $L^2(\Gamma_j\backslash H)$ such that the restriction of the right regular representation to this subspace decomposes discretely. 
Each lattice $\Gamma_j$ defines a measure $\mu_j$ on $\widehat{H}$ by setting
\[
 \mu_j=\frac{1}{\vol(\Gamma_j\backslash H)} \sum_{\pi\in \widehat{H}} m_{\Gamma_j}(\pi) \delta_\pi
\]
where $m_{\Gamma_j}(\pi)$ denotes the multiplicity of $\pi$ in $L^2_{\text{disc}}(\Gamma_j\backslash H)$, and $\delta_\pi$ is the Dirac measure on $\widehat{H}$ at $\pi$. 
If we have $\vol(\Gamma_j\backslash H)\xrightarrow{\;j\rightarrow\infty\;}\infty$, a natural question is whether 
\[
\mu_j\longrightarrow \mu_{\rm Pl}
\]
as $j\rightarrow\infty$.
This is the limit multiplicity problem, and if  $\mu_j\longrightarrow \mu_{\rm Pl}$ holds, we say  that $\{\Gamma_j\}_{j\in\N}$ has the limit multiplicity property (see below for a more precise definition).

When the $\{\Gamma_j\}_{j\in\N}$ constitute a tower of congruence subgroups, and they are uniform in $H$ (i.e., co--compact) or of rank $1$, the limit multiplicity problem is known to have a positive solution in many cases, see \cite{DGWa78,De86,Sa89,Wa90,DeHo99}. More general situations of uniform lattices have been considered in \cite{AbBe12}.  
For non--uniform lattices of higher rank much less is known. Recently, it was shown in \cite{FiLaMu_limitmult,FiLa15} that the collection of congruence subgroups of $\SL_n(\OOO_F)$ has the limit multiplicity property in  $\SL_n(F\otimes \R)$ where $F$ is a fixed number field with ring of integers $\OOO_F$. A similar statement is true for other situations arising from classical groups under some natural hypotheses (which are expected to hold unconditionally) \cite{FiLaMu_limitmult,FiLa15}.  
Note that an important common feature of all these situations is that the lattices are pairwise commensurable.
The first instances of a sequence of non--commensurable, non--uniform lattices were first studied in~\cite{Ra13,Fr16} under certain assumptions when $H=\PSL_2(\C)$ or $\PSL_2(\R)$.
For further results and a more detailed history of the topic we refer to \cite[\S1]{FiLaMu_limitmult} and \cite[\S1]{AbBe12}. 

In general, one expects that any sequence $\{\Gamma_j\}_{j\in\N}$ of congruence subgroups with $\vol(\Gamma_j\backslash H)\rightarrow\infty$ satisfies the limit multiplicity property. If the sequence consists of non--congruence lattices, this does not necessarily need to be the case, see \cite{PhSa91}.

A number of related problems are considered in \cite{ShTe16,KST16}. For example, instead of the multiplicities of representations in $L_{\text{disc}}^2(\Gamma_j\backslash H)$ as $j\rightarrow\infty$, the limit behavior of multiplicities of discrete automorphic representations  with local constraints is studied when those local properties vary. 
 
The purpose of this paper is  to consider families of non--uniform non--commensurable lattices in certain Lie groups  which arise naturally  from number fields. This extends certain results of \cite{Ra13}.
 
Before stating our results we give a precise definition of the limit multiplicity property. We use the same definition as in \cite{FiLaMu_limitmult}. 
Let $\mathbf{\Gamma}=\{\Gamma\}$ be a collection of lattices in $H$. Then $\mathbf{\Gamma}$ is said to have the \emph{limit multiplicity property} if for every $\eps>0$ the following two properties hold:
\begin{enumerate}[label=(\Roman{*})]
\item\label{LMP1} For every bounded set $A\subseteq \widehat{H}_{\text{temp}}$ with $\mu_{\text{Pl}}(\partial A)=0$ we have $|\mu_{\Gamma}(A)-\mu_{\text{Pl}}(A)|<\eps$ for all but finitely many $\Gamma\in\mathbf{\Gamma}$. Here $\widehat{H}_{\text{temp}}$ denotes the tempered part of $\widehat{H}$.
\item\label{LMP2} For every bounded set $A\subseteq \widehat{H}\minus \widehat{H}_{\text{temp}}$ we have $\mu_{\Gamma}(A)<\eps$ for all but finitely many $\Gamma\in\mathbf{\Gamma}$.
\end{enumerate}
Here a subset $A\subseteq \widehat{H}$ is called bounded if the set of infinitesimal characters of the elements in $A$ is bounded. This is equivalent to $A$ being relatively quasi-compact in $\widehat{H}$. Note that the tempered part $\widehat{H}_{\text{temp}}$ of $\widehat{H}$ equals the support of the Plancherel measure.

We are going to be concerned mainly with groups $H$ which are essentially direct products of finitely many copies of $\SL_2(\R)$ and $\SL_2(\C)$. We therefore briefly recall what the unitary duals look like for $\SL_2(\R)$ and $\SL_2(\C)$.  For $\SL_2(\C)$, the unitary dual can be identified with a disjoint union of intervals in $\C$  \cite[\S II.4]{Knapp} and its topology is the usual one inherited from $\C$.  In the case of $\SL_2(\R)$, the unitary dual has a similar description \cite{Knapp}: After removing a finite number of (non--Hausdorff) points from $\widehat{\SL_2(\R)}$, it can be identified with a disjoint union of finitely many intervals and an infinite discrete collection of points in $\C$. The topology on this part of $\widehat{\SL_2(\R)}$ is again the one inherited from $\C$, and around the removed points a basis of neighborhoods can be described explicitly \cite[\S7.6]{Folland}.

 We now describe the results of this paper. 
From now on fix $d\geq 2$, and an archimedean signature $\signature=(r_1, r_2)$ of number fields of degree $d$ over $\Q$, that is, $r_1$ denotes the number of real and $r_2$ the number of pairs of complex embeddings so that $r_1+2r_2=d$. Let $\F_{\signature}$ be the set of all number fields with archimedean signature $\signature$.
For every $F\in\F_{\signature}$ we fix an isomorphism 
\[
F_{\infty}=F\otimes\R\simeq \R^{r_1}\times \C^{r_2}=:\R^{\signature}
\]
(as $\R$-algebras), and identify all $F_{\infty}$ with each other via these isomorphisms. This also fixes embeddings $F\hookrightarrow\R^{\signature}$. Let $\OOO_F\subseteq F$ be the ring of integers in $F$
Then
\[
 \Gamma_F:=\{\pm1\}\backslash\SL_2(\OOO_F)\subseteq H:=\{\pm1\}\backslash\SL_2(\R^{\signature})
\]
defines a family of non--commensurable non--cocompact lattices in $H$ with $\vol(\Gamma_F\backslash H)\longrightarrow\infty$ as $F$ varies over $\F_{\signature}$\footnote{Here and in the rest of the paper we mean the following when saying that for a function $\phi:\F_{\signature} \longrightarrow\C$ and $\phi_\infty\in\C$ we have  $\phi(F)\longrightarrow \phi_\infty$ as $F$ varies over $F\in \F_{\signature}$:  For any $\eps>0$ there exists a finite set $A\subseteq \F_{\signature}$ such that $|\phi(F)-\phi_\infty|<\eps$ whenever $F\in\F_{\signature}\smallsetminus A$. We also write $\lim_{F\in\F_{\signature}} \phi(F)=\phi_\infty$ for short. This is equivalent to considering $\phi:\F_{\signature}\longrightarrow \C$ as a net (with $\F_{\signature}$ being a directed set via $F_1\le F_2$ iff $D_{F_1}\le D_{F_2}$ for $D_{F_i}$ the absolute discriminant of $F_i$) and taking the limit of this net.
}.

Our first result is the following:

\begin{theorem}\label{thm:main}
The family of lattices $\{\Gamma_F\}_{F\in\F_{\signature}}$  in $H$ has the limit multiplicity property.
\end{theorem}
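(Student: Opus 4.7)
The plan is to apply the Arthur--Selberg trace formula for $\GL_2$ (or directly $\SL_2$) over each number field $F \in \F_{\signature}$, with a test function of the form $f_\infty \otimes \mathbf{1}_{K_F}$, where $K_F = \prod_{v\nmid\infty}\SL_2(\OOO_v)$ and $f_\infty$ is a fixed Paley--Wiener type function on $H$; the identifications $F_\infty \simeq \R^{\signature}$ let us take the same $f_\infty$ for every $F$. On the spectral side the discrete contribution equals $\vol(\Gamma_F\backslash H)\int_{\widehat{H}}\widehat{f}_\infty\,d\mu_F$ with $\widehat{f}_\infty(\pi):=\tr\pi(f_\infty)$, while on the geometric side the identity term equals $\vol(\Gamma_F\backslash H)\,f_\infty(1)=\vol(\Gamma_F\backslash H) \int_{\widehat{H}_{\text{temp}}}\widehat{f}_\infty\,d\mu_{\text{Pl}}$ by Plancherel inversion on $H$. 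Hence property \ref{LMP1}, for test functions of the form $\widehat{f}_\infty$ (which, as $f_\infty$ varies, approximate characteristic functions of $\mu_{\text{Pl}}$--regular bounded sets), reduces to showing that all remaining geometric terms and the continuous spectral contribution are $o(\vol(\Gamma_F\backslash H))$.

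By Humbert's volume formula, $\vol(\Gamma_F\backslash H)$ is, up to a constant depending only on $\signature$, equal to $D_F^{3/2}\zeta_F(2)$, so each error term must be bounded by $o(D_F^{3/2})$. The non--identity geometric contributions split into: unipotent weighted orbital integrals, whose leading coefficients are controlled by $h_FR_F$, which by the analytic class number formula is of size $D_F^{1/2+o(1)}$ and therefore negligible; regular elliptic contributions, indexed by anisotropic tori arising from imaginary quadratic extensions $E/F$, of which only finitely many classes contribute because $f_\infty$ is compactly supported, each bounded by the class number of the corresponding order; and regular hyperbolic contributions, indexed by conjugacy classes of units in real quadratic extensions $E/F$ of bounded logarithmic height. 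The Eisenstein (continuous spectrum) contribution is handled along the lines of \cite{FiLaMu_limitmult,FiLa15} via uniform bounds for logarithmic derivatives of completed Hecke $L$-functions over $F$.

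For property \ref{LMP2}, the non--tempered part of $L^2_{\text{disc}}(\Gamma_F\backslash H)$ is accounted for by the trivial representation (which contributes $\vol(\Gamma_F\backslash H)^{-1}\to 0$) together with possible complementary series components coming from cusp forms. Known bounds toward Ramanujan for $\GL_2/F$ (Kim--Sarnak, or even Selberg's $3/16$ suffices here) confine complementary series parameters to a uniformly bounded neighborhood of the tempered boundary; together with the explicit description of $\widehat{\SL_2(\R)}$ and $\widehat{\SL_2(\C)}$ recalled in the introduction, a Weyl--law type upper bound obtained by testing the trace formula against a suitable non--negative function concentrated near $A$ yields $\mu_F(A)\to 0$ for every fixed bounded $A\subset\widehat{H}\setminus\widehat{H}_{\text{temp}}$.

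The main obstacle is the uniform--in--$F$ analysis of the geometric side. In the more familiar setting where $F$ is fixed and one varies a tower of congruence subgroups, the arithmetic invariants of $F$ and of the quadratic extensions $E$ entering the orbital integrals are constant. Here $D_F$, $h_F$, $R_F$, together with the corresponding invariants of $E/F$, all grow with $F$, and the most delicate step is the regular hyperbolic term, where one must sum efficiently over units of bounded norm in families of real quadratic extensions of $F$ and show that the total remains $o(D_F^{3/2})$. Establishing this estimate uniformly in $F$ is the technical heart of the argument.
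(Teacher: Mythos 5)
Your high-level plan matches the paper's: apply the trace formula over each $F$ with a fixed archimedean test function $f_\infty$ and the characteristic function of the standard maximal compact at the finite places, match the central term to $\mu_{\rm Pl}(\widehat{f_\infty})$ by Plancherel inversion, and show all remaining geometric and continuous-spectral contributions are $o(\vol)$. You also locate the volume asymptotics and the Eisenstein bound correctly. But the sketch has a genuine gap exactly where you point---the uniform-in-$F$ geometric analysis---and the idea that fills it is absent. The paper's Proposition~\ref{prop:finitely:many:classes}, resting on Lemma~\ref{lem:succ:min:bound:below}, shows that there is a \emph{single finite set} of characteristic polynomials, fixed once and for all (depending only on $\supp f_\infty$ and $d$), such that for \emph{every} $F\in\F_{\signature}$ only geometric classes with characteristic polynomial in that set contribute to $j^{F,T}_{G\smallsetminus Z}(f_F)$. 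The mechanism is arithmetic: if $x\in\OOO_F$ generates a subfield having a primitive subfield $E\neq\Q$ with $D_E$ large, then $\|x\|_{\signature}\gg_d D_E^{4/d^3}$ (an Ellenberg--Venkatesh type lower bound), so compact support at infinity forces all contributing characteristic polynomials into a fixed finite arithmetic set. Your description of the delicate step as "summing efficiently over units of bounded norm in families of real quadratic extensions $E/F$" therefore misses the point twice: first, regular split (hyperbolic) classes of $\SL_2(F)$ and $\GL_2(F)$ have their eigenvalues in $F$ itself, not in a quadratic extension (it is the elliptic classes that live in quadratic extensions $F(\gamma)/F$, and those can be of any archimedean type); and second, there is no infinite sum to control---it is uniformly finite. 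Once Proposition~\ref{prop:finitely:many:classes} is in hand, what remains is to bound each of the (uniformly finitely many) truncated weighted orbital integrals explicitly in $D_F$ via Lemma~\ref{lem:orb:int:elliptic}, Proposition~\ref{prop:split:contribution}, Proposition~\ref{prop:unip:contr}, the regulator lower bound and the Brauer--Siegel type upper bound on $\res_{s=1}\zeta_K(s)$.

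Your treatment of property~\ref{LMP2} is also a departure, and an unnecessary one. The paper proves the single criterion $\mu_{\Gamma_F}(\widehat h)\to\mu_{\rm Pl}(\widehat h)$ for all $h\in\HHH(H)$ (eq.~\eqref{eq:sauvageot:criterion}) and invokes Sauvageot's density principle in the form of \cite[\S2]{FiLaMu_limitmult}, which yields both~\ref{LMP1} and~\ref{LMP2} simultaneously; no input towards Ramanujan and no separate Weyl-law argument near the non-tempered boundary is needed. Your detour through complementary-series parameter bounds could perhaps be made rigorous, but it adds work that the Sauvageot reduction renders superfluous, and it obscures the fact that the entire theorem reduces to the geometric and spectral estimates of Propositions~\ref{prop:geom:limit} and~\ref{prop:spectral:limit}.
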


In \cite{Ra13} the case of $\signature=(0,1)$ is considered and it is shown that the sequence of hyperbolic $3$--manifolds $\Gamma_F\backslash \SL_2(\C)/\SU(2)$, $F\in\F_{(0,1)}$, is Benjamini--Schramm-convergent\footnote{Roughly speaking, a sequence of locally Riemannian symmetric spaces $Y_j=\Gamma_j\backslash X$ (with normalization $\vol(Y_j)=1$ for any $j$) is Benjamini--Schramm convergent to $X$, if for any $r>0$, the measure of the set of all $y\in Y_j$ having injectivity radius at least $r$ tends to $1$ as $j\rightarrow 1$. See \cite[\S3]{AbBe12} for more details.} to the universal cover $\SL_2(\C)/\SU(2)$. This implies the limit multiplicity property for $\{\Gamma_F\}_{F\in\F_{(0,1)}}$. Similar results for more general sequences of lattices in $\PSL_2(\R)$ or $\PSL_2(\C)$ were established in \cite{Fr16}.

To start the proof of Theorem \ref{thm:main} we first use a variant of Sauvageot's density principle similarly as in \cite{Sa97,Sh12,FiLaMu_limitmult} (see \cite[\S2]{FiLaMu_limitmult} for a precise statement).
Let $\HHH(H)$ denote the algebra of all smooth compactly supported functions on $H$ which are left and right $\cpt_{\infty}'$-finite, where $\cpt_{\infty}'=\cpt_\infty/\{\pm 1\}\subseteq H$ is a certain maximal compact subgroup of $H$ and $\cpt_\infty$ is defined in Section~\ref{sec:contr:elements} below.
Then to prove Theorem \ref{thm:main} we need to show that for every $h\in\HHH(H)$ we have
\begin{equation}\label{eq:sauvageot:criterion}
 \mu_{\Gamma_F}(\hat{h})\longrightarrow \mu_{\text{Pl}}(\hat{h}) = h(1) 
\end{equation}
as $F$ varies over $\F_{\signature}$. Here $\hat{h}(\pi)=\tr h(\pi)$ for $\pi\in\widehat{H}$.

In fact, we are going to show the following effective version of \eqref{eq:sauvageot:criterion}:

\begin{theorem}\label{thm:effective}
 Let $\eps>0$. For every $h\in \HHH(H)$ and $F\in\F_{\signature}$ we have 
 \[
  \left|\mu_{\Gamma_F}(\widehat{h}) - \mu_{\text{Pl}}(\widehat{h})\right|
  \ll_{d, h,\eps} \vol(\Gamma_F\backslash H)^{-\frac{1}{3}+\eps}.
 \]
\end{theorem}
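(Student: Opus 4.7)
The approach is via the Arthur--Selberg trace formula for $\SL_2$ over the number field $F$. Let $\phi_F$ be the adelic test function on $\SL_2(\A_F)$ obtained by tensoring $h$ at the archimedean places with $\One_{\SL_2(\widehat{\OOO}_F)}$ at the finite places. Since $\Gamma_F$ is, modulo $\{\pm 1\}$, the congruence subgroup corresponding to $\SL_2(\widehat{\OOO}_F)$, the trace formula identity reads
\begin{equation*}
\sum_{\pi} m_{\Gamma_F}(\pi)\hat h(\pi) + J_{\mathrm{Eis}}(\phi_F) = \vol(\Gamma_F\backslash H)\, h(1) + \sum_{O} J_O(\phi_F),
\end{equation*}
where $O$ ranges over non-identity $\SL_2(F)$-conjugacy classes -- unipotent, split-semisimple (hyperbolic), and non-split-semisimple (elliptic). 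The identity geometric term equals $\vol(\Gamma_F\backslash H)\mu_{\mathrm{Pl}}(\hat h)$, so using Borel's volume formula $\vol(\Gamma_F\backslash H)\asymp_d D_F^{3/2}\zeta_F(2)$, Theorem~\ref{thm:effective} reduces to showing
\begin{equation*}
|J_{\mathrm{Eis}}(\phi_F)| + \sum_O |J_O(\phi_F)| \ll_{d,h,\eps} D_F^{1+\eps}.
\end{equation*}

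The geometric error terms are then controlled one type at a time. The unipotent contribution involves the number of cusps $h_F$ and the parabolic volume $\vol(N\cap\Gamma_F\backslash N(\R))\asymp\sqrt{D_F}$; combined with Brauer--Siegel's bound $h_F\ll_\eps D_F^{1/2+\eps}$ (valid at fixed degree $d$ thanks to Zimmert's uniform lower bound $R_F\gg_d 1$) this stays within $D_F^{1+\eps}$. The elliptic and hyperbolic contributions are sums over traces $t=\tr\gamma\in\OOO_F$; since $\supp h$ is compact, only traces with $\sigma(t)$ in a fixed bounded region of $F_\infty$ contribute, and for $D_F$ large the covolume $\asymp\sqrt{D_F}$ of $\OOO_F$ in $F_\infty$ leaves only $O_h(1)$ such traces (similarly, compactness combined with Zimmert forces the hyperbolic eigenvalues to lie in a bounded slice of $\OOO_F^\times\otimes\R$, giving $O_h(1)$ hyperbolic classes as well). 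For each remaining $t$ the centralizer is the norm-one torus associated to $E_t=F[x]/(x^2-tx+1)$ with $D_{E_t}\ll_h D_F^2$ in the elliptic case, or a split torus in the hyperbolic case; applying Brauer--Siegel to $E_t$ bounds the centralizer volume (and hence $J_O(\phi_F)$) by $D_F^{1+\eps}$. On the spectral side, Maass--Selberg truncation expresses $J_{\mathrm{Eis}}(\phi_F)$ through the scattering matrix of $\SL_2(\OOO_F)$, which is built from $\zeta_F$ and Hecke $L$-functions; convexity bounds at $\Re s=1$ combined with $h_F\ll_\eps D_F^{1/2+\eps}$ give $|J_{\mathrm{Eis}}(\phi_F)|\ll D_F^{1+\eps}$, in fact comfortably below.

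The principal obstacle is the uniform control of class numbers and regulators of the quadratic extensions $E_t$ as $F$ varies, for which Brauer--Siegel is available (ineffectively, but with the $\eps$-power we need) since $[E_t:\Q]=2d$ remains bounded; the $\Gamma_F$-conjugacy classes inside each $\SL_2(F)$-class are implicitly enumerated by the finite-place orbital integrals via the Picard group of the relevant $\OOO_F$-order in $E_t$. A secondary difficulty is the careful bookkeeping of measure normalisations between the adelic Arthur distributions and the classical archimedean Plancherel measure, essential for identifying the identity geometric term with $\vol(\Gamma_F\backslash H)\mu_{\mathrm{Pl}}(\hat h)$ and for handling Arthur's truncation corrections to the unipotent and Eisenstein distributions uniformly in $F$.
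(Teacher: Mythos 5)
Your overall strategy -- adelize, apply the trace formula for $\SL_2$ over $F$ with test function $h\cdot\One_{\cpt_f^F}$, and bound the non-identity geometric terms plus the continuous spectrum uniformly in $F$ via Zimmert's regulator bound and class-number estimates -- is exactly that of the paper. However, two steps of your sketch would not go through as written, and both require a specific tool beyond what you invoke.

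First, the inference ``the covolume $\asymp\sqrt{D_F}$ of $\OOO_F$ in $F_\infty$ leaves only $O_h(1)$ such traces'' is not valid: a lattice of large covolume can still have many points in a fixed compact set, and indeed $\Z\subseteq\OOO_F$ always contributes $\asymp R$ points to a ball of radius $R$ regardless of $D_F$. What one actually needs (and what the paper proves in Lemma~\ref{lem:succ:min:bound:below} and Proposition~\ref{prop:finitely:many:classes}) is the Ellenberg--Venkatesh-type lower bound $Q_E(a)\gg D_E^{2/d_E^2}$ for $a\in\OOO_E^0\smallsetminus\{0\}$ from \cite{ElVe06}, which forces any $x\in\OOO_F$ of bounded archimedean norm to generate a subfield of bounded discriminant, hence to lie in a fixed discrete set $\Lambda$ independent of $F$. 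Covolume is the right heuristic but not a proof; the same caveat applies to the hyperbolic case, where Zimmert's regulator bound alone does not give a uniform count of contributing units.

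Second, ``convexity bounds at $\Re s=1$'' are not the right tool for $J_{\mathrm{Eis}}$. The continuous-spectrum contribution involves the logarithmic derivative $M(\lambda)^{-1}M'(\lambda)$ of the scattering matrix, hence $\zeta_F'/\zeta_F$ and $L'/L(1+it,\mu)$ for the $h_F$ unramified Hecke characters $\mu$; pointwise bounds on $|L(1+it,\mu)|$ do not control $|L'/L(1+it,\mu)|$. The paper uses a winding-number estimate (Lemma~\ref{lem:winding:number}, following M\"uller's method in \cite{Mu07}) giving $\int_{\R}|L'/L(1+it,\mu)|(1+|t|)^{-4}\,dt\ll\log D_F$ uniformly in $F$ and $\mu$, and this cannot be replaced by a convexity argument. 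With these two tools substituted in, the remainder of your outline -- including the truncation and measure bookkeeping you flag, which the paper handles by bounding the truncated distributions $j^{F,T}_{G\smallsetminus Z}$ on the cone $\alpha(T)\geq\rho\log D_F$ and then interpolating the degree-one polynomial $J^{F,T}_{\text{geom}\smallsetminus Z}$ to $T=0$ -- matches the paper's proof.
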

Here and in the following the notation $\ll_{a,b,\ldots}$ means that the implied constant depends on the quantities $a,b,\ldots$ but on no other.

\begin{rem}
A combination of our methods with \cite{FiLaMu_limitmult,FiLa15} should also give the limit multiplicity property for the bigger collection of all discrete subgroups in $H$ which are congruence subgroups of one of the $\Gamma_F$, $F\in\F_{\signature}$.
\end{rem}

We end the introduction by giving a sketch of the proof of \eqref{eq:sauvageot:criterion} and therefore Theorem~\ref{thm:main}. To prove Theorem \ref{thm:effective} each step in the proof has to be made effective. This comes out of our proof quite naturally. We want to use Arthur's trace formula so that we first translate the problem into adelic terms: Let $\A_F$ denote the ring of adeles of $F$, and $\A_{F,f}$ the finite part of $\A_F$. Let $G=\SL(2)$ as an algebraic group over $F$, and let $\cpt_f^F=\prod_{v<\infty}\cpt_v^F\subseteq G(\A_{F,f})$ be the usual maximal compact subgroup, that is, $\cpt_v^F=G(\OOO_{F_v})$ for $\OOO_{F_v}$ the ring of integers in the local field $F_v$. 
Then
\[
\Gamma_F\backslash H
\simeq G(F)\backslash G(\A_F)/\cpt_f^F,
\]
and the measure $\mu_{\Gamma_F}$ on $\widehat{H}$ becomes
\[
\mu_{\Gamma_F}= \frac{1}{\nu_F} \sum_{\pi\in\widehat{G(\A_F)}}
\dim\Hom_{G(\A_F)} \left(\pi, L^2(G(F)\backslash G(\A_F)/\cpt_f^F)\right)  ~\delta_{\pi_{\infty}},
\]
where
\begin{equation}\label{eq:def:volume}
\nu_F
= \vol(\Gamma_F\backslash H)
=\vol(G(F)\backslash G(\A_F)/ \cpt_f^F)
= \frac{\vol(G(F)\backslash G(\A_F))}{\vol(\cpt_f^F)}.
\end{equation}
Let $L^2_{\text{disc}}(G(F)\backslash G(\A_F))$ denote the discrete part of $L^2(G(F)\backslash G(\A_F))$, that is, the maximal subspace of $L^2(G(F)\backslash G(\A_F))$ which decomposes discretely under the right regular representation $R$. Equivalently, $L^2_{\text{disc}}(G(F)\backslash G(\A_F))$ equals the direct sum (with appropriate multiplicities) of those $\pi$ with $\Hom_{G(\A_F)} \left(\pi, L^2(G(F)\backslash G(\A_F)/\cpt_f^F)\right)\neq 0$.

We define $\HHH(G(\R^{\signature}))$ analogously as $\HHH(H)$ as the space of all left and right $\cpt_\infty$--finite functions in $C_c^\infty(G(\R^{\signature}))$. 
Let $h\in \HHH(G(\R^{\signature}))$ and set 
\[
h_1(g)=h(g)+ h(-g)
\]
so that $h_1\in\HHH(H)$. Let $\One_{\cpt_f^F}:G(\A_{F,f})\longrightarrow \C$ denote  the characteristic function of $\cpt_f^F$, and let $J_{\text{disc}}^F(h\cdot \One_{\cpt_f^F})$ denote the trace of $R(h\cdot \One_{\cpt_f^F})$ restricted to $L^2_{\text{disc}}(G(F)\backslash G(\A_F)^1)$. 
Then 
\begin{equation}\label{eq:sauv}
\mu_{\Gamma_F}(\hat{h_1}) = \frac{1}{\vol(G(F)\backslash G(\A_F))} J_{\text{disc}}^F(h\cdot \One_{\cpt_f^F})
\end{equation}
and $J_{\text{disc}}^F(h\cdot\One_{\cpt_f^F})$ can be viewed as a part of the spectral side  $J_{\text{spec}}^F(h\cdot\One_{\cpt_f^F})$ of Arthur's trace formula for $\SL(2)$ over $F$. In fact, we will show that the family of lattices $\{\Gamma_F\}_{F\in \F_{\signature}}$ has the spectral limit multiplicity property in the sense of Proposition \ref{prop:spectral:limit}, that is, $J_{\text{disc}}^F(h\cdot\One_{\cpt_f^F})$ constitutes the main term on the spectral side as $F$ varies over $\F_{\signature}$.
 
We then use the trace formula to equate the spectral side $J_{\text{spec}}^F(h\cdot\One_{\cpt_f^F})$ with the geometric side $J_{\text{geom}}^F(h\cdot\One_{\cpt_f^F})$. On the geometric side, the contribution from the center equals  
\[
\vol(G(F)\backslash G(\A_F)) \left(h(1)+ h(-1)\right)
\]
and constitutes the main term as $F$ varies over $\F_{\signature}$. Analogous to the spectral side, we are going to show that the family of lattices has the geometric limit multiplicity property, in the sense of Proposition \ref{prop:geom:limit}.
Putting all this together, we get that $\mu_{\Gamma_F}(\widehat{h_1})$ tends to 
\[
h(1)+h(-1)=h_1(1)=\mu_{{\rm Pl}}(\widehat{h_1}) 
\]
 as desired.

In this adelic formulation we prove the following statement which immediately implies Theorem~\ref{thm:effective} and also applies to $\GL(2)$:
\begin{proposition}
 Let $G=\SL(2)$ or $\GL(2)$ and  let $Z\subseteq G$ be the center of $G$. Let $G(\A_F)^1$ be the set of all $g\in G(\A_F)$ such that $|\det g|_{\A_F}=1$ (see \S\ref{sec:contr:elements} for the definition of $|\cdot|_{\A_F}$) and let $\cpt_\infty$ be the maximal compact subgroup of $G(\R^{\signature})^1$ defined in \S\ref{sec:contr:elements}. In generalization of \eqref{eq:def:volume} put
 \[
  \nu_F=\frac{\vol(G(F)\backslash G(\A_F)^1)}{\vol(\cpt_f^F)}.
 \]
 Then for every left and right $\cpt_\infty$--finite $h\in C_c^\infty(G(\R^{\signature})^1)$ and every $\eps>0$ we have 
 \[
\left|  \sum_{\pi}  \tr \pi_\infty (h) - \nu_F \sum_{z\in Z(F)\cap Z(\widehat{\OOO_F})} h(z)\right|
\ll_{h,\eps} \nu_F^{1-\delta_G +\eps},
 \]
where $\delta_{\SL(2)}=1/3$  and $\delta_{\GL(2)} = 1/4$. The $\pi$ run over all irreducible automorphic representations occurring in $L^2_\text{disc}(G(F)\backslash G(\A_F)^1)$ which are unramified at all non-archimedean places.
\end{proposition}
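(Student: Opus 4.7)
The plan is to apply Arthur's trace formula for $G$ over $F$ to the factorizable test function $f_F = h \otimes \One_{\cpt_f^F} \in C_c^\infty(G(\A_F)^1)$, isolate the central term on the geometric side as the stated main term, and control every remaining spectral and geometric contribution by $\nu_F^{1-\delta_G + \eps}$. Since the sum $\sum_\pi \tr \pi_\infty(h)$ restricted to representations unramified at every finite place is precisely $\tr R_\text{disc}(f_F)$, it equals the discrete part $J_\text{disc}^F(f_F)$ of the spectral side $J_\text{spec}^F(f_F)$ of the trace formula. The complementary continuous part $J_\text{cont}^F(f_F)$ is an integral against the Plancherel density of principal series representations induced from the Borel subgroup. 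Using the Gindikin--Karpelevich formula, the unramified intertwining operators and their logarithmic derivatives become explicit ratios of Dedekind zeta values of $F$; combining standard convexity bounds for $\zeta_F$ on vertical lines with the Brauer--Siegel estimate $\zeta_F^*(1) = D_F^{o(1)}$ should yield $|J_\text{cont}^F(f_F)| \ll_{h,\eps} \nu_F^{1-\delta_G + \eps}$.

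I would then use the identity $J_\text{spec}^F(f_F) = J_\text{geom}^F(f_F)$ and decompose the geometric side by the semisimple part of a conjugacy class: central, non-central elliptic, hyperbolic (split) semisimple, and unipotent. The central contribution equals
\[
\vol(G(F)\backslash G(\A_F)^1) \sum_{z \in Z(F)} f_F(z) = \nu_F \vol(\cpt_f^F) \sum_{z \in Z(F) \cap Z(\widehat{\OOO_F})} h(z),
\]
which, under the standard normalization $\vol(\cpt_f^F) = 1$, is exactly the announced main term. For the non-central elliptic contribution the finite-place orbital integrals against $\One_{\cpt_f^F}$ vanish outside $\gamma \in \cpt_f^F$, while the archimedean orbital integral of $h$ is supported on elliptic elements of trace controlled by $\supp h$; the sum thus reduces to counting $G(F)$-conjugacy classes of such $\gamma$, which via Gauss's mass formula is dominated by a sum of class numbers of imaginary quadratic extensions of $F$ of bounded relative discriminant. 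For the unipotent contribution I would use the explicit evaluation for $\SL(2)$ or $\GL(2)$ in terms of $\zeta_F^*(1)$, $\log D_F$, and weighted archimedean orbital integrals of $h$. The hyperbolic contribution is handled analogously by summing orbital integrals over the $F$-rational diagonal torus.

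The crux, and the source of the exponents $\delta_G$, lies in showing that each of the non-central elliptic, hyperbolic, and unipotent contributions is bounded by $\nu_F^{1-\delta_G + \eps}$. For $\SL(2)$ this should follow from the Brauer--Siegel bound $h_F R_F = D_F^{1/2+o(1)}$ together with counts of orders in imaginary quadratic $F$-algebras of controlled discriminant and uniform bounds on the archimedean orbital integrals of $h$, the latter being available because $h$ is $\cpt_\infty$-finite. For $\GL(2)$ the hyperbolic term is strictly larger because the split torus is two-dimensional and the orbital sum sees the full regulator $R_F$, which can grow like $D_F^{1/2+o(1)}$; this is what weakens the exponent from $1/3$ to $1/4$. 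I expect the main obstacle to be bounding Arthur's weighted orbital integrals at the unipotent and hyperbolic terms uniformly in $F$ as the truncation parameter varies, and matching the explicit local archimedean factors against the growth of $\zeta_F^*(1)$, $\log D_F$, $h_F$, and $R_F$ so that the claimed savings $\delta_G$ are actually realized.
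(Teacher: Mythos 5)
Your overall strategy — apply Arthur's trace formula to $f_F=h\cdot\One_{\cpt_f^F}$, identify the central geometric term as the main term, and bound the remaining geometric and spectral contributions — is exactly the paper's strategy. But there are several substantive gaps and one genuine misconception.

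\emph{Misattribution of the exponents.} You attribute the drop from $\delta_{\SL(2)}=1/3$ to $\delta_{\GL(2)}=1/4$ to the hyperbolic term for $\GL(2)$ ``seeing the full regulator $R_F$.'' That is not what happens. In the paper the geometric error divided by $\vol(G(F)\backslash G(\A_F)^1)$ is $\ll D_F^{-1/2+\eps}$ \emph{for both groups} (Proposition~\ref{prop:upper:bound}), and the split/hyperbolic contribution is actually a smaller-order term. The difference in $\delta_G$ comes entirely from the relation $\nu_F\asymp D_F^{2}$ for $\GL(2)$ versus $\nu_F\asymp D_F^{3/2}$ for $\SL(2)$ (which is ultimately a consequence of $\vol(\cpt_f^F)=D_F^{-3/2}$ versus $D_F^{-1}$, Lemma~\ref{lem:arch:compact}): the same $D_F^{-1/2}$ saving is a weaker power of $\nu_F$ when $\nu_F$ is larger. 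The regulator of $F$ enters the $\GL(2)$ volume through $\res_{s=1}\zeta_F$, but it contributes only logarithms, not a $D_F^{1/2}$-type power.

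\emph{The spectral side is not a single Plancherel integral.} You treat the continuous spectrum as ``an integral against the Plancherel density,'' but it is a sum over all pairs $(\chi_1,\chi_2)$ of everywhere-unramified Hecke characters, and there are $h_F^{2}$ (for $\GL(2)$) or $h_F$ (for $\SL(2)$) of them. This class-number factor is the quantitatively dominant input on the spectral side; the paper's Lemmas~\ref{lem:spec:term1} and~\ref{lem:spec:term2} bound each character's contribution and then multiply by $h_F^{a_G}$, using a winding-number bound $\int_{\R}|L'/L(1+it,\mu)|(1+|t|)^{-4}\,dt\ll\log D_F$ (Lemma~\ref{lem:winding:number}) rather than convexity bounds. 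Your sketch misses this entirely, and convexity bounds on $\zeta_F$ are not what is actually used nor, by themselves, what delivers the needed log-saving.

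\emph{Effectivity.} You invoke Brauer--Siegel ($\zeta_F^*(1)=D_F^{o(1)}$, $h_FR_F=D_F^{1/2+o(1)}$), but Brauer--Siegel's lower bound is ineffective. The proposition you are proving is effective in $\nu_F$, so you must replace this by effective inputs: Lavrik's $\res_{s=1}\zeta_F\le(\log D_F)^{d-1}$ for the upper bound on the residue and Zimmert's $R_F\ge R_0>0$ for a lower bound on $\vol(G(F)\backslash G(\A_F)^1)$ in the $\GL(2)$ case. Without these the claimed implied constants in $\ll_{h,\eps}$ would not be effective.

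\emph{Truncation and interpolation.} You correctly identify the uniform control of Arthur's weighted distributions in the truncation parameter $T$ as an obstacle, but you do not resolve it. The paper does three things you would need to replicate: (i) it shows, via a geometry-of-numbers bound on elements of $\OOO_F$ generating subfields of large discriminant (Lemma~\ref{lem:succ:min:bound:below} and Proposition~\ref{prop:finitely:many:classes}), that only a finite set of characteristic polynomials, fixed independently of $F$, can contribute; (ii) it bounds the modified (truncated) distributions $j^{F,T}_{G\smallsetminus Z}$ in the range $\alpha(T)\ge\rho\log D_F$, and (iii) it transfers the bound from $j^{F,T}$ to Arthur's polynomial $J_{\rm geom}^{F,T}$ using the explicit-in-$F$ approximation lemma (Lemma~\ref{lem:approx}) and then to $T=0$ by degree-one polynomial interpolation. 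Your plan as written never leaves the regime of ``bound the orbital sum for the actual distribution at $T=0$,'' which is not directly accessible.

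In short, the opening paragraph of your proposal is the right frame, but the mechanism you propose for achieving the exponents is incorrect, the spectral side is underdeveloped (missing the sum over $h_F^{a_G}$ characters), the effectivity requires replacing Brauer--Siegel by Lavrik/Zimmert, and the truncation/interpolation step that actually turns the orbital bounds into a bound on $J_{\rm geom}^F$ is a genuine missing ingredient, not just an ``obstacle.''
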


\begin{rem}
Theorem~\ref{thm:main} can also be viewed in the context of families of automorphic forms \cite{SaShTe14,ShTe16}. More precisely, let $A\subset \widehat{G(\R^{\signature})^1}_{\text{temp}}$ be a bounded subset with $\mu_{\text{Pl}}(\partial A)=0$. We define a family of automorphic forms as follows: For each $F\in \F_{\signature}$, let 
\[
\FFF(A,\Gamma_F)=\{\pi\subseteq L^2_{\text{disc}}(G(F)\backslash G(\A_F)^1)\mid \pi_{\infty}\in A\}. 
\]
Then $\FFF(A,\Gamma_F)$ is a finite set, and Theorem~\ref{thm:main} essentially provides an asymptotic count for the number of elements in $\FFF(A, \Gamma_F)$ in the sense that 
\[
 \frac{\left|\FFF(A,\Gamma_F)\right|}{\vol(G(F)\backslash G(\A_F)^1)}  \longrightarrow \mu_{\rm Pl}(A)
\]
as $F$ varies over $\F_{\signature}$. Here, as usual, we count the number of elements in $\FFF(A,\Gamma_F)$ with their multiplicities. 

In this picture we can incorporate the distribution of the Satake parameters at non-archimedean places. Namely, suppose that $S$ is a finite set of prime numbers. For each $p\in S$ fix a signature $\signature_p$ of $d$-dimensional extensions of $\Q_p$, and let $\signature_S = (\signature_p)_{p\in S}$.  Let $\F_{\signature, \signature_S}$ be the set of all $F\in \F_{\signature}$ such that $F_p=F\otimes\Q_p$ has signature $\signature_p$ at every $p\in S$. One can then study the sum 
\[
 \sum_{\pi} \tr\pi_{\infty\cup S} (h\cdot f_S)
\]
where $\pi$ runs over all discrete automorphic representations which are unramified outside of $\{v\mid v|\infty\text{ or }\exists p\in S:\ v|p\}$, $h\in \HHH(G(\R^{\signature})^1)$ and  $f_S\in C_c^\infty(G(F_S))$. Further, 
\[
\pi_{\infty\cup S}=\bigotimes_{v|\infty}\pi_v\otimes\bigotimes_{v| p, p\in S}\pi_v. 
\]
By our assumption, the groups $G(F_S)$ are isomorphic for all $F\in \F_{\signature, \signature_S}$ so that we can take the same test function $f_S$ for all $F$. One can then study the above sum as $F$ varies over $\F_{\signature, \signature_S}$.

However, there is a significant difference to the situation studied in \cite{Sh12,ShTe16,FiLaMu_limitmult,weyl2,KST16}: Since our family of lattices is not commensurable, we need to study an infinite sequence of trace formulas instead of using just one. More precisely, for each $F\in\F_{\signature}$ we need to study the trace formula for $G$ over $F$ and at the end compare these trace formulas with each other. This difference already becomes apparent in the situation studied in this paper compared to, e.g., \cite{FiLaMu_limitmult}.
\end{rem}

\section{Preliminaries}\label{sec:contr:elements}
In this section, $G$ equals $\SL(n)$ or $\GL(n)$ until further notice. For any number field $F$ we write $\A_F$ for the ring of integers of $F$, and $\A_F^1$ denotes the group of all $a\in\A_F^\times$ with $|a|_{\A_F}=1$, where $|\cdot|_{\A_F}$ denotes the adelic absolute value on $\A_F^\times$. If $v$ is any place of $F$, we write $|\cdot|_v$ for the norm on $F_v$, and let $|\cdot|_{\infty}=\prod_{v|\infty} |\cdot|_v$. We put $G(\A_F)^1=\{g\in G(\A_F)\mid |\det g|_{\A_F}=1\}$ and $G(\R^{\signature})^1=\{g\in G(\R^{\signature})\mid |\det g|_{\infty}=1\}$. 

\subsection{Maximal compact subgroups}
Let $F\in\F_{\signature}$ and $v$ be a place of $F$. If $v$ is non-archimedean, let $\OOO_{F_v}\subseteq F_v$ denote the ring of integers in $F_v$. Let
\[
\cpt_v^F=\cpt_{F_v}
=\begin{cases}
G(\OOO_{F_v})
&\text{if } v\text{ is non-archimedean},\\
\Orth(n)
&\text{if }G=\GL(n)\text{ and } v=\R,\\
\Unit(n)
&\text{if }G=\GL(n)\text{ and }v=\C,\\
\SO(n)
&\text{if }G=\SL(n)\text{ and } v=\R,\\
\SU(n)
&\text{if }G=\SL(n)\text{ and } v=\C
\end{cases}
\]
be the usual maximal compact subgroups in $G(F_v)$. 
 We further write $\cpt_f^F=\prod_{v<\infty}\cpt_v^F\subseteq G(\A_{F,f})$, $\cpt_{\infty}=\cpt_{\infty}^F=\prod_{v|\infty}\cpt_v^F\subseteq G(\R^{\signature})$, and $\cpt^F=\cpt_{\infty}^F\cdot\cpt_f^F$. Note that the group $\cpt_{\infty}$ does indeed not depend on $F$ but only on the signature $\signature$. We let $\HHH(G(\R^{\signature})^1)$ denote the space of all left and right $\cpt_\infty$-finite functions in $C_c^\infty(G(\R^{\signature}))$.

\subsection{Measures}\label{sec:measures}
We need to define measures on $G(\A_F)$ and its subgroups in a way that they are compatible when $F$ varies in $\F_{\signature}$. More precisely, using the fixed isomorphisms  $G(F_\infty)\simeq G(\R^{\signature})$ and $\cpt_\infty^F\simeq \cpt_{\infty}$, we get measures on $G(F_\infty)/\cpt_\infty^F\simeq G(\R^{\signature})/\cpt_{\infty}$ for every $F\in\F_{\signature}$. We therefore obtain canonical measures on every quotient 
$\Gamma_F\backslash G(F_\infty)^1/\cpt_\infty^F\simeq \Gamma_F\backslash G(\R^{\signature})^1/\cpt_{\infty}$, with $\Gamma_F=G(\OOO_F)$ for
$F\in \F_{\signature}$, where we used the counting measure on $G(\OOO_F)$, and used the isomorphism $G(\R^{\signature})\simeq \R_{>0}\times G(\R^{\signature})^1$ to define the measure on $G(\R^{\signature})^1$.
Now the choice of measures on $G(\A_F)^1$ and its subgroups gives a quotient measure on $G(F)\backslash G(\A_F)^1/\cpt^F$.
Since $G(F)\backslash G(\A_F)^1/\cpt^F$ equals a finite disjoint union $\bigsqcup \Gamma_F\backslash G(F_{\infty})^1/\cpt_{\infty}^F= \bigsqcup \Gamma_F\backslash G(\R^{\signature})^1/\cpt_{\infty}$, we need to make sure that they are compatible with the measures on $\Gamma_F\backslash G(F_\infty)^1/\cpt_\infty^F$ as constructed above.

Let $v$ be an archimedean or non-archimedean place of $\Q$, and let $E$ be a finite extension of $\Q_v$ (with $\Q_v:=\R$ if $v=\infty$). If $E$ is non-archimedean, let $\OOO_E$ denote the ring of integers in $E$. In that case we normalize the Haar measures on $E$ and $E^{\times}$ such that $\OOO_E$ and $\OOO_E^\times$ both have measure $\N(\partial_E)^{-\frac{1}{2}}$ where $\partial_E\subseteq\OOO_E$ denotes the different of extension $E/\Q_v$. If $E$ is archimedean, we take the usual Lebesgue measure $d_Ex$ on $E$, and $d_E x/|x|_E$ on $E^\times$. Note that $|x|_\C=x\bar{x}$ if $E=\C$.

The Haar measure on the maximal compact subgroups $\cpt_E$ is normalized such that $\cpt_E$ has measure $1$.

Now let $F$ be an arbitrary number field of degree $d=[F:\Q]$ and absolute discriminant $D_F$.
We take the product measures $dx=\prod_v dx_v$ and $d^{\times}x=\prod_v d^{\times}x_v$ on $\A_F$ and $\A_F^{\times}$, respectively.
Using the identification $\R_{>0}\ni t\mapsto (t^{1/d}, \ldots, t^{1/d},1,\ldots)\in F_{\infty}^{\times}\subseteq \A_F^{\times}$,  we get an isomorphism
$\A_F^{\times}\simeq \R_{>0}\times\A_F^1$
that also fixes a measure $d^{\times}b$ on $\A_F^1$ via
$d^{\times}x= d^{\times} b \frac{d t}{t}$
for $d^{\times} x$ the previously defined measure on $\A_F^{\times}$ and $\frac{dt}{t}=d^{\times} t$ the usual multiplicative Haar measure on $\R_{>0}$.
With this choice of measures we get
\[
\vol(F\backslash \A_F)=1
\;
\text{ and }
\;\;
\vol(F^{\times}\backslash \A_F^{1})=\res_{s=1}\zeta_F(s)
\]
where $\zeta_F(s)$ is the Dedekind zeta function of $F$
(cf. \cite[Chapter XIV, \S 7, Proposition 9]{La86}).

Let $Z\subseteq G$ be the center of $G$ and let $T_0\subseteq G$ denote the maximal split torus of diagonal matrices. Let $B\subseteq G$ be the Borel subgroup of all upper triangular matrices. Write $B=T_0U_0$. 
The above conventions also determine normalizations of measures on $T_0(\A_F)$, $T_0(\A_F)^1$, $T_0(F_v)$ (by identifying these groups with appropriate multiplicative groups via their coordinates), and  $U_0(\A_F)$ and  $U_0(F_v)$  (by identifying these groups with some affine space via the coordinates again).

The measure on $G(\A_F)$ is then normalized by using Iwasawa decomposition $G(\A_F)=U_0(\A_F) T_0(\A_F)\cpt^F =T_0(\A_F)U_0(\A_F)\cpt^F$: For any integrable function $f:G(\A_F)\longrightarrow\C$ we have
\begin{align*}
\int_{G(\A_F)}f(g)dg
&=\int_{\cpt^F}\int_{T_0(\A_F)}\int_{U_0(\A_F)}\delta_{0}(m)^{-1}f(umk)du\,dm\,dk\\
&=\int_{\cpt^F}\int_{T_0(\A_F)}\int_{U_0(\A_F)}f(muk)du\,dm \,dk
\end{align*}
(similarly for the groups $G(F_v)=U_0(F_v)T_0(F_v)\cpt_v^F$ over the local fields), where  $\delta_0=\delta_{P_0}$ is the modulus function for the adjoint action of $T_0$ on $U_0$.
If $G=\GL(n)$, we define a measure on $G(\A_F)^1$ via the exact sequence
\[
1\longrightarrow G(\A_F)^1\longrightarrow G(\A_F)\xrightarrow{\;g\mapsto|\det g|_{\A_F}\;}\R_{>0}\longrightarrow 1.
\]
To normalize measures on the adelic quotient we use the counting measure on the discrete groups $G(F)$.

We now specialize to $G=\SL(2)$ or $\GL(2)$. Then with the above measures we get
\begin{equation}\label{measures}
 \vol(G(F)\backslash G(\A_F)^1)=
\begin{cases}
 D_F^{1/2}\zeta_F(2)\res_{s=1}\zeta_F(s) 			&\text{if }G=\GL(2),\\
D_F^{1/2}\zeta_F(2)						&\text{if }G=\SL(2). 
 \end{cases}
\end{equation}

\begin{lemma}\label{lem:arch:compact}
We have  
\[
\vol(\cpt_f^F)=
\begin{cases}
 D_F^{-3/2} 						&\text{if }G=\GL(2),\\
D_F^{-1}						&\text{if }G=\SL(2). 
 \end{cases}
\]
where $\vol(\cpt_f^F)$ denotes the measure of $\cpt_f^F$ as a subset of $G(\A_{F,f})$ with respect to the measure on $G(\A_{F,f})$ defined above.
\end{lemma}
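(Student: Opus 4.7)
The plan is to reduce everything to a purely local calculation and then invoke the conductor--discriminant formula. Since the measure on $G(\A_{F,f})$ is the restricted product of the local measures, and $\cpt_f^F=\prod_{v<\infty}\cpt_v^F$, we have
\[
\vol(\cpt_f^F)=\prod_{v<\infty}\vol_{G(F_v)}(\cpt_v^F),
\]
so it suffices to compute $\vol_{G(F_v)}(\cpt_v^F)$, where $\cpt_v^F=G(\OOO_{F_v})$, at each finite place $v$.

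At a fixed finite place $v$, I would apply the Iwasawa integration formula from Section~\ref{sec:measures} to $f=\One_{\cpt_v^F}$. Since $k\in\cpt_v^F$ automatically and $\cpt_v^F$ is a subgroup, the condition $umk\in\cpt_v^F$ is equivalent to $um\in\cpt_v^F$. Writing $u=\begin{pmatrix}1 & x\\ 0 & 1\end{pmatrix}$ and $m=\diag(a,b)\in T_0(F_v)$ (with $b=a^{-1}$ in the $\SL(2)$ case), a direct inspection of $um=\begin{pmatrix}a & xb\\ 0 & b\end{pmatrix}$ shows that $um\in\cpt_v^F$ iff $a,b\in\OOO_{F_v}^\times$ and $x\in\OOO_{F_v}$; in that range $\delta_0(m)=|a/b|_v=1$. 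Hence the triple integral factors as
\[
\vol_{G(F_v)}(\cpt_v^F)=\vol(\cpt_v^F)\cdot\vol(T_0(\OOO_{F_v}))\cdot\vol(U_0(\OOO_{F_v})),
\]
where the first factor equals $1$ by the chosen normalization on the maximal compact subgroup.

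Next, the measures from Section~\ref{sec:measures} give $\vol(\OOO_{F_v})=\vol(\OOO_{F_v}^\times)=\N(\partial_{F_v})^{-1/2}$. Since $U_0\cong\mathbb{G}_a$ is $1$-dimensional, and $T_0$ is $1$-dimensional for $\SL(2)$ but $2$-dimensional for $\GL(2)$, this yields
\[
\vol_{G(F_v)}(\cpt_v^F)=\begin{cases}\N(\partial_{F_v})^{-1} & \text{if } G=\SL(2),\\ \N(\partial_{F_v})^{-3/2} & \text{if } G=\GL(2).\end{cases}
\]
Taking the product over all finite places and using the identity $\prod_{v<\infty}\N(\partial_{F_v})=D_F$ (the absolute discriminant of $F$) then gives $\vol(\cpt_f^F)=D_F^{-1}$ or $D_F^{-3/2}$ respectively, which is the lemma. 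The whole argument is a bookkeeping exercise in measure normalizations; I do not expect any serious obstacle — the only mild subtlety is that the Iwasawa decomposition is not globally injective, but this is already absorbed into the integration formula taken as the definition of the measure on $G(F_v)$.
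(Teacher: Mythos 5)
Your proposal is correct and takes essentially the same approach as the paper: both apply the Iwasawa integration formula to the characteristic function of the maximal compact, observe that it reduces to the volumes of $T_0(\widehat{\OOO_F})$ and $U_0(\widehat{\OOO_F})$, and then use the measure normalization $\vol(\OOO_{F_v})=\vol(\OOO_{F_v}^\times)=\N(\partial_{F_v})^{-1/2}$ together with $\prod_{v<\infty}\N(\partial_{F_v})=D_F$. The only cosmetic difference is that you compute place by place and multiply while the paper writes the single adelic integral; also the identity $\prod_v\N(\partial_{F_v})=D_F$ is the standard relation between local differents and the global discriminant rather than the conductor--discriminant formula in the Artin sense, but the fact you use is the right one.
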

\begin{proof}
 Let $\chi:G(\A_{F,f})\longrightarrow\{0,1\}$ be the characteristic function of $\cpt_f^F$. Then
 \begin{multline*}
  \vol(\cpt_f^F) = \int_{G(\A_{F,f})} \chi(g)\, dg
  = \int_{\cpt_f^F}\int_{T_0(\A_{F,f})} \int_{U_0(\A_{F,f})} \chi(muk)\, du\, dm\, dk \\
  = \int_{T_0(\A_{F,f})} \int_{U_0(\A_{F,f})} \chi(muk)\, du\, dm.
 \end{multline*}
Now $\chi(mu)=1$ if and only if $m\in T(\A_{F,f})\cap \cpt_{f}^F = \prod_{v<\infty} T_0(\OOO_{F_v})$ and $u\in U_0(\A_{F,f})\cap\cpt_f^F= \prod_{v<\infty} U_0(\OOO_{F_v})$. Using the normalization of measures on $\OOO_{F_v}^\times$ and $\OOO_{F_v}$ we obtain the assertion.
\end{proof}

Setting $ \nu_F= \vol(G(F)\backslash G(\A_F)^1/\cpt_f^F)$  as in the introduction, we then get
\begin{equation}\label{eq:volume}
\nu_F= 
\begin{cases}
  D_F^{2}\zeta_F(2)\res_{s=1}\zeta_F(s) 			&\text{if }G=\GL(2),\\
D_F^{3/2}\zeta_F(2)						&\text{if }G=\SL(2). 
      \end{cases}
 \end{equation}

\subsection{Regulators and the class number formula}
We recall some facts about regulators of number fields and residues of Dedekind $\zeta$-functions. 
Let $F\in\F_{\signature}$, and let $\zeta_F(s)$ denote the Dedekind $\zeta$-function associated with $F$. Then $\zeta_F(s)$ has a simple pole at $s=1$ with residue
\[
\res_{s=1}\zeta_F(s)
=2^{r_1} (2\pi)^{r_2}\frac{h_F R_F}{w_F D_F^{1/2}}
\]
where
\begin{itemize}
\item $h_F$ is the class number of $F$,
\item $R_F$ is the regulator of $F$, and
\item $w_F$ is the number of roots of unity contained in $F$.
\end{itemize}
The rest of this section is devoted to stating bounds for the constant $\nu_F$ (defined in \eqref{eq:volume}) and the class number $h_F$, which we will need later.
For that, first note that $w_F$ can be bounded from above by the degree $d=[F:\Q]$.
By \cite{La70} the residue is bounded from above by
\begin{equation}\label{eq:upper:bound:residue}
\res_{s=1}\zeta_F(s)
\leq  (\log D_F)^{d-1}
\end{equation}
for every $F$ with $D_F\geq5$.

By \cite{Zi80} there exists an absolute constant $R_0>0$ such that for all number fields $F$ we have
\begin{equation}\label{eq:lower:bound:regulator}
R_F\geq R_0.
\end{equation}
Combining this with \eqref{eq:upper:bound:residue} we get an upper bound for the class number: For every $F\in\F_{\signature}$ we have
\begin{equation}\label{eq:upper:bound:class:number}
h_F\ll_{d} D_F^{1/2} (\log D_F)^{d-1}.
\end{equation}

Recall the definition of $\nu_F$ from \eqref{sec:measures}. Using above estimates, we obtain the following bounds for $\nu_F$: If $G=\GL(2)$, then
\[
 D_F^2\ll_d \nu_F \ll_d D_F^2 (\log D_F)^{d-1},
\]
and if $G=\SL(2)$, then
\[
 D_F^{3/2} \ll_d \nu_F \ll_d D_F^{3/2}.
\]

For later purposes we record the following consequences of above estimates:
\begin{lemma}\label{lem:quot:meas}
Let $G=\GL(2)$ or $\SL(2)$ and let $T_0\subseteq G$ be the diagonal torus as before.
We have 
 \[
  \frac{\vol(T_0(F)\backslash T_0(\A_F)^1)\vol(F\backslash \A_F)}{\vol(G(F)\backslash G(\A_F)^1)}
  \le \zeta(2d)^{-d} D_F^{-1/2} (\log D_F)^{d-1}
 \]
and
\[
   \frac{\vol(T_0(F)\backslash T_0(\A_F)^1)\vol(F\backslash \A_F)}{\vol(G(F)\backslash G(\A_F)^1)}
  \le \zeta(2d)^{-d} D_F^{-1} (\log D_F)^{d-1}
\]
for all $F\in\F_{\signature}$ with $D_F\ge5$. Here $\vol(H(F)\backslash H(\A_F))$ denotes the measure of the quotient  $H(F)\backslash H(\A_F)$ with respect to the measures on $H(\A_F)$  defined above for any of the groups $H$ considered above. (The measure on $H(F)$ is of course the counting measure.)
\end{lemma}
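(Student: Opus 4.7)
The plan is to reduce the ratio to an explicit expression in the standard number-theoretic invariants of $F$, and then apply Lang's upper bound on $\res_{s=1}\zeta_F(s)$ together with a lower bound on $\zeta_F(2)$ coming from the Euler product.

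First I would use $\vol(F\backslash\A_F) = 1$ and the identification $T_0 \cong \G_m^{\rk T_0}$ to write $T_0(F)\backslash T_0(\A_F)^1 \cong (F^\times\backslash\A_F^1)^{\rk T_0}$; the measure normalisations of Section~\ref{sec:measures} then give $\vol(T_0(F)\backslash T_0(\A_F)^1) = (\res_{s=1}\zeta_F(s))^{\rk T_0}$. Substituting the formula \eqref{measures} for $\vol(G(F)\backslash G(\A_F)^1)$, and noting that the additional factor of $\res_{s=1}\zeta_F(s)$ appearing in the $\GL_2$ denominator exactly cancels one of the two powers in the $\GL_2$ numerator, the ratio reduces in both cases to the common expression $\res_{s=1}\zeta_F(s)/(D_F^{1/2}\zeta_F(2))$.

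Next I would bound the numerator from above via Lang's inequality \eqref{eq:upper:bound:residue}, $\res_{s=1}\zeta_F(s)\le(\log D_F)^{d-1}$ for $D_F\ge 5$, and the denominator from below using the Euler product $\zeta_F(2) = \prod_\ppp(1-N\ppp^{-2})^{-1}$. Since $N\ppp = p^{f(\ppp)}$ with $f(\ppp)\le d$, each local factor satisfies $(1-N\ppp^{-2})^{-1}\ge(1-p^{-2d})^{-1}$, and a careful grouping of the primes above each rational $p$ yields $\zeta_F(2)\ge\zeta(2d)^d$. Combining these estimates produces the first bound.

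For the second (stronger) bound with $D_F^{-1}$ in place of $D_F^{-1/2}$, I would exploit the class number formula to rewrite $\res_{s=1}\zeta_F(s) = 2^{r_1}(2\pi)^{r_2}h_F R_F/(w_F D_F^{1/2})$, which already supplies an additional factor of $D_F^{-1/2}$, and then estimate $h_F$ and $R_F$ separately using \eqref{eq:upper:bound:class:number} and \eqref{eq:lower:bound:regulator} combined with the $\zeta_F(2)$ estimate above. The main obstacle will be the Euler-product lower bound $\zeta_F(2)\ge\zeta(2d)^d$: the naive factor-by-factor argument only yields $\zeta_F(2)\ge\zeta(2d)$, and extracting the extra factor of $\zeta(2d)^{d-1}$ requires a more delicate accounting of the decomposition behaviour of rational primes in $F$ (in particular using that the totally inert case is the only one producing a single local factor, while most primes contribute several factors).
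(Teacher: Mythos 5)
Your treatment of the first inequality is essentially the paper's argument: reduce the ratio to $\res_{s=1}\zeta_F(s)/(D_F^{1/2}\zeta_F(2))$ using \eqref{measures} and the product structure of $T_0$, then apply Lang's bound. That part is fine.

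The second inequality is where the proposal goes off the rails, and the reason is a typo in the lemma statement that you did not notice. As printed, the two displayed quotients have the \emph{identical} left-hand side, so the second (with $D_F^{-1}$) would simply supersede the first and the first would be vacuous. That alone should have been a red flag. Looking at the paper's own proof, the second display actually computes
\[
\frac{\vol(T_0(F)\backslash T_0(\A_F)^1)\,\vol(\widehat{\OOO_F})}{\vol(G(F)\backslash G(\A_F)^1)}
=\frac{\res_{s=1}\zeta_F(s)}{D_F\,\zeta_F(2)},
\]
i.e.\ the factor $\vol(F\backslash\A_F)=1$ must be replaced by $\vol(\widehat{\OOO_F})$. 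From the local normalization $\vol(\OOO_{F_v})=\N(\partial_{F_v})^{-1/2}$ one gets $\vol(\widehat{\OOO_F})=D_F^{-1/2}$, and this is where the extra $D_F^{-1/2}$ comes from; nothing deeper is going on. This reading is also what Lemma~\ref{lem:upper:bound:T} requires: the $D_F^{-1}$ variant is applied to $j^{F,T}_{\text{reg.split}}$, whose Proposition~\ref{prop:split:contribution} bound involves $\vol(\widehat{\OOO_F})$ rather than $\vol(F\backslash\A_F)$.

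Your idea of extracting an extra $D_F^{-1/2}$ from the class number formula cannot succeed. The formula $\res_{s=1}\zeta_F(s)=2^{r_1}(2\pi)^{r_2}h_FR_F/(w_FD_F^{1/2})$ is an identity, not an estimate; it simply trades $\res_{s=1}\zeta_F(s)$ for $h_FR_F/D_F^{1/2}$, and the upper bound \eqref{eq:upper:bound:class:number} for $h_F$ is itself derived from Lang's inequality, so the argument is circular. Worse, the conclusion you would be aiming for, $\res_{s=1}\zeta_F(s)\ll D_F^{-1/2+\eps}$, is false: the residue is bounded away from zero in a Siegel-type sense and certainly does not decay as $D_F^{-1/2}$.

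On the side issue: your suspicion about $\zeta_F(2)\ge\zeta(2d)^d$ is well founded. The inequality fails locally at inert primes, where the factor in $\zeta_F(2)$ is $(1-p^{-2d})^{-1}<(1-p^{-2d})^{-d}$, and numerically it already fails globally for $F=\Q(\sqrt5)$ (where $\zeta_F(2)\approx1.162$ while $\zeta(4)^2\approx1.171$). This is a small slip in the paper, but it is harmless: the trivial bound $\zeta_F(2)\ge1$ (or the one you correctly derive, $\zeta_F(2)\ge\zeta(2d)$) is all that is needed, since the lemma is only ever invoked up to a multiplicative constant depending on $d$.
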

\begin{proof}
 For $\GL(2)$ as well as $\SL(2)$ we have 
 \[
    \frac{\vol(T_0(F)\backslash T_0(\A_F)^1\vol(F\backslash \A_F)}{\vol(G(F)\backslash G(\A_F)^1)}
    =\frac{\res_{s=1}\zeta_F(s)}{D_F^{1/2} \zeta_F(2)}
 \]
 and
 \[
    \frac{\vol(T_0(F)\backslash T_0(\A_F)^1\vol(\widehat{\OOO_F})}{\vol(G(F)\backslash G(\A_F)^1)}
    = \frac{\res_{s=1}\zeta_F(s)}{D_F \zeta_F(2)}.
 \]
Since $\zeta_F(2)\ge\zeta(2d)^d$ and $\res_{s=1}\zeta_F(s)\le (\log D_F)^{d-1}$ for all $F\in \F_{\signature}$ with $D_F\ge5$ the assertions follow. 
\end{proof}

\subsection{Suitably regular truncation parameter}\label{sec:suff:regular}
Let $\aaa$ denote the Lie algebra of $T_0(\R)\cap G(\R)^1$ and $A_0^G={\rm exp}\, (\aaa) \subseteq G(\R^{\signature})^1$. We identify $\aaa$ with the set $\{(s, -s)\mid s\in\R\}$. This fixes a Euclidean distance on $\aaa\simeq \R$ via $(s, -s)\mapsto s$.  
Let $\alpha\in\aaa^*=\Hom_\R(\aaa, \R)\simeq \{(r,-r)\mid r\in\R\}$  denote the (unique) positive root of $G$ with respect to $(T_0,B)$, and let $\varpi\in \aaa^*$ be the corresponding coroot. More explicitly, $\alpha=(1,-1)$ and $\varpi=\frac{1}{2}\alpha$.  
We put $\aaa^+=\{X\in\aaa\mid \alpha(X)>0\} = \{(s,-s)\mid s>0\}$.

In the construction of Arthur's trace formula, a \emph{suitably regular} truncation parameter $T\in\aaa^+$ plays a crucial role \cite{Ar78}. The property of being ``suitably regular'' means that $T$ is sufficiently far away from the walls of $\aaa^+$ in a way that depends on the ground field and on  the support of the test function used in the trace formula. Our test function will be of the form $f=f_F=f_\infty\cdot \One_{\cpt_f^F}\in C_c^\infty(G(\A_F)^1)$, where $f_\infty\in C_c^\infty(G(\R^{\signature}))$ is fixed independently of the field $F\in \F_{\signature}$ and $\One_{\cpt_f^F}\in C_c^\infty(G(\A_{F,f}))$ denotes the characteristic function of $\cpt_f^F$. In this case the regularity condition can be made explicit in $F$ \cite[\S7]{coeff_est}: There exists $\rho>0$ depending only on the support of $f_\infty$ and the degree $d$ of $F$ over $\Q$ such that $T\in\aaa^+$ is suitably regular in Arthur's sense if
\begin{equation}\label{eq:suitably:regular}
\alpha(T)\geq \rho\max\left\{1,\log D_F\right\}.
\end{equation}
Since we always assume that the degree $d$ is at least $2$, we can replace this inequality by $\alpha(T)\geq \rho \log D_F$.
(Note that for $\GL(n)$ and $\SL(n)$, $n>2$, a similar assertion holds but $\alpha(T)$ on the left hand side of the inequality has to be replaced by the minimum of $\beta(T)$ with $\beta$ running over the positive roots.)

\section{Geometric limit multiplicity property}\label{sec:geom:limit}

Let $F\in\F_{\signature}$ and let $G=\SL(2)$ or $\GL(2)$ as an algebraic group over $F$. We denote by $\Jgeom^F$  the geometric side of Arthur's trace formula for $G$ over $F$ as in \cite{Ar05}. We first show the geometric limit property in the following form: 

\begin{proposition}\label{prop:geom:limit}

Let $\One_{\cpt_f^F}\in C_c^{\infty} (G(\A_{F,f}))$ the characteristic function of $\cpt_f^F$.
Then for every $f_{\infty}\in C_c^{\infty}(G(\R^{\signature})^1)$ we have
\[
\lim_{F\in\F_{\signature}}\left(\frac{\Jgeom^F(f_{\infty}\cdot \One_{\cpt_f^F})}{\vol(G(F)\backslash G(\A_F)^1)}- \sum_{z\in Z(F)}(f_{\infty}\cdot\One_{\cpt_f^F})(z)\right)=0,
\]
where $Z\subseteq G$ denotes the center of $G$. 
\end{proposition}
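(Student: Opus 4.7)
I would start from Arthur's coarse geometric expansion
\[
\Jgeom^F(f) = \sum_{\mathfrak{o}} J_{\mathfrak{o}}^F(f),
\]
where $f = f_\infty \cdot \One_{\cpt_f^F}$ and $\mathfrak{o}$ ranges over semisimple $F$-conjugacy classes meeting $\supp f$. Each central class $\mathfrak{o}=\{z\}$, $z\in Z(F)$, contributes the isolated summand $\vol(G(F)\backslash G(\A_F)^1)\, f(z)$, which is exactly the asserted main term. The proposition therefore reduces to showing that $\sum_{\mathfrak{o}\text{ non-central}} J_{\mathfrak{o}}^F(f) = o\!\left(\vol(G(F)\backslash G(\A_F)^1)\right)$ as $F$ varies in $\F_{\signature}$.

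\textbf{Counting the surviving classes.} Because $\One_{\cpt_f^F}$ is supported on matrices with $v$-integral entries at every finite place, any contributing $\gamma\in G(F)$ already lies in $\Mat_n(\OOO_F)$; the fixed support of $f_\infty$ then confines $\gamma$ to a compact subset of $\Mat_n(\R^{\signature})$ that depends only on $f_\infty$. Since $\OOO_F\subseteq\R^{\signature}$ is a lattice of covolume $\asymp D_F^{1/2}$ and the signature is fixed, Minkowski's theorem gives a bound of $O_{f_\infty}(1)$ on the number of such $\gamma$, uniformly in $F$. In particular, only $O_{f_\infty}(1)$ non-central $F$-conjugacy classes contribute to $\Jgeom^F(f)$, and for $D_F$ large only those $\gamma$ whose entries are essentially units of $\OOO_F$ survive.

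\textbf{Bounding the non-central terms.} I would split the remainder into \emph{(i)} regular semisimple (elliptic or hyperbolic) and \emph{(ii)} non-trivial unipotent contributions. For \emph{(i)}, each class contributes $\vol(T_\gamma(F)\backslash T_\gamma(\A_F)^1)\cdot O_\gamma(f)$, possibly weighted by $\alpha(T)$ when $T_\gamma$ is split. The centralizer $T_\gamma$ is either a rank-one split torus, whose volume is $\res_{s=1}\zeta_F(s)\ll_d (\log D_F)^{d-1}$, or $(\mathrm{Res}_{E/F}\G_m)^{(1)}$ for a quadratic $E/F$ with $D_E\ll_{f_\infty} D_F^2$, which by \eqref{eq:upper:bound:residue} and the class number formula has volume $\ll_d (\log D_F)^{2d-1}$; the finite-place orbital integrals of $\One_{\cpt_f^F}$ can be controlled by counting $\gamma$-stable $\OOO_F$-lattices, for which one can invoke \cite{coeff_est}. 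Since only $O_{f_\infty}(1)$ classes survive, the total regular semisimple contribution is bounded by a polynomial in $\log D_F$. For \emph{(ii)}, Arthur's formula for the non-trivial unipotent piece produces a weighted orbital integral whose $F$-dependence is governed by the truncation $\alpha(T)\asymp\log D_F$ from \S\ref{sec:suff:regular}, by the quotient measures in Lemma~\ref{lem:quot:meas}, and by the Section~2 bounds on $\res_{s=1}\zeta_F(s)$ and $\zeta_F(2)$; these combine to give a bound that is again polynomial in $\log D_F$. Dividing by $\vol(G(F)\backslash G(\A_F)^1)\asymp D_F^{1/2}\zeta_F(2)$ for $\SL(2)$ (times $\res_{s=1}\zeta_F(s)$ for $\GL(2)$) produces a ratio that tends to zero.

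\textbf{Main obstacle.} The most delicate piece is the non-trivial unipotent contribution: the logarithmic factor $\alpha(T)\asymp\log D_F$ is genuinely present, and it must be absorbed by the matching smallness of $\vol(\cpt_f^F)=D_F^{-1}$ (resp.\ $D_F^{-3/2}$) together with $\zeta_F(2)\ge\zeta(2d)^d$. Extracting an honest power-saving here, by combining Arthur's explicit unipotent formula with the effective local orbital-integral estimates of \cite{coeff_est}, is exactly what upgrades the qualitative limit of Proposition~\ref{prop:geom:limit} to the polynomial error rate needed for Theorem~\ref{thm:effective}.
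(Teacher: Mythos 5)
Your overall strategy (isolate the central contribution, decompose the rest into regular elliptic / regular split / nontrivial unipotent, and show each piece is $o(\vol)$ after dividing by $\vol(G(F)\backslash G(\A_F)^1)$) matches the paper in outline, but your starting point is genuinely different and that difference is where the gap is. The paper does not work directly with the coarse expansion $\Jgeom^F=\sum_\ooo J^F_\ooo$. Instead it introduces the truncated kernel integral $j^{F,T}_{G\smallsetminus Z}(f_F)$ from \eqref{eq:def:truncated:int:minus:center}, proves the explicit bound $j^{F,T}_{G\smallsetminus Z}(f_F)\ll D_F^{-1/2}(\log D_F)^{2d}\,\varpi(T)\,\vol(G(F)\backslash G(\A_F)^1)$ for suitably regular $T$ (Lemma~\ref{lem:upper:bound:T}, via Corollary~\ref{cor:reg:ell}, Proposition~\ref{prop:split:contribution}, Proposition~\ref{prop:unip:contr}), and then passes from $j^{F,T}_{G\smallsetminus Z}$ to the polynomial $J^{F,T}_{\mathrm{geom}\smallsetminus Z}$ by the explicit approximation Lemma~\ref{lem:approx} followed by a polynomial interpolation to evaluate at $T=0$. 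The paper even flags this in the remark after Lemma~\ref{lem:upper:bound:T}: for $\GL(2)$ one could try the route you propose via the explicit fine geometric expansion, but for $\SL(2)$ the truncated-kernel route is the tractable one.

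The concrete gap in your proposal is exactly the point you flag as the ``main obstacle.'' In the coarse expansion, the nontrivial unipotent class $\ooo$ contributes a weighted orbital integral $J^F_\ooo(f)$ whose weight factor depends on the truncation parameter and whose $F$-dependence is entangled in Arthur's construction; you offer no mechanism to bound it uniformly as $F$ runs over $\F_{\signature}$ beyond the heuristic that ``these combine to give a bound polynomial in $\log D_F$.'' The paper sidesteps this entirely: Proposition~\ref{prop:unip:contr} bounds the \emph{unweighted} truncated kernel integral $j^{F,T}_{\text{unip}\smallsetminus Z}$ by a direct computation (lattice-point count via Lemma~\ref{lem:number:lattice:pts} and an elementary estimate on the truncation domain), and the weight factor $\varpi(T)$ appears explicitly and is killed by the interpolation step because the error in Lemma~\ref{lem:approx} decays exponentially in $\alpha(T)$ while the main term grows only linearly.

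A secondary, smaller issue is your Minkowski argument. A first-minimum bound $\lambda_1(\OOO_F)\geq c_d$ does give $|\OOO_F\cap B|=O_{f_\infty,d}(1)$ uniformly in $F$, but the paper needs more than a count. Proposition~\ref{prop:finitely:many:classes} establishes that $\OOO_F\cap B\subseteq\Lambda\cap B$ for a \emph{single fixed} discrete set $\Lambda\subseteq\bar\Q$ independent of $F$; proving this requires Lemma~\ref{lem:succ:min:bound:below}, which uses the Ellenberg--Venkatesh lower bound $Q_E(a)\gg D_E^{2/d_E^2}$ for trace-zero projections, not just Minkowski. This stronger conclusion is what lets the paper treat the finite list of characteristic polynomials as a constant independent of $F$ throughout Sections~\ref{sec:geom:limit}--\ref{sec:unip:contr}. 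For $\SL(2)$ you additionally need Lemma~\ref{lem:classes} (via Kottwitz) to bound the number of $F$-conjugacy classes per stable class, another point your ``$O_{f_\infty}(1)$ classes'' skips over. So even the regular semisimple pieces of your sketch would need these inputs spelled out before they are genuinely complete.
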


In fact, we shall prove the following effective estimate:
\begin{proposition}\label{prop:upper:bound}
For every $f_{\infty}\in C_c^{\infty}(G(\R^{\signature})^1)$ and every $\eps>0$ there exists a constant $c>0$ such that for all $F\in\F_{\signature}$ we have
\begin{equation}\label{eq:upper:bound}
\left| \frac{\Jgeom^F(f_{\infty}\cdot \One_{\cpt_f^F} )}{ \vol(G(F)\backslash G(\A_F)^1)}-\sum_{z\in Z(F)}(f_{\infty}\cdot \One_{\cpt_f^F})(z) \right|
\le c D_F^{-\frac{1}{2}+\eps}.
\end{equation}
\end{proposition}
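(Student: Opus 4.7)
The plan is to apply Arthur's trace formula for $G$ over $F$ as in \cite{Ar05} and to decompose the geometric side into its standard pieces
\[
 \Jgeom^F(f_\infty\cdot\One_{\cpt_f^F})
 = J^F_{\mathrm{cent}} + J^F_{\mathrm{unip}} + J^F_{\mathrm{ell}} + J^F_{\mathrm{hyp}},
\]
coming respectively from the central, unipotent, elliptic regular semisimple, and hyperbolic regular semisimple classes.  The central term equals exactly $\vol(G(F)\backslash G(\A_F)^1)\cdot\sum_{z\in Z(F)}(f_\infty\cdot\One_{\cpt_f^F})(z)$, so the proposition reduces to bounding each of $J^F_{\mathrm{unip}}, J^F_{\mathrm{ell}}, J^F_{\mathrm{hyp}}$ by $c\cdot D_F^{-1/2+\eps}\vol(G(F)\backslash G(\A_F)^1)$ uniformly in $F\in\F_{\signature}$.

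\textbf{Finiteness of contributing classes.} A non-central $\gamma\in G(F)$ contributes only if some $G(\A_F)$-conjugate of $\gamma$ meets $\supp(f_\infty)\cdot\cpt_f^F$. This forces the non-leading coefficients of the characteristic polynomial of $\gamma$ to lie in $\OOO_F$ and to be uniformly bounded at every archimedean place in terms of $\supp(f_\infty)$ alone. Together with $\det\gamma\in\{\pm1\}$ for $\GL(2)$ or $\det\gamma=1$ for $\SL(2)$, Kronecker's theorem then forces the eigenvalues of $\gamma$ (which are algebraic integers) to be roots of unity of degree at most $2d$. Consequently, the set of characteristic polynomials of contributing elements is finite of cardinality bounded in terms of $d$ and $\supp(f_\infty)$ alone, uniformly in $F\in\F_{\signature}$.

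\textbf{Bounds on the three non-central contributions.} For each elliptic regular class $\{\gamma\}$, the contribution is $\vol(G_\gamma(F)\backslash G_\gamma(\A_F)^1)\cdot O_\gamma(f_\infty\cdot\One_{\cpt_f^F})$, where $G_\gamma$ is the one-dimensional $F$-torus attached to the \'etale quadratic algebra $E=F[\gamma]/F$. Because $E$ is generated over $F$ by a root of unity of bounded order, the relative discriminant $D_{E/F}$ is bounded in terms of $d$; applying Dirichlet's class-number formula for $E$ combined with \eqref{eq:upper:bound:residue}, \eqref{eq:lower:bound:regulator}, and \eqref{eq:upper:bound:class:number} gives $\vol(G_\gamma(F)\backslash G_\gamma(\A_F)^1)\ll_{d,\eps} D_F^{\eps}$, while the orbital integral factorizes into a uniformly bounded archimedean factor and non-archimedean factors equal to $1$ outside a bounded finite set $S=S(d)$ and $O_d(1)$ inside it. Hence $J^F_{\mathrm{ell}}\ll_{d,f_\infty,\eps} D_F^{\eps}$. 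For $J^F_{\mathrm{hyp}}$ and $J^F_{\mathrm{unip}}$ one uses that Arthur's weights are piecewise polynomial in $\alpha(T)$ of degree at most the split rank, which is $1$; by \eqref{eq:suitably:regular} one has $\alpha(T)\ll_{f_\infty,d}\log D_F$. The split torus volume entering $J^F_{\mathrm{hyp}}$ is controlled by Lemma \ref{lem:quot:meas}, giving a factor $D_F^{-1/2}(\log D_F)^{d-1}\vol(G(F)\backslash G(\A_F)^1)$; combined with uniform bounds on the weighted orbital integrals drawn from \cite{coeff_est}, this gives $J^F_{\mathrm{hyp}}\ll_{d,f_\infty,\eps} D_F^{-1/2+\eps}\vol(G(F)\backslash G(\A_F)^1)$. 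The unipotent contribution beyond the identity orbit, in its classical explicit form for $\SL(2)$ or $\GL(2)$, involves $\vol(U_0(F)\backslash U_0(\A_F))=1$, a polynomial in $\alpha(T)$, and a zeta-coefficient of size $O((\log D_F)^{d-1})$ coming from $\res_{s=1}\zeta_F(s)$, and is dominated by the same bound.

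\textbf{Main obstacle.} The delicate point is obtaining the explicit uniform bounds on the weighted orbital integrals in $J^F_{\mathrm{hyp}}$ and $J^F_{\mathrm{unip}}$ with precise control of their dependence on the truncation parameter $T$, which itself depends on $D_F$ through \eqref{eq:suitably:regular}. One must show that the archimedean and $p$-adic weighted orbital integrals of $f_\infty\cdot\One_{\cpt_f^F}$ grow at most polynomially in $\alpha(T)\ll\log D_F$, with implied constants depending only on $\supp(f_\infty)$ and $d$. This is accomplished either by invoking the estimates of \cite{coeff_est} or by direct explicit computation in the low-rank case at hand, and forms the technical heart of the proof.
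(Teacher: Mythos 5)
Your high-level decomposition (central, unipotent, elliptic, hyperbolic) matches the paper's split of $j_{G\smallsetminus Z}^{F,T}$ into regular elliptic, regular split, and non-central unipotent pieces, and your plan to control torus volumes via \eqref{eq:upper:bound:residue} and \eqref{eq:lower:bound:regulator} and split-torus ratios via Lemma~\ref{lem:quot:meas} is aligned with what the paper does. However, there are genuine gaps.

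\textbf{The truncation parameter.} The decisive technical step is missing. The geometric side $\Jgeom^F(f_F)$ is the value at $T=0$ of a degree-one polynomial $\Jgeom^{F,T}(f_F)$, while the explicit (truncated) expressions only converge and are only comparable to that polynomial when $T$ is \emph{suitably regular}, i.e.\ $\alpha(T)\geq\rho\log D_F$ by \eqref{eq:suitably:regular}. You write ``by \eqref{eq:suitably:regular} one has $\alpha(T)\ll\log D_F$'' --- this reverses the inequality, and more importantly it leaves unaddressed how a bound valid only for $\alpha(T)\gtrsim\log D_F$ yields a bound at $T=0$. The paper handles this by (i) introducing the auxiliary truncated kernel integrals $j^{F,T}$, $j^{F,T}_{G\smallsetminus Z}$ in \eqref{eq:def:truncated:integral}--\eqref{eq:def:truncated:int:minus:center}, (ii) proving Lemma~\ref{lem:approx} that $|J^{F,T}_{\text{geom}\smallsetminus Z}(f_F)-j^{F,T}_{G\smallsetminus Z}(f_F)|$ is exponentially small in $\alpha(T)$ with explicit dependence on $D_F$ (itself requiring \cite[Lemma 7.7]{coeff_est} and its adaptation), and then (iii) interpolating the degree-one polynomial from the range $\alpha(T)\asymp\log D_F$ down to $T=0$. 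Without steps (ii) and (iii), the $D_F^{-1/2+\eps}$ bound for the constant term $\Jgeom^F(f_F)$ does not follow from a bound on $j^{F,T}_{G\smallsetminus Z}(f_F)$ for large $T$. Your remark that ``this forms the technical heart of the proof'' is correct, but the proposal does not supply it, and the references cited (weighted orbital integral estimates from \cite{coeff_est}) are aimed at a different task than the approximation lemma the paper needs.

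\textbf{Finiteness of contributing classes.} The assertion that Kronecker's theorem forces the eigenvalues of contributing $\gamma$ to be roots of unity is incorrect. For $\GL(2)$ the determinant of a contributing element is a unit in $\OOO_F^\times$ (with $|\det\gamma|_\infty=1$ and $\det\gamma\in\widehat{\OOO_F}^\times$), not an element of $\{\pm1\}$, and for $\SL(2)$ an algebraic integer with bounded trace and determinant $1$ need not be a root of unity. The correct (and still elementary) version of the argument is that an algebraic integer of degree at most $2d$ over $\Q$ with all archimedean conjugates bounded lies in a finite set; but since $F$ varies, one must justify that the set of such coefficients (lying in varying rings $\OOO_F$) is \emph{uniformly} finite. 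That is exactly Proposition~\ref{prop:finitely:many:classes}, whose proof rests on the discriminant lower bound of Lemma~\ref{lem:succ:min:bound:below} (using \cite[Lemma 3.1]{ElVe06}) to show that all coefficients lie in a fixed discrete subset $\Lambda\subseteq\R^{\signature}$ independent of $F$. You would also need the analogue of Lemma~\ref{lem:classes} controlling, for $\SL(2)$, how many $\SL_2(F)$-conjugacy classes share a given characteristic polynomial, uniformly in $F$; this is not mentioned.

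\textbf{Summary.} The overall plan and the volume/regulator/class-number estimates you invoke are in the spirit of the paper, but the proposal omits the auxiliary truncated distributions, the approximation Lemma~\ref{lem:approx}, and the interpolation step extracting the bound at $T=0$, which is where the argument actually lives. The finiteness step as stated is false and must be repaired along the lines of Proposition~\ref{prop:finitely:many:classes}, and the count of $\SL_2(F)$-conjugacy classes per characteristic polynomial (Lemma~\ref{lem:classes}) is missing entirely.
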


The proof of these two propositions will occupy the next few sections. 

If $f_{\infty}\in C_c^\infty(G(\R^{\signature})^1)$ we write $f_F=f_{\infty}\cdot \One_{\cpt_f^F}$. Recall from \cite{Ar05} that $\Jgeom^F(f_F)$ is the value at $T=0$ of a polynomial $\Jgeom^{F,T}(f_F)$ of degree $1$ in $T\in\aaa$. 
Let $J_{\text{geom}\smallsetminus Z}^{F,T}(f_F)$ denote $J_{\text{geom}}^{F,T}(f_F)$ with the central contribution removed, that is,
\[
 J_{\text{geom}\smallsetminus Z}^{F,T}(f_F)
 =J_{\text{geom}}^{F,T}(f_F)- \sum_{z\in Z(F)}\vol(G(F)\backslash G(\A_F)^1) f_F(z).
\]
Let $F(\cdot,T): G(\A_F)^1\longrightarrow\C$ be the truncation function as defined in \cite[\S 6]{Ar78}.  More precisely, in our situation $F(\cdot, T)$ equals the characteristic function of all $g\in G(\A_F)^1$ with  
\[
 \varpi(H_0(\delta g)-T) \leq 0
\]
for every $\delta\in G(F)$. Here $H_0:G(\A_F)^1\longrightarrow\aaa$ is the Iwasawa projection, namely if $g\in G(\A_F)^1$ with Iwasawa decomposition $g=tuk\in T_0(\A_F)U_0(\A_F)\cpt^F$, $t=\diag(t_1, t_2)$, then $H_0(g)= (\log|t_1|_{\A_F}, \log|t_2|_{\A_F})$.

We introduce auxiliary distributions
\begin{equation}\label{eq:def:truncated:integral}
 j^{F,T}(f_F)=\int_{G(F)\backslash G(\A_F)^1} F(x,T)\sum_{\gamma\in G(F)} f_F(x^{-1}\gamma x)\,dx,
\end{equation}
and
\begin{equation}\label{eq:def:truncated:int:minus:center}
 j^{F,T}_{G\smallsetminus Z}(f_F)=\int_{G(F)\backslash G(\A_F)^1} F(x,T)\sum_{\gamma\in G(F)\smallsetminus Z(F)} f_F(x^{-1}\gamma x)\,dx
\end{equation}
for $T$ suitably regular. 
The distribution $\Jgeom^{F,T}(f_F)$ (resp. $J_{\text{geom}\smallsetminus Z}^{F,T}(f_F)$) can be approximated by $j^{F,T}(f_F)$ (resp. $j_{G\smallsetminus Z}^{F,T}(f_F)$) exponentially good in $T$ \cite[Theorem~2]{Ar79}.
We need to make this approximation explicit with respect to the field $F$, see Lemma~\ref{lem:approx} below.

The main step in proving the geometric limit multiplicity is the following:
\begin{lemma}\label{lem:upper:bound:T}
Let $\rho$ be as in \eqref{eq:suitably:regular}. 
For every $f_{\infty}\in C_c^{\infty}(G(\R^{\signature})^1)$ there exists $c>0$ such that for all $F\in \F_{\signature}$ we have
\begin{equation}\label{eq:upper:bound:T}
\frac{\left|j^{F,T}_{G\smallsetminus Z}(f_F)\right|}{\vol(G(F)\backslash G(\A_F)^1)}
\le c D_F^{-\frac{1}{2}} (\log D_F)^{2d} \varpi(T)
\end{equation}
for all $T\in\aaa$ with $\alpha(T)\geq \rho\log D_F$.
\end{lemma}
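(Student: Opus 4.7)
The strategy is to expand the sum over $\gamma\in G(F)\smallsetminus Z(F)$ in \eqref{eq:def:truncated:int:minus:center} according to $G(F)$-conjugacy classes and to bound each piece separately. For $G=\SL(2)$ or $\GL(2)$, a non-central $\gamma\in G(F)$ is either non-trivially unipotent, hyperbolic regular semisimple (centralizer conjugate to the split torus $T_0$), or elliptic regular semisimple (centralizer an anisotropic torus $T_E$ attached to a quadratic extension $E/F$). Unfolding against the centralizer yields
\[
j^{F,T}_{G\smallsetminus Z}(f_F)=\sum_{\{\gamma\}}\int_{G_\gamma(F)\backslash G(\A_F)^1}F(x,T)\,f_F(x^{-1}\gamma x)\,dx,
\]
and it suffices to bound the three resulting partial sums.

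\textbf{Unipotent and elliptic pieces.} For a non-trivial unipotent class with representative $u\in U_0(F)$, I would use the Iwasawa decomposition $x=ntk$ with $n\in U_0(\A_F)$, $t\in T_0(\A_F)^1$, $k\in \cpt^F$. Since $n$ commutes with $u$, the $n$-integral produces $\vol(F\backslash\A_F)=1$; the truncation $F(x,T)$ restricts the diagonal variable to a region of length $O(\varpi(T))$; and the remaining integral is a bounded orbital integral depending only on $f_\infty$. The elliptic classes contribute a term whose $T$-dependence is trivial (since $T_E(F)\backslash T_E(\A_F)^1$ is compact) and whose number is controlled by counting quadratic extensions $E/F$ together with classes in $T_E(F)\backslash T_E(\A_F)^1/(T_E(\A_{F,f})\cap\cpt_f^F)$, which uses \eqref{eq:upper:bound:class:number} applied to $E$ (whose discriminants are controlled by the archimedean support of $f_\infty$ and by the finite-place integrality coming from $\cpt_f^F$). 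Both contributions are easily absorbed into the right-hand side of \eqref{eq:upper:bound:T} after dividing by $\vol(G(F)\backslash G(\A_F)^1)$ and applying Lemma~\ref{lem:quot:meas}.

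\textbf{Hyperbolic piece.} This is the main term. Unfolding pulls out the factor $\vol(T_0(F)\backslash T_0(\A_F)^1)=\res_{s=1}\zeta_F(s)$. The condition $\One_{\cpt_f^F}(x^{-1}\gamma x)\neq 0$ combined with $\gamma\in T_0(F)$ forces the diagonal entries of $\gamma$ to lie in $\OOO_F^\times$, and the archimedean support of $f_\infty$ restricts them further to a compact region of $F_\infty^\times$. By Zimmert's bound \eqref{eq:lower:bound:regulator} the number of such units is bounded independently of $F$, while the finite-place orbital integrals are bounded uniformly via the product formula and local orbital integral computations at unramified places. The truncation $F(x,T)$ then restricts the $T_0$-direction inside $T_0(\A_F)^1\backslash G(\A_F)^1$ to a region of length $O(\varpi(T))$, producing the linear-in-$\varpi(T)$ dependence. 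Combining with \eqref{eq:upper:bound:residue} and Lemma~\ref{lem:quot:meas} yields the claimed bound, with the extra powers of $\log D_F$ coming from the residue estimate for $\zeta_F(s)$ at $s=1$ together with the auxiliary counting.

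\textbf{Main obstacle.} The delicate step is the hyperbolic contribution: Arthur's truncation function $F(\cdot,T)$ depends on $F$ through the Iwasawa decomposition, and to extract the linear $\varpi(T)$ behaviour uniformly in $F$ one must replace $F(\cdot,T)$ by an explicit cut-off on the split $T_0$-direction with an error controlled uniformly. Keeping the $\varpi(T)$ dependence linear (rather than polynomial of higher degree) is crucial, as is the uniform control on the number of rational hyperbolic conjugacy classes supported in $\cpt_f^F$, which ultimately rests on the regulator lower bound \eqref{eq:lower:bound:regulator} and on uniform local orbital integral bounds for $\One_{G(\OOO_{F_v})}$ at regular semisimple elements.
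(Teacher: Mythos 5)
Your decomposition into unipotent, split-regular (hyperbolic), and elliptic pieces is the same as the paper's, and the elliptic and hyperbolic sketches are in the right spirit. However, there are two substantial gaps.

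\textbf{The unipotent piece is where your argument breaks down.} You fix a single unipotent representative $u$ and claim the $U_0$-integral factors out as $\vol(F\backslash\A_F)=1$, leaving a ``bounded orbital integral.'' This is not what happens. After unfolding the unipotent contribution the paper arrives at
\[
 \sum_{z} \int_{T_0(F)A_G\backslash T_0(\A_F)}\delta_0(t)^{-1}
 \sum_{x\in F^{\times}} f_F(z t^{-1}u(x)t)
\int_{U_0(F)\backslash U_0(\A_F)} F(vt,T) \,dv\,dt,
\]
and the inner $v$-integral depends on $t$ and $T$ (it is not a constant), while the crucial sum is over $x\in F^{\times}$, not over a single class. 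For $\SL(2)$ there are infinitely many non-trivial unipotent $G(F)$-conjugacy classes (indexed by $F^{\times}/(F^{\times})^2$), so ``a single representative'' does not suffice; and once the $x$-sum is present, the number of contributing $x$ grows with the diagonal variable $t(a)$, which in turn is only cut off by the truncation. This entanglement is what the paper resolves with the lattice-point count (Lemmas~\ref{lem:first:minimum} and~\ref{lem:number:lattice:pts}), which shows $\sum_{x\in F^\times}f_F(t(b)^{-1}u(x)t(b))$ vanishes for $|b|_{\A_F}^{\beta_G}$ below a threshold and is $\ll |b|_{\A_F}^{\beta_G}$ otherwise, uniformly in $F$. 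Without this you cannot get the clean linear $\varpi(T)$ behaviour uniformly in $F$.

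\textbf{You do not establish the uniform finiteness of contributing semisimple parts.} Before anything else the paper proves Proposition~\ref{prop:finitely:many:classes} (via the successive-minima lower bound Lemma~\ref{lem:succ:min:bound:below}): there is a \emph{fixed finite set} $\Sigma_0$ of characteristic polynomials, independent of $F$, such that only classes with characteristic polynomial in $\Sigma_0$ contribute. This is what makes all the constants in Corollary~\ref{cor:reg:ell}, Proposition~\ref{prop:split:contribution} and Proposition~\ref{prop:unip:contr} uniform in $F$, and in particular controls the discriminants $D_{F(\gamma)}$ up to a constant multiple of $D_F$ in the elliptic case. You try to obtain uniformity in the hyperbolic case via Zimmert's regulator bound to count units in a compact region; this is a plausible alternative for that piece, but it does not replace the role of Proposition~\ref{prop:finitely:many:classes} in the elliptic case (where one also needs Lemma~\ref{lem:classes} to bound the number of $\SL_2(F)$-classes with a given characteristic polynomial) or in showing that only boundedly many central $z$ contribute. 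Finally, a smaller misdirection: the hyperbolic piece is actually the \emph{smallest} of the three (it contributes $O(D_F^{-1}(\log D_F)^{d-1}\varpi(T))$ against $O(D_F^{-1/2}(\log D_F)^{2d})$ from the elliptic part and $O(D_F^{-1/2}(\log D_F)^{d-1}\varpi(T))$ from the unipotent part), so it is not where the work is.
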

Proposition \ref{prop:upper:bound} will then follow from this lemma by an interpolation argument for polynomials. We shall prove this lemma and the previous two propositions in Section~\ref{sec:proof}. Sections~\ref{sec:contr:elements}--\ref{sec:unip:contr} contain auxiliary results for the proof of Lemma \ref{lem:upper:bound:T}. The arguments are in general the same for $\GL(2)$ and $\SL(2)$ so that we treat both cases at once unless stated otherwise.

\begin{rem}
 For $\GL(2)$ a more direct approach via the explicit expansion of the geometric side of the trace formula could be used (see, e.g., \cite{GeJa79}). However, for $\SL(2)$ it seems easier to use the distribution   $j^{F,T}_{G\smallsetminus Z}(f_F)$. The latter approach seems also more suitable for generalizations to higher rank, though this has not been successful so far.
\end{rem}

\section{Auxiliary results on contributing elements}
In this section we allow $G=\GL(n)$ or $\SL(n)$ with $n\ge2$ arbitrary until further notice. 
\subsection{Test functions}\label{sec:test:fcts}

We fix $f_{\infty}\in C_c^{\infty}(G(\R^{\signature})^1)$. For the proof of the geometric limit multiplicity property we may assume that $f_{\infty}\geq0$ whenever convenient. We further assume without loss of generality that $f_{\infty}$ is conjugation invariant by every element in $\cpt_{\infty}$. Let $R\geq1$ be such that the support of $f_{\infty}$ is contained in
\[
B_R^{\signature}= \{g=(g_{ij})_{i,j=1,\ldots,n}\in \Mat_{n\times n}(\R^{\signature})\mid \forall ~i,j\in\{1,\ldots,n\}:~\|g_{ij}-\delta_{ij}\|_{\signature}\leq R\},
\]
where $\delta_{ij}$ denotes the Kronecker delta in the ring $\R^{\signature}$, i.e., $\delta_{ii}=(1,\ldots,1)\in\R^{r_1}\oplus\C^{r_2}$ and  $\delta_{ij}=(0,\ldots, 0)$ for $i\neq j$. Further, for $v=(v_1, \ldots, v_{r_1},v_{r_1+1}, \ldots, v_{r_1+r_2})\in\R^{\signature}$,
\[
\|v\|_{\signature}=\left(\sum_{i=1}^{r_1} v_i^2+ 2\sum_{i=1}^{r_2} v_{r_1+i} \overline{v_{r_1+i}}\right)^{\frac{1}{2}}
 \]
with $\overline{v_{r_1+i}}$ the complex conjugate of $v_{r_1+i}$.

As before, we set $f_F=f_{\infty}\cdot\One_{\cpt_f^F}\in C_c^{\infty}(G(\A_F)^1)$, $F\in\F_{\signature}$.

\subsection{Geometric equivalence classes}

Recall the definition of the geometric equivalence classes $\ooo\subseteq G(F)$ from \cite[\S 10]{Ar05}: Two elements $\gamma_1, \gamma_2\in G(F)$ lie in the same class $\ooo$ if and only if the two semisimple parts $\gamma_{1,s}$, $\gamma_{2,s}$ of $\gamma_1$ and $\gamma_2$ in their Jordan decomposition are conjugate in $G(F)$.

In our case we can formulate this in terms of characteristic polynomials of semisimple conjugacy classes: There is a canonical map from the set of equivalence classes $\OOO^{G(F)}$ in $G(F)$ to the set of
\begin{itemize}
\item monic polynomials of degree $n$ with coefficients in $F$ and non-zero constant term if $G=\GL(n)$, 
\item monic polynomials of degree $n$ with coefficients in $F$ and constant term equal to $(-1)^n$ if $G=\SL(n)$,
\end{itemize}
by mapping the semisimple conjugacy class attached to $\ooo$ to its characteristic polynomial $\chi_\ooo$.

This map is a bijection if $G=\GL(n)$. 
If $G=\SL(2)$, the situation is more complicated. If the polynomial is split over $F$, that is, it is of the form $(X-\xi)(X-\xi^{-1})$ for some $\xi\in F^{\times}$, then there are two (resp.\ one) corresponding classes $\ooo\in\OOO^{\SL_2(F)}$ if $\xi\neq\pm1$ (resp.\ $\xi=\pm1$). If the polynomial is irreducible over $F$, there might be more classes but their number can be bounded, see Lemma~\ref{lem:classes} below.

\begin{proposition}\label{prop:finitely:many:classes}
Let $f_\infty\in C_c^\infty(G(\R^{\signature}))$. 
There exist finitely many polynomials $\chi_1(X), \ldots, \chi_s(X)\in\bar{\Q}[X]$ (with $\bar\Q$ some fixed algebraic closure of $\Q$)
such that for all $F\in\F_{\signature}$ we have
\begin{equation}\label{eq:fin:conj:classes}
j_{G\smallsetminus Z}^{F,T}(f_F)
=\int_{G(F)\backslash G(\A_F)^1} F(x,T)\sum_{\gamma\in \Sigma(F)\smallsetminus Z(F)} f_F(x^{-1}\gamma x)\,dx
\end{equation}
for all $T\in\aaa^+$ with $\alpha(T)\geq \rho\log D_F$,
where
\begin{equation}\label{eq:def:sigma}
 \Sigma(F)
 = \bigsqcup_{\substack{\ooo\in \OOO^{G(F)}: \\ \chi_{\ooo}\in\{\chi_1, \ldots, \chi_s\}}}\ooo
\end{equation}
is the union over all
geometric equivalence classes in $G(F)$ whose characteristic polynomial equals one of $\chi_1, \ldots, \chi_s$. The set $\{\chi_1,\ldots,\chi_s\}$ depends on the support of the function $f_{\infty}$.
\end{proposition}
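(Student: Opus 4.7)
The proof rests on the observation that only elements $\gamma \in G(F)$ whose characteristic polynomial coefficients are algebraic integers with uniformly bounded archimedean embeddings can contribute to the integrand, and that such polynomials form a finite set in $\bar{\Q}[X]$ by a Northcott-type argument.

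First I would identify the support of the integrand of $j^{F,T}_{G\smallsetminus Z}(f_F)$. If $\gamma \in G(F)\smallsetminus Z(F)$ contributes non-trivially, then there exists $x\in G(\A_F)^1$ with $F(x,T)\neq 0$ and $f_F(x^{-1}\gamma x)\neq 0$, so in particular $x^{-1}\gamma x \in \supp(f_\infty)\cdot \cpt_f^F$. The coefficients $a_0(\gamma),\ldots,a_{n-1}(\gamma)$ of $\chi_\gamma$ are conjugation-invariant, hence equal the corresponding coefficients computed at $x^{-1}\gamma x$ in every local completion. From the non-archimedean constraint $x^{-1}\gamma x \in G(\OOO_{F_v})$ for every finite $v$, we obtain $a_i(\gamma) \in \bigcap_{v<\infty} \OOO_{F_v} = \OOO_F$, so each $a_i(\gamma)$ is an algebraic integer.

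Second I would exploit the archimedean constraint. At each archimedean place $v\mid\infty$, the matrix entries of $x^{-1}\gamma x$ satisfy $\|(x^{-1}\gamma x)_{ij}-\delta_{ij}\|_{\signature}\leq R$, which translates (via the fixed identification $F_\infty\simeq \R^{\signature}$) into $|(x^{-1}\gamma x)_{ij}|_v \leq C(R)$ for a constant depending only on $R$ and on $n$. Since each $a_i$ is a fixed polynomial expression in the matrix entries, we get $|a_i(\gamma)|_v \leq C'(R,n)$ at every archimedean $v$. The $r_1+2r_2 = d$ embeddings $F\hookrightarrow\C$ are precisely those coming from the archimedean places of $F$ (real places contributing one, complex places contributing two conjugate embeddings), so every archimedean embedding of $a_i(\gamma)\in F\subset\bar{\Q}$ is bounded by $C'(R,n)$, with a bound uniform in $F\in\F_{\signature}$.

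The final step is the Northcott-type conclusion: $a_i(\gamma)$ is an algebraic integer of degree at most $d$ over $\Q$ all of whose Galois conjugates are bounded by $C'(R,n)$; hence the coefficients of its minimal polynomial over $\Z$ are bounded integers, and there are only finitely many such elements in $\bar{\Q}$. Taking the product of these finite sets over $i=0,\ldots,n-1$ produces a finite list $\{\chi_1,\ldots,\chi_s\}\subset \bar{\Q}[X]$ of possible characteristic polynomials, independent of $F$. With $\Sigma(F)$ defined as in \eqref{eq:def:sigma}, every $\gamma$ contributing to $j^{F,T}_{G\smallsetminus Z}(f_F)$ lies in $\Sigma(F)\smallsetminus Z(F)$, and conversely elements of $\Sigma(F)\smallsetminus Z(F)$ that do not contribute produce a zero summand, yielding \eqref{eq:fin:conj:classes}. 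The only mildly subtle point is ensuring the bounds $C(R),C'(R,n)$ are visibly uniform in $F$; this is automatic because $B_R^{\signature}$ and $\cpt_\infty$ are defined independently of $F$ via the fixed identification $F_\infty\simeq \R^{\signature}$.
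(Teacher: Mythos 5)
Your proof is correct, but it takes a genuinely different route from the paper. You invoke a direct Northcott-type finiteness argument: the coefficients $a_i(\gamma)$ are algebraic integers of degree dividing $d$ (since they lie in $F$) whose Galois conjugates are uniformly bounded (because the $d$ embeddings $F\hookrightarrow\C$ are exactly those carried by the archimedean places via the fixed identification $F_\infty\simeq\R^{\signature}$), and there are only finitely many algebraic integers of bounded degree with bounded conjugates. The paper instead constructs an explicit discrete set $\Lambda\subseteq\R^{\signature}$ — a union $\bigcup_{E\in\EEE_{D_0}}\OOO_E$ of rings of integers of finitely many small-discriminant subfields — and proves that $\OOO_F\cap B_{NR^n}^{\signature}\subseteq\Lambda$ for every $F\in\F_{\signature}$; this relies on Lemma~\ref{lem:succ:min:bound:below}, which in turn uses the lower bound of \cite[Lemma~3.1]{ElVe06} for a Minkowski-type quadratic form on the trace-zero part of $\OOO_E$. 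Both approaches reduce to the same two constraints (integrality from $\cpt_f^F$, archimedean boundedness from $\supp f_\infty$) before diverging. Your argument is shorter and more elementary, appealing only to the classical finiteness of algebraic integers of bounded degree and house; the paper's argument is more constructive (it exhibits the discrete ambient set $\Lambda$ explicitly) and is phrased in a way that meshes with the discriminant-quantified lemma it has already set up. For the statement of the proposition as given, your route is fully sufficient, including the uniformity-in-$F$ remark at the end, which correctly locates where the independence from $F$ enters.
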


\begin{rem}
By our choice of test function, namely the non-archimedean part $\One_{\cpt_f^F}$, only elements in $\ooo\in\OOO^{G(F)}$ for which the coefficients of $\chi_{\ooo}(X)$ are algebraic integers can contribute non-trivially to the sum-integral in \eqref{eq:fin:conj:classes}. Hence we are going to assume that every polynomial $\chi_j(X)$, $j=1,\ldots,s$, has coefficients which are algebraic integers. 
\end{rem}

Before we prove Proposition~\ref{prop:finitely:many:classes} we need the following lemma:
\begin{lemma}\label{lem:succ:min:bound:below}
There exists a constant $c>0$ depending only on $d$ such that for every number field $K\neq \Q$ of degree $d_K\leq d$ and every $x\in \OOO_K$ with $K=\Q(x)$ we have
\[
 \|x\|_{\signature^K} \geq c \max_{\Q\subsetneq E\subseteq K} D_E^{4/d^3},
\]
where the maximum is taken over all primitive number fields $\neq\Q$ contained in $K$ and $\signature^K$ denotes the archimedean signature of $K$. (We recall that a number field is primitive if it does not contain any non-trivial subfield.)
\end{lemma}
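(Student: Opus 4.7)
The plan is to combine the classical identity relating $D_K$ to the discriminant of the minimal polynomial of $x$ with the standard Vandermonde estimate for the latter, and then to use multiplicativity of discriminants in towers to bring in the subfield $E$. The bound $\|x\|_{\signature^K}\ge c D_E^{4/d^3}$ will come out essentially automatically from a short calculation; the exponent $4/d^3$ should appear as a uniform lower bound on a quantity of the form $1/([E:\Q](d_K-1))$.

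More concretely, I would let $f(X)\in\Z[X]$ be the minimal polynomial of $x$ over $\Q$. Since $K=\Q(x)$, $f$ has degree $d_K$ and $\Z[x]\subseteq\OOO_K$ is an order, so $\discr(f)=[\OOO_K:\Z[x]]^2 D_K$, which gives $|\discr(f)|\ge D_K$. The roots $x_1,\dots,x_{d_K}$ of $f$ are precisely the images of $x$ under the complex embeddings of $K$, each bounded by $\|x\|_{\signature^K}$ by the very definition of the latter (which, up to trivial factors from the complex places, is the $\ell^2$-norm of these images). The Vandermonde identity then yields
\[
|\discr(f)|=\prod_{i<j}|x_i-x_j|^2 \le (2\|x\|_{\signature^K})^{d_K(d_K-1)},
\]
and combining the two inequalities gives $D_K\le (2\|x\|_{\signature^K})^{d_K(d_K-1)}$.

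Next, for any intermediate field $\Q\subsetneq E\subseteq K$ the discriminant tower formula $D_K=N_{E/\Q}(\mathfrak d_{K/E})\,D_E^{[K:E]}$, together with $N_{E/\Q}(\mathfrak d_{K/E})\ge 1$, yields $D_K\ge D_E^{[K:E]}$. Substituting,
\[
\|x\|_{\signature^K}\ge \tfrac12\, D_E^{[K:E]/(d_K(d_K-1))} = \tfrac12\, D_E^{1/([E:\Q](d_K-1))}.
\]
Finally, since $[E:\Q]\le d$ and $d_K-1\le d-1$, we have $[E:\Q](d_K-1)\le d(d-1)\le d^3/4$, the last inequality being equivalent to $(d-2)^2\ge 0$; hence $1/([E:\Q](d_K-1))\ge 4/d^3$. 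Because $D_E\ge 1$ (in fact $D_E\ge 3$ for $E\neq\Q$ by Minkowski), this gives $\|x\|_{\signature^K}\ge \tfrac12 D_E^{4/d^3}$, and taking the maximum over the primitive subfields $\Q\subsetneq E\subseteq K$ finishes the proof with $c=1/2$. The argument is essentially a direct computation; the only step that requires any care is the elementary exponent comparison $(d-2)^2\ge 0$, and I do not anticipate any genuine obstacle.
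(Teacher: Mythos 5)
Your proof is correct, and it takes a genuinely different route from the paper's. The paper works with the characteristic polynomial $\chi$ of $x$ over the subfield $E$ (of degree $m=[K:E]$), observes that some coefficient $s(x_1,\dots,x_m)$ lies in $\OOO_E\smallsetminus\Z$, bounds that coefficient in terms of $\|x\|_{\signature^K}$, and then invokes the Ellenberg--Venkatesh quadratic-form lower bound (\cite[Lemma~3.1]{ElVe06}) for the trace-zero part of $\OOO_E$ to bring in a power of $D_E$. You instead work with the minimal polynomial over $\Q$, where everything reduces to two classical facts: the index formula $|\discr(f)|=[\OOO_K:\Z[x]]^2 D_K\ge D_K$ and the Vandermonde estimate $|\discr(f)|\le(2\|x\|_{\signature^K})^{d_K(d_K-1)}$ (using $|\sigma_i(x)|\le\|x\|_{\signature^K}$, which follows directly from the definition of $\|\cdot\|_{\signature^K}$), after which the discriminant tower inequality $D_K\ge D_E^{[K:E]}$ brings in the subfield. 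Your exponent bookkeeping $1/(d_E(d_K-1))\ge 1/(d(d-1))\ge 4/d^3$ (the latter from $(d-2)^2\ge0$) gives exactly the exponent $4/d^3$ stated in the lemma, with the explicit constant $c=1/2$. Your argument is strictly more elementary — it avoids Ellenberg--Venkatesh entirely and produces an explicit constant independent of $d$ — whereas the paper's approach tracks the archimedean size of a single deviant coefficient of $\chi$ rather than the whole discriminant, which is a more localized (but here unnecessary) measurement; since both end at the same exponent $4/d^3$, there is no loss in your approach for the application at hand.
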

\begin{proof}
Let $K$ be as in the lemma and $x\in \OOO_K$ an element generating $K$ over $\Q$. Let $E\subseteq K$, $E\neq\Q$, be any primitive subfield, and let $m=[K:E]=d_K/d_E$. Then $K=E(x)$ and the characteristic polynomial $\chi(X)$ of $x$ over $E$ has degree $m$. Let $x_1, \ldots, x_m\in \OOO_{\bar K}$ be the roots of $\chi$ in some algebraic closure $\bar K$ of $K$. Since $x$ generates $K$ over $\Q$ and $E\neq\Q$, one of the coefficients of $\chi$ has to be an element in $\OOO_E\minus\Z$. Let $s$ denote the elementary symmetric polynomial in the roots of $\chi$ which corresponds to this coefficient so that
\[
s(x_1, \ldots, x_m)\in \OOO_E\minus\Z.
\]
Let $1\leq k\leq m$ be the degree of $s$. Then by an elementary calculation one gets
\[
\|s(x_1, \ldots, x_m)\|_{\signature^E}^2
\leq c_1 \|x\|_{\signature^K}^k
\]
for $c_1>0$ some constant depending only on $d_K$, and also
\[
\|\frac{1}{d_E}\tr_{E/\Q} s(x_1, \ldots, x_m)\|_{\signature^E}^2
\leq c_2 \|s(x_1, \ldots, x_m)\|_{\signature^E}^2
\leq c_1c_2 \|x\|_{\signature^K}^k
\]
for $c_2>0$ another constant depending only on $d_K$.

Let $\OOO_E^0=\{a\in\OOO_E\mid \tr_{E/\Q}a=0\}$, and let $\signature^E$ be the signature of $E$. Let $Q_E$ denote the quadratic form $Q_E(x)=\|x-\frac{1}{d_E}\tr_{E/\Q} x\|_{\signature^E}^2$.
Then for some $c_3>0$ depending only on $d$ we have
\[
\|x\|_{\signature^K}^2
\geq c_3 Q_E(s(x_1, \ldots, x_m))^{4/k}
\]
and $s(x_1, \ldots, x_m)\not\in\Z$.

 By \cite[Lemma 3.1]{ElVe06} there exists $c_4>0$ depending only on $d_E$ such 
 \[
 Q_E(a)\geq c_4 D_E^{2/(d_E(d_E-1))} \geq c_5 D_E^{2/d_E^2}
 \]
 for every $a\in\OOO_E^0\smallsetminus\{0\}$. Now if $a\in\OOO_E\smallsetminus\Z$ is arbitrary, $d_E (a-\frac{1}{d_E}\tr_{E/\Q}a)\in\OOO_E^0\minus\{0\}$ so that
for $x$ as above
\begin{align*}
\|x\|_{\signature^K}
\geq c_3^{-1} Q_E(s(x_1, \ldots, x_m))^{2/k}
& = c_3^{-1} d_E^{-4/k} Q_E(d_Es(x_1, \ldots, x_m))^{2/k} \\
& \geq c_6 D_E^{\frac{4}{d_E^2 k}}
\geq c_6 D_E^{\frac{4}{d^3}}
\end{align*}
 for $c_6>0$ some constant depending only on $d$ as asserted.
\end{proof}

\begin{proof}[Proof of Proposition \ref{prop:finitely:many:classes}]
Let $\Xi_F:G(\A_F)\longrightarrow \A_F^n$ be the map associating with  $\gamma\in G(\A_F)^1$, the tuple $(a_0,a_1, \ldots, a_{n-1})$ of the coefficients of the characteristic polynomial $X^n+a_{n-1}X^{n-1}+\ldots+a_1X+a_0$ of $\gamma$. 
 The image of
\[
\left(\bigcup_{\gamma\in G(F)} \{g^{-1}\gamma g\mid g\in G(\A_F)^1\}\right)
\cap \supp f_F
\]
under $\Xi_F$ must be contained in
\[
\left(F\cap B_{NR^n}^{\signature}\cdot\widehat{\OOO_F}\right)^n
= \left(\OOO_F\cap B_{NR^n}^{\signature}\right)^n
\]
for some constant $N$ depending on $n$.
We claim that there exists a discrete subset $\Lambda\subseteq \R^{\signature}$ such that for all fields $F\in\F_{\signature}$ we have
\begin{equation}\label{eq:intersection}
\left(\OOO_F\cap B_{NR^n}^{\signature}\right)^n
=\left(\Lambda  \cap B_{NR^n}^{\signature}\right)^n.
\end{equation}
To prove this claim define for $D_0>0$ ($D_0$ will be determined later) the following objects:
\begin{itemize}
\item
$\EEE_{D_0}^0$ denote the set of all primitive fields occurring as subfields of elements in $\F_{\signature}$ with absolute discriminant $\leq D_0$ (including $\Q$). Here a primitive field is a number field with no non-trivial subfields.

\item
$\EEE_{D_0}$ denote the set of all subfields of elements of $\F_{\signature}$ which are composites of elements of $\EEE_{D_0}^0$.
\end{itemize}

 $\EEE_{D_0}^0$ and $\EEE_{D_0}$ are both finite sets and $\EEE_{D_0}^0\subseteq\EEE_{D_0}$. For every $E\in\EEE_{D_0}$ the ring of integers $\OOO_E$ is a discrete subset of $\R^{\signature}$, and we have an embedding $E_{\infty}=E\otimes\R\hookrightarrow\R^{\signature}$. We claim that we can choose $D_0$ such that \eqref{eq:intersection} holds with
\[
 \Lambda:=\bigcup_{E\in\EEE_{D_0}} \OOO_E.
\]
For this we use Lemma \ref{lem:succ:min:bound:below} and the notation therein: Let $x\in\OOO_F\minus(\Lambda\cap \OOO_F)$, and let $K=\Q(x)\subseteq F$ be the subfield of $F$ generated by $x$. Since $x\not\in\Lambda$ there exist a primitive subfield $E\subseteq K$ of absolute discriminant $D_E>D_0$. By Lemma~\ref{lem:succ:min:bound:below} there exists $c>0$ depending only on $d$ such that
\[
\|x\|_{\signature^F}=[F:K]\|x\|_{\signature^K}
 \geq c D_E^{4/d^3}.
\]
Now suppose that also $x\in B^{\signature}_{NR^n}$. 
Then 
\[
NR^n\geq\|x\|_{\signature}
\geq c D_E^{4/d^3} 
> cD_0^{4/d^3} .  
\]
Choosing any $D_0>\left(\frac{NR^n}{c}\right)^{d^3/4}$ 
leads to a contradiction, hence proving our claim that
\[
\left(\OOO_F\minus(\Lambda\cap \OOO_F)\right)\cap B_{NR^n}^{\signature}
\]
is empty for every $F\in\F_{\signature}$.

Since the set $\Lambda^n$ is discrete in $(\R^{\signature})^n$, the set
\[
\left(\Lambda \cap B_{NR^n}^{\signature}\right)^n
\]
 is finite. Writing $\chi_1, \ldots, \chi_s$ for the degree $n$ polynomials corresponding to these points, the proposition follows.
\end{proof}

Now let $G=\SL(2)$ or $\GL(2)$. 
We fix $\Sigma_0:=\{\chi_1, \ldots, \chi_s\}$ as in Proposition \ref{prop:finitely:many:classes} from now on and let $\Sigma(F)$ be defined as in \eqref{eq:def:sigma}. For each $F\in \F_{\signature}$ we define three disjoint subsets of $\Sigma_0$, and divide $\Sigma(F)$ accordingly:
\begin{itemize}
 \item
$\Sigma_0(F)_{\text{reg.ell.}}$ is the set of those elements in $\Sigma_0$ which have coefficients in $F$, and which are irreducible over $F$

\item
$\Sigma_0(F)_{\text{reg.split}}$ is the set of those elements in $\Sigma_0$ which have coefficients in $F$, and which split over $F$ into two distinct linear factors.

\item $\Sigma_0(F)_{\text{unip}}$ is the set of those elements in $\Sigma_0$ which have coefficients in $F$, and which are the square of a linear factor.
\end{itemize}
In each of the three cases, we define  $\Sigma(F)_*\subseteq\Sigma(F)$ to be is the union over all equivalence classes in $\OOO^{G(F)}$ whose characteristic polynomial is contained in $\Sigma_{0}(F)_*$. In particular, all central elements of $\Sigma(F)$ are contained in $\Sigma(F)_{\text{unip}}$ so that we can write
\[
\Sigma(F)\smallsetminus Z(F)
=\Sigma(F)_{\text{reg.ell.}} \sqcup \Sigma(F)_{\text{reg.split}}\sqcup \left(\Sigma(F)_{\text{unip}}\smallsetminus Z(F)\right).
\]
We decompose the sum-integral $j_{G\smallsetminus Z}^{F,T}(f_F)$ in \eqref{eq:fin:conj:classes} according to this decomposition of $\Sigma(F)\smallsetminus Z(F)$ as
\begin{equation}\label{eq:splitting}
 j_{G\smallsetminus Z}^{F,T}(f_F)
=: j_{\text{reg.ell.}}^{F,T}(f_F) + j_{\text{reg.split}}^{F,T}(f_F) + j_{\text{unip}\smallsetminus Z}^{F,T}(f_F),
\end{equation}
and treat each of the summands separately in the following sections.

\begin{lemma}\label{lem:classes}
Let $G=\SL_2$. There exists a constant $c>0$ depending only on  the degree $d$ of $\signature$ and $f_\infty$ such that the following holds: If $F\in \F_{\signature}$ and $\chi\in\Sigma_0(F)_{\text{reg.ell}}$, then the number of $\ooo\in \OOO^{\SL_2(F)}$  which have characteristic polynomial $\chi$ is bounded by $c$.
\end{lemma}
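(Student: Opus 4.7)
The plan is to parametrize the $\SL_2(F)$-conjugacy classes with characteristic polynomial $\chi$ via Galois cohomology, and then to use the non-archimedean factor $\One_{\cpt_f^F}$ of $f_F$ to impose local integrality constraints that trim the (a priori infinite) set of such classes down to a number bounded in terms of $d$ and $f_\infty$. More precisely, for $\chi\in\Sigma_0(F)_{\text{reg.ell}}$, the geometric equivalence classes $\ooo\in\OOO^{\SL_2(F)}$ with $\chi_\ooo=\chi$ coincide with $\SL_2(F)$-conjugacy classes of regular semisimple elements; since $\chi$ is irreducible and separable, all such elements form a single $\GL_2(F)$-stable class. Writing $K=F[X]/(\chi)$ (a quadratic field extension of $F$) and letting $T:=K^{(1)}=\ker(N_{K/F}:R_{K/F}\mathbb{G}_m\to\mathbb{G}_m)$ denote the norm-one torus, Hilbert~90 applied to $\SL_2$ together with the long exact sequence attached to $1\to K^{(1)}\to R_{K/F}\mathbb{G}_m\xrightarrow{N_{K/F}}\mathbb{G}_m\to 1$ gives a natural bijection between the $\SL_2(F)$-classes inside the stable class and $H^1(F,T)=F^\times/N_{K/F}(K^\times)$.

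Next I would restrict attention to those $\ooo$ that can contribute nontrivially to~\eqref{eq:fin:conj:classes}: such a class must contain a representative locally $\SL_2(F_v)$-conjugate to an element of $\SL_2(\OOO_{F_v})$ at every non-archimedean place $v$ of $F$. Locally, the quotient $H^1(F_v,T_v)=F_v^\times/N_{K_v/F_v}(K_v^\times)$ has order $1$ (when $v$ splits in $K$) or $2$ (otherwise). I would then verify that at any non-archimedean place $v$ where $K_v/F_v$ is a non-split \emph{unramified} quadratic extension, only the trivial local class admits a representative in $\SL_2(\OOO_{F_v})$: the companion matrix of $\chi$ supplies such an integral representative for the trivial class, while the equality $N_{K_v/F_v}(\OOO_{K_v}^\times)=\OOO_{F_v}^\times$ (valid in the unramified case) obstructs any integral representative for the nontrivial class via a valuation argument on determinants of $\GL_2(F_v)$-conjugating elements. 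Hence a contributing class must have trivial image in $H^1(F_v,T_v)$ at every $v$ that is unramified and non-split in $K/F$.

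Finally, I would invoke the Hasse norm theorem, applicable since $K/F$ is quadratic (hence cyclic), to ensure that the map $F^\times/N_{K/F}(K^\times)\hookrightarrow\bigoplus_v F_v^\times/N_{K_v/F_v}(K_v^\times)$ is injective. Combined with the local triviality constraint above, a contributing class is determined by its components at the finite set $S(K/F)$ of non-archimedean places of $F$ ramifying in $K/F$, yielding the crude bound $2^{|S(K/F)|}$. Since $S(K/F)$ consists of places dividing $\discr(K/F)$, which in turn divides $\discr(\chi)\in\OOO_F$, and since $\chi$ lives in the fixed finite set $\Sigma_0=\Sigma_0(d,f_\infty)$, the rational integer $|N_{F/\Q}(\discr\chi)|$ is bounded by a constant depending only on $d$ and $f_\infty$; hence so is $|S(K/F)|$, producing the desired uniform bound $c=c(d,f_\infty)$. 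The main subtle point is the local integrality claim at unramified non-split places, which will require a careful explicit comparison of $\SL_2(F_v)$-conjugacy classes with their $\GL_2(F_v)$-enlargements; the remaining steps are standard applications of Galois cohomology together with an elementary count of prime divisors in a number field of bounded degree.
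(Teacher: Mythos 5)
Your approach is correct and in essence parallel to the paper's. Both proofs rest on the same three ingredients: (a) Kottwitz's local integrality theorem, which forces the local conjugacy class to be trivial at every non-archimedean place where the torus is unramified and the discriminant is a unit, provided one restricts to elements that are integral there; (b) a global-to-local injectivity for conjugacy classes inside a stable class (you invoke the Hasse norm theorem, the paper cites \cite[Lemma~8.6]{ShTe16}, and for $\SL_2$ these give the same thing since $\ker^1(F,K^{(1)})$ vanishes for quadratic $K/F$); and (c) a uniform bound on the number of bad places coming from the boundedness of $|N_{F/\Q}(\discr\chi)|$ for $\chi$ in the fixed finite set $\Sigma_0$. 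Your presentation via $H^1(F,K^{(1)})\cong F^\times/N_{K/F}(K^\times)$ is arguably the cleaner way to organize this for $\SL_2$.

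There is one point where your formulation is actually more careful than the paper's. The set of $\SL_2(F)$-conjugacy classes with a fixed irreducible characteristic polynomial $\chi$ is in bijection with the group $F^\times/N_{K/F}(K^\times)$, which is \emph{infinite}; so the lemma as literally stated cannot be about all such classes. You explicitly restrict to the classes that can contribute to the truncated orbital integral~\eqref{eq:fin:conj:classes}, namely those admitting a $G(\A_{F,f})$-conjugate inside $\cpt_f^F$, and it is precisely this restriction that makes the integrality argument at unramified non-split places bite. The paper's intermediate claim that $\gamma_v$ and $\delta_v$ are $G(F_v)$-conjugate for every $\delta\in\Xi_\chi(F)$ and every $v\notin S$ requires $\delta$ to be integral at $v$ and fails for arbitrary $\delta\in\Xi_\chi(F)$ (e.g.\ $\gamma=\left(\begin{smallmatrix}0&1\\-1&0\end{smallmatrix}\right)$, $\delta=\left(\begin{smallmatrix}0&3\\-1/3&0\end{smallmatrix}\right)$ in $\SL_2(\Q)$ at $v=3$); the restriction to contributing classes is thus implicit in the paper, whereas you have made it explicit. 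Since Corollary~\ref{cor:reg:ell} only ever sums over classes with nonzero orbital integral, your version is exactly what is needed downstream, and your flagging of the unramified-integrality claim as the delicate step to be verified (it is \cite[\S 7--8]{Ko86}) is appropriate.
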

\begin{proof}
Let $F\in \F_{\signature}$ and $\chi\in\Sigma_0(F)_{\text{reg.ell}}$.  Let $\Xi_\chi(F)$ denote the set of all $\gamma\in G(F)$ with characteristic polynomial $\chi$.  Since $\chi$ is irreducible over $F$, the equivalence classes $\ooo\in \OOO^{G(F)}$ with characteristic polynomial $\chi$ are in fact $G(F)$-conjugacy classes in $\Xi_\chi(F)$. Note that $\Xi_\chi(F)$ is the stable conjugacy class of any $\gamma\in\Xi_{\chi}(F)$. Hence we need to show that the number of $G(F)$-conjugacy classes in $\Xi_\chi(F)$ is finite and bounded independently of $F$. 
 
 Instead of $G(F)$-conjugacy, we consider $G(\A_F)$-conjugacy in $\Xi_\chi(F)$. This suffices for our purposes, since by \cite[Lemma~8.6]{ShTe16} the number of $G(F)$-conjugacy classes mapping to the same $G(\A_F)$-conjugacy class is bounded by an absolute constant (independent of $F$).
 
Let $\gamma$ be the companion matrix of $\chi$. Then $\gamma\in \Xi_\chi(F)$. 
Let $\mathfrak{g}$ and $\mathfrak{g}_\gamma$ denote the Lie algebras of $G$ and of the centralizer  $G_\gamma$ of $\gamma$, respectively. Let 
\[
 D^G(\gamma)= \det\left(1-\Ad(\gamma)\right)_{\mathfrak{g}(F)/\mathfrak{g}_\gamma(F)}\in F^{\times}
\]
be the Weyl discriminant of $\gamma$,
and let $S_1$ denote the set of all non-archimedean places of $F$ such that $|D^G(\gamma)|_v\neq1$. Let $S_2$ denote the smallest set of all places of $F$ (including all archimedean ones) such that for any $v\not\in S_2$ we have $\gamma_v\in\cpt_v^F$. Let $S=S_1\cup S_2$. The set $S$ is finite, and the number of elements in $S$ can be bounded in terms of $\chi$ only (i.e., independently of $F$). What is more, there are is a finite set of prime numbers $P$ depending only on $\chi$ but not on $F$ such that if $v\in S$ is a non-archimedean place, then $v|p$ for some $p\in P$.

To bound the number of $G(\A_F)$-conjugacy classes in $\Xi_\chi(F)$, note first that if $v\not\in S$ and $\delta\in \Xi_\chi(F)$, then $\gamma_v$ and $\delta_v$ are conjugate in $G(F_v)$ by \cite[\S8]{Ko86}. 

If $v\in S$, then by the remark at the end of the last paragraph $F_v$ equals $\R$, $\C$, or an extension of $\Q_p$ of degree $\le d$ for some $p\in P$, that is, $F_v$ is an element in a finite collection of local fields which depends only on $\chi$. The number of $G(F_v)$-conjugacy classes in $\Xi_\chi(F)$ is finite \cite{Ko86}. It follows that there exists $c_1>0$ independent of $F$ such that the number of $G(\A_F)$-conjugacy classes in $\Xi_\chi(F)$ is bounded by $c_1$. Together with the previous remark on the relation between $G(F)$- and $G(\A_F)$-conjugacy classes, the assertion of the lemma follows.

\end{proof}

\section{The regular elliptic contribution}
From now on, we restrict to the two groups $G=\SL(2)$ and $G=\GL(2)$. 
 We start with bounding the regular elliptic contribution in \eqref{eq:splitting}.
Without loss of generality we assume from now on that the test function is non-negative, that is, $f_F\ge0$.
We have
\[
 j_{\text{reg.ell.}}^{F,T}(f_F)
\leq \int_{G(F)\backslash G(\A_F)^1} \sum_{\gamma\in \Sigma(F)_{\text{reg.ell.}}} f_F(x^{-1}\gamma x)\,dx,
\]
since $0\leq F(x,T)\leq 1$, and the right hand side converges.  Hence
\[
 j_{\text{reg.ell.}}^{F,T}(f_F)
\leq \sum_{[\gamma]\subseteq\Sigma(F)_{\text{reg.ell.}}}\vol(G_{\gamma}(F)\backslash G_{\gamma}(\A_F)^1) \int_{G_{\gamma}(\A_F)\backslash G(\A_F)} f_F(x^{-1}\gamma x)\,dx,
\]
where the sum runs over $G(F)$-conjugacy classes $[\gamma]$ in $\Sigma(F)_{\text{reg.ell.}}$.
Each of the orbital integrals can be factorized as
\begin{multline*}
 \int_{G_{\gamma}(\A_F)\backslash G(\A_F)} f_F(x^{-1}\gamma x)\,dx\\
=\int_{G_{\gamma}(\R^{\signature})\backslash G(\R^{\signature})} f_{\infty}(x^{-1}\gamma x)\,dx
\cdot\prod_{v<\infty}\int_{G_{\gamma}(F_v)\backslash G(F_v)} \One_{\cpt_v^F}(x^{-1}\gamma x)\,dx
\end{multline*}
with the product running over all non-archimedean places $v$ of $F$.
Note that the archimedean orbital integral can take values in a fixed finite set which is independent of $F\in\F_{\signature}$.

We keep our assumption that the characteristic polynomial of $\gamma$ has coefficients in $\OOO_F$.

\begin{lemma}\label{lem:orb:int:elliptic}
 There is a constant $\eta>0$ independent of $F$ such that for every $\gamma\in\Sigma(F)_{\text{reg.ell}}$ and every non-archimedean place $v$ of $F$ the following holds: Let $E_v/F_v$ denote the $F_v$-algebra $F_v\times F_v$ if $\gamma$ splits over $F_v$, and let $E_v$ be the quadratic splitting field of $\gamma$ over $F_v$ if $\gamma$ is non-split over $F_v$. 
 Then:
 \begin{enumerate}[label=(\roman{*})]
 \item If $\mathbf{G=GL(2)}$: For every non-archimedean place $v$ of $F$ we have
\[
\int_{G_{\gamma}(F_v)\backslash G(F_v)} \One_{\cpt_v}(x^{-1}\gamma x)\,dx
\leq \Delta_v(\gamma)^{-1} \frac{\vol(\OOO_{F_v})\vol(\OOO_{F_v}^{\times})^2}{\vol(\OOO_{E_v}^{\times})}
\]
where
\[
 \Delta_v(\gamma)= |(\tr\gamma)^2-4\det\gamma|_v
\]
is the $v$-adic absolute value of the discriminant of $\gamma$. In particular,
\[
 \int_{G_{\gamma}(\A_{F,f})\backslash G(\A_{F,f})} \One_{\cpt_f^F}(x^{-1}\gamma x)\,dx
\ll_{\signature, f_\infty} \Delta_{\signature}(\gamma) \frac{\vol(\widehat{\OOO_F})\vol(\widehat{\OOO_F}^{\times})^2}{\vol(\widehat{\OOO_{F(\gamma)}}^{\times})}
\]
for $\Delta_{\signature}(\gamma)=|(\tr\gamma)^2-4\det\gamma|_{\signature}$, $F(\gamma)$ the quadratic splitting field of $\gamma$ over $F$.

\item If $\mathbf{G=SL(2)}$: Let $\kappa_v=1$ unless $E_v/F_v$ is ramified quadratic field extension in which case $\kappa_v=2$. Define a norm $\NNN:E_v^{\times}\longrightarrow F_v^{\times}$ by $x\mapsto x_1x_2$ if $x=(x_1, x_2)\in E_v\simeq F_v\times F_v$, and by $x\mapsto x\bar{x}$ if $E_v/F_v$ is a quadratic field extension and $\bar{\cdot}:E_v\longrightarrow E_v$ denotes the non-trivial $F_v$-linear involution of $E_v$. Then for every non-archimedean place $v$ of $F$ we have
\[
\int_{G_{\gamma}(F_v)\backslash G(F_v)} \One_{\cpt_v}(x^{-1}\gamma x)\,dx
\leq \kappa_v \Delta_v(\gamma)^{-1} \frac{\vol(\OOO_{F_v})\vol(\OOO_{F_v}^{\times})}{\vol(\OOO_{E_v}^{(1)})},
\]
where $\OOO_{E_v}^{(1)}=\{x\in \OOO_{E_v}\mid \NNN(x)=1\}$, and
\[
\int_{G_{\gamma}(\A_{F,f})\backslash G(\A_{F,f})} \One_{\cpt_f^F}(x^{-1}\gamma x)\,dx
\ll_{\signature, f_\infty}  \Delta_{\signature}(\gamma) \frac{\vol(\widehat{\OOO_F})\vol(\widehat{\OOO_F}^{\times})}{\vol(\widehat{\OOO_{F(\gamma)}}^{(1)})},
\]
where $\widehat{\OOO_{F(\gamma)}}^{(1)}=\prod_{v<\infty} \OOO_{E_v}^{(1)}$.
\end{enumerate}
\end{lemma}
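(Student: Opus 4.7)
The plan is to establish the local bounds (i) and (ii) place by place, and then obtain the global statement by taking the product over $v<\infty$ and invoking the product formula. The local analysis splits according to whether $\gamma$ is split over $F_v$ (so $E_v\cong F_v\times F_v$) or elliptic over $F_v$ (so $E_v/F_v$ is a quadratic field extension).

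For the split case, since the orbital integral is a $G(F_v)$-conjugation invariant of $\gamma$, one may assume $\gamma=\diag(\alpha,\beta)$, making $G_\gamma(F_v)=T_0(F_v)$. The Iwasawa decomposition $G(F_v)=T_0(F_v)U_0(F_v)\cpt_v^F$, with the measure $dg=dt\,du\,dk$ from Section~\ref{sec:measures}, gives $T_0(F_v)\backslash G(F_v)\cong U_0(F_v)\times\cpt_v^F$ with product measure. Using the $\cpt_v^F$-conjugation invariance of $\One_{\cpt_v^F}$, the $k$-integral contributes $\vol(\cpt_v^F)=1$, and a direct computation with $u=\bigl(\begin{smallmatrix}1 & x\\ 0 & 1\end{smallmatrix}\bigr)$ gives $u^{-1}\gamma u=\diag(\alpha,\beta)+x(\alpha-\beta)E_{12}$, which lies in $\cpt_v^F$ only when $\alpha,\beta\in\OOO_{F_v}^\times$ and $x\in(\alpha-\beta)^{-1}\OOO_{F_v}$. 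The measure of the latter set is $|\alpha-\beta|_v^{-1}\vol(\OOO_{F_v})=\Delta_v(\gamma)^{-1/2}\vol(\OOO_{F_v})$; using $\Delta_v(\gamma)\le 1$ and the split-case identities $\vol(\OOO_{E_v}^\times)=\vol(\OOO_{F_v}^\times)^2$ and $\vol(\OOO_{E_v}^{(1)})=\vol(\OOO_{F_v}^\times)$, this yields the claimed bounds for both $\GL_2$ and $\SL_2$.

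For the elliptic case, $G_\gamma(F_v)$ is compact modulo $Z(F_v)$, and the strategy is to parametrize $G(F_v)/\cpt_v^F$ by $\OOO_{F_v}$-lattices in $F_v^2$. Viewing $F_v^2$ as a one-dimensional $E_v$-vector space via the action of $\gamma$, the condition $g^{-1}\gamma g\in\cpt_v^F$ translates into the lattice $g\OOO_{F_v}^2$ being stable under $\OOO_{F_v}[\gamma]$, hence a fractional ideal of some $\OOO_{F_v}$-order $\OOO'$ of $E_v$ containing $\OOO_{F_v}[\gamma]$. Modulo the $E_v^\times$-action (resp.\ the $E_v^{(1)}$-action for $\SL_2$) on the right, these lattices are classified by such orders $\OOO'$, which in turn correspond to divisors of the conductor $\mathfrak{f}$ of $\OOO_{F_v}[\gamma]$ in $\OOO_{E_v}$. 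The key identity $|\Delta_v(\gamma)|_v=|\mathfrak{f}|_v^{2}\,|d_{E_v/F_v}|_v$, together with the estimate $\vol(\OOO'^\times)\ge|\mathfrak{f}|_v\vol(\OOO_{E_v}^\times)$ (resp.\ the analogous estimate with $\OOO_{E_v}^{(1)}$), produces the factor $\Delta_v(\gamma)^{-1}$ and the claimed volume denominator; the factor $\kappa_v$ for $\SL_2$ arises from the index $[\OOO_{E_v}^\times:\OOO_{F_v}^\times\cdot\OOO_{E_v}^{(1)}]$, which equals $2$ precisely in the ramified case and $1$ otherwise.

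Globalizing, the product of local bounds over $v<\infty$ combined with the product formula
\[
1=\prod_v|(\tr\gamma)^2-4\det\gamma|_v=\Delta_{\signature}(\gamma)^{-1}\prod_{v<\infty}\Delta_v(\gamma)
\]
converts $\prod_{v<\infty}\Delta_v(\gamma)^{-1}$ into $\Delta_{\signature}(\gamma)$, and the local volume factors combine into the claimed adelic ones. At almost every $v$ the local bound is trivial (in fact equal to $\vol(\cpt_v^F)=1$), so any contribution from the finite set of ``bad'' places depends only on $\signature$ and on the finite set of polynomials fixed in Proposition~\ref{prop:finitely:many:classes}, hence on $f_\infty$, and is absorbed into the implicit constant. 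The main obstacle is the elliptic ramified case, where the lattice/order enumeration must be done with care to extract the correct factor $\kappa_v$ for $\SL_2$ and to match the precise Haar-measure normalizations on $\OOO_{E_v}^\times$ and $\OOO_{E_v}^{(1)}$ from Section~\ref{sec:measures}.
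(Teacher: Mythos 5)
Your overall strategy (split case via Iwasawa, elliptic case via a lattice/fixed-point count, then globalize over places) matches the paper's. The split case is essentially identical. For the elliptic case, however, the paper does not re-derive the count: it cites Kottwitz's explicit formula \cite[\S5.9]{Ko05} for the number of vertices of the Bruhat--Tits building fixed by $\gamma$, namely $\tfrac{q_v^{d_\gamma+1}-1}{q_v-1}+\tfrac{q_v^{d_\gamma}-1}{q_v-1}$ in the unramified case and $2\tfrac{q_v^{d_\gamma+1}-1}{q_v-1}$ in the ramified case, and for $\SL(2)$ it simply observes that the relevant fixed-vertex count can only be smaller than in the $\GL(2)$ case (since $\SL_2(F_v)/\SL_2(\OOO_{F_v})$ meets only part of the vertex set). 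Thus $\kappa_v$ in the paper comes directly from the explicit factor $2$ in the ramified Kottwitz formula, combined with the $\SL(2)$-specific ratios $\vol_{Z(F_v)}(Z(F_v)\cap\cpt_v^F)/\vol_{G(F_v)}(\cpt_v^F)$ and $\vol(Z(F_v)\backslash G_\gamma(F_v))$.

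There is a genuine error in your derivation of $\kappa_v$ for $\SL(2)$. You attribute it to the index $[\OOO_{E_v}^\times:\OOO_{F_v}^\times\cdot\OOO_{E_v}^{(1)}]$, asserting it equals $2$ in the ramified case and $1$ otherwise. For odd residue characteristic this is the reverse of the truth: from $1\to\OOO_{E_v}^{(1)}\to\OOO_{E_v}^\times\xrightarrow{\NNN}\OOO_{F_v}^\times$ one gets $[\OOO_{E_v}^\times:\OOO_{F_v}^\times\OOO_{E_v}^{(1)}]=[\NNN(\OOO_{E_v}^\times):(\OOO_{F_v}^\times)^2]$, and since the norm on units is surjective when $E_v/F_v$ is unramified but has image equal to $(\OOO_{F_v}^\times)^2$ (index $2$) when it is ramified, the index is $2$ unramified and $1$ ramified. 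So this index cannot be the source of an extra factor of $2$ that appears precisely when $E_v/F_v$ is ramified, and your argument for (ii) would output the wrong constant. A secondary concern: you say the $\gamma$-stable lattices modulo $E_v^\times$ are in bijection with orders between $\OOO_{F_v}[\gamma]$ and $\OOO_{E_v}$, giving $c+1$ classes with $c$ the conductor exponent, but the Kottwitz vertex count (which is what actually appears after unwinding $G_\gamma\backslash G/\cpt_v$) grows like $q_v^c$; the vertices are lattices modulo $F_v^\times$, not $E_v^\times$, and the discrepancy is accounted for by the volumes $\vol(\OOO'^\times\backslash\OOO_{E_v}^\times)$ you invoke — but this bookkeeping needs to be carried out explicitly, and your sketch elides it. Citing Kottwitz, as the paper does, sidesteps both issues.
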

\begin{proof}
We treat the cases $G=\GL(2)$ and $G=\SL(2)$ simultaneously unless noted otherwise.

If $\gamma$ splits over $F_v$, that is, if $\gamma$ is $G(F_v)$-conjugate to a diagonal matrix $\diag(\gamma_1, \gamma_2)$, $\gamma_1\neq\gamma_2$, then
\[
  \int_{G_{\gamma}(F_v)\backslash G(F_v)} \One_{\cpt_v^F}(x^{-1}\gamma x)\,dx
=\int_{U_0(F_v)} \One_{\cpt_v^F}(u^{-1}\gamma u)\,du
=|\gamma_1-\gamma_2|_v^{-1}\vol(\OOO_{F_v}).
\]
Note that in the split case $\OOO_{E_v}^{\times}=\OOO_{F_v}^{\times}\times\OOO_{F_v}^{\times}$, and $\OOO_{E_v}^{(1)}\simeq \OOO_{F_v}^{\times}$. Further, $|\gamma_1-\gamma_2|_v^{-1}=\Delta_v(\gamma)^{-1/2}\leq \Delta_v(\gamma)^{-1}$ since the characteristic polynomial of $\gamma$ can be assumed to have integral coefficients (otherwise the integral vanishes as remarked before). This proves the lemma in the split case for $G=\GL(2)$ as well as $G=\SL(2)$.

If $\gamma$ is non-split over $F_v$, $Z(F_v)\backslash G_{\gamma}(F_v)$ is compact, and we can write
\[
 \int_{G_{\gamma}(F_v)\backslash G(F_v)} \One_{\cpt_v^F}(x^{-1}\gamma x)\,dx
 =\vol(Z(F_v)\backslash G_{\gamma}(F_v))^{-1}  \int_{Z(F_v)\backslash G(F_v)} \One_{\cpt_v^F}(x^{-1}\gamma x)\,dx.
\]
Now $G_{\gamma}(F_v)$ can be identified with the quadratic splitting field $E_v$ of $\gamma$ over $F_v$ so that we can compute
\begin{multline*}
\vol(Z(F_v)\backslash G_{\gamma}(F_v))\\
=
\begin{cases}
 \vol(F_v^{\times}\backslash E_v^{\times}) 
=\left.\begin{cases}
\vol(\OOO_{F_v}^{\times}\backslash \OOO_{E_v}^{\times})						&\text{if }E_v/F_v\text{ is unramified,}\\
2\vol(\OOO_{F_v}^{\times}\backslash \OOO_{E_v}^{\times})						&\text{if }E_v/F_v\text{ is ramified,}
\end{cases}
\right\}
&\text{if }G=\GL(2),\\[7mm]
\vol(\{\pm 1\}\backslash E_v^{(1)})
=\frac{1}{2}\vol(\OOO_{E_v}^{(1)})
&\text{if }G=\SL(2).
\end{cases}
\end{multline*}
By the computations in \cite[\S 5.9]{Ko05} we have for $\GL(2)$ (the quotient of the volume factors before the integral is necessary to obtain the same normalization as in \cite{Ko05}):
\begin{multline*}
\frac{\vol_{Z(F_v)}(Z(F_v)\cap\cpt_v^F)}{\vol_{G(F_v)}(\cpt_v^F)}
\int_{Z(F_v)\backslash G(F_v)} \One_{\cpt_v^F}(x^{-1}\gamma x)\,dx\\
=\begin{cases}
\frac{q_v^{d_{\gamma}+1}-1}{q_v-1}+\frac{q_v^{d_{\gamma}}-1}{q_v-1}
&\text{if }E_v/F_v\text{is unramified},\\
 2 \frac{q_v^{d_{\gamma}+1}-1}{q_v-1}
&\text{if }E_v/F_v\text{is ramified},
\end{cases}
\end{multline*}
where $d_{\gamma}$ denotes the $F_v$-valuation of $(\tr\gamma)^2-4\det\gamma$ in the case that $E_v/F_v$ is unramified, and is determined as in \cite{Ko05}, again in terms of $\gamma$, if the extension is ramified. Further, $\vol_{Z(F_v)}$ (resp. $\vol_{G(F_v)}$) indicates that the volume is taken with respect to our measure on $Z(F_v)$ (resp. $G(F_v)$). Since
\[
\frac{\vol_{Z(F_v)}(Z(F_v)\cap\cpt_v^F)}{\vol_{G(F_v)}(\cpt_v^F)}
=\begin{cases}
\frac{\vol(\OOO_{F_v}^{\times})}{\vol(\OOO_{F_v})\vol(\OOO_{F_v}^{\times})^2}
=\frac{1}{\vol(\OOO_{F_v})\vol(\OOO_{F_v}^{\times})}
&\text{if }G=\GL(2),\\[4mm]
\frac{2}{\vol(\OOO_{F_v})\vol(\OOO_{F_v}^{\times})}
&\text{if }G=\SL(2),
\end{cases}
\]
multiplying by the inverse of the volume of $Z(F_v)\backslash G_{\gamma}(F_v)$ this proves the assertion in the non-split case for $G=\GL(2)$.

For $G=\SL(2)$ first note that if $\One_{\cpt_f^F}(x^{-1}\gamma x)=\One_{\GL(\OOO_{F_v})}(x^{-1}\gamma x)$ for all $\gamma\in G(F)$ and $x\in G(F_v)$. 
The integral
\[
 \frac{\vol_{Z(F_v)}(Z(F_v)\cap\cpt_v^F)}{\vol_{G(F_v)}(\cpt_v^F)}
\int_{Z(F_v)\backslash G(F_v)} \One_{\cpt_v^F}(x^{-1}\gamma x)\,dx\\
\]
can be computed as in \cite[\S 5.9]{Ko05} in the case of $\GL(2)$ by counting vertices in the same Bruhat-Tits building which are fixed by $\gamma$. However, for $\SL(2)$ not all vertices necessarily represent elements in $G(F_v)/\cpt_v^F$ so that the set of fixed points might has less elements than in the $\GL(2)$ case. Hence 
\begin{multline*}
\frac{\vol_{Z(F_v)}(Z(F_v)\cap\cpt_v^F)}{\vol_{G(F_v)}(\cpt_v^F)}
\int_{Z(F_v)\backslash G(F_v)} \One_{\cpt_v^F}(x^{-1}\gamma x)\,dx\\
\le \begin{cases}
 \frac{q_v^{d_{\gamma}+1}-1}{q_v-1}+\frac{q_v^{d_{\gamma}}-1}{q_v-1}
&\text{if }E_v/F_v\text{is unramified},\\
 2 \frac{q_v^{d_{\gamma}+1}-1}{q_v-1}
&\text{if }E_v/F_v\text{is ramified}.
\end{cases}
\end{multline*}
The estimates for the integrals over $G_\gamma(\A_{F,f})\backslash G(\A_F)$ now follows by multiplying all the local estimates and taking into account that $E_v/F_v$ can be ramified only if the relative different is divisible by $v$. But this can only happen if the residue characteristic of $v$ is even or one of finitely many prime numbers which are determined by $\gamma$ alone. Moreover, the residue characteristic of those $v$ for which $d_\gamma\not=0$, is also contained in a finite set of prime numbers depending only on $\gamma$ but not on $F$. More precisely, the sets of such prime numbers only depend on the characteristic polynomial of $\gamma$.  Since the set of all occurring  characteristic polynomials is finite and independent of $F$, the assertion follows.
\end{proof}

\begin{cor}\label{cor:reg:ell}
There exists a constant $c>0$ such that for all $F\in\F_{\signature}$ we have
\[
 j_{\text{reg.ell.}}^{F,T}(f_F)
\leq
\begin{cases}
c D_F^{-\frac{1}{2}} (\log D_F)^{2d}				&\text{if }G=\GL(2),\\
c (\log D_F)^{2d}								&\text{if }G=\SL(2),
\end{cases}
\]
for all $T\in\aaa^+$ with $\alpha(T)\geq \rho\log D_F$.
\end{cor}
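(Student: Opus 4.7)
The plan is to first drop the truncation factor (since $0 \le F(x,T) \le 1$) and bound
\[
j_{\text{reg.ell.}}^{F,T}(f_F) \le \sum_{[\gamma]} \vol(G_{\gamma}(F)\backslash G_{\gamma}(\A_F)^1) \cdot O_\gamma(f_F),
\]
the sum running over $G(F)$-conjugacy classes $[\gamma] \subseteq \Sigma(F)_{\text{reg.ell.}}$, where $O_\gamma(f_F)$ denotes the adelic orbital integral. Each such orbital integral factors into archimedean and non-archimedean parts. By Proposition~\ref{prop:finitely:many:classes}, the characteristic polynomial $\chi_\gamma$ of any contributing $\gamma$ lies in the fixed finite set $\Sigma_0$, so the archimedean orbital integral and the factor $\Delta_{\signature}(\gamma)$ appearing in Lemma~\ref{lem:orb:int:elliptic} are bounded by constants depending only on $f_\infty$; the non-archimedean part is then estimated via Lemma~\ref{lem:orb:int:elliptic}.

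Next I would compute the centralizer volume via the class number formula. For $\GL(2)$, $G_\gamma \simeq \Res_{F(\gamma)/F}\mathbb{G}_m$ gives $\vol(G_\gamma(F)\backslash G_\gamma(\A_F)^1) = \res_{s=1}\zeta_{F(\gamma)}(s)$, while for $\SL(2)$ the norm-one torus $T^{(1)} = \ker(\Norm\colon \Res_{F(\gamma)/F}\mathbb{G}_m \to \mathbb{G}_m)$ has volume comparable to $\res_{s=1}\zeta_{F(\gamma)}(s)/\res_{s=1}\zeta_F(s)$. The discriminant comparison $D_{F(\gamma)} \le c\, D_F^2$, with $c = c(d,f_\infty)$, follows because $F(\gamma)/F$ is quadratic and its relative discriminant divides $4\bigl((\tr\gamma)^2-4\det\gamma\bigr)\OOO_F$, where the generator is an algebraic integer varying in a finite set determined by $\Sigma_0$, forcing its $F/\Q$-norm to be bounded by a constant to the $d$-th power. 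Combined with \eqref{eq:upper:bound:residue} this gives $\res_{s=1}\zeta_{F(\gamma)}(s) \ll_{d,f_\infty} (\log D_F)^{2d-1}$; for $\SL(2)$ the bound \eqref{eq:lower:bound:regulator} further yields $\res_{s=1}\zeta_F(s)^{-1} \ll_d D_F^{1/2}$.

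In the $\GL(2)$ case, the non-archimedean factor from Lemma~\ref{lem:orb:int:elliptic}(i) evaluates (up to bounded prefactors) to
\[
\vol(\widehat{\OOO_F})\vol(\widehat{\OOO_F}^\times)^2 \cdot \vol(\widehat{\OOO_{F(\gamma)}}^\times)^{-1} = D_F^{-3/2} \cdot D_{F(\gamma)}^{1/2} \ll D_F^{-1/2},
\]
so each conjugacy class contributes $\ll D_F^{-1/2}(\log D_F)^{2d-1}$, and summing over the at most $|\Sigma_0|$ classes yields the stated bound. For $\SL(2)$, the analogous computation combined with a place-by-place analysis of $\vol(\widehat{\OOO_{F(\gamma)}}^{(1)})^{-1}$ in Lemma~\ref{lem:orb:int:elliptic}(ii) should cancel the $D_F^{1/2}$ factor coming from the centralizer volume, producing a per-class contribution $\ll (\log D_F)^{2d-1}$; Lemma~\ref{lem:classes} then bounds the number of $\SL(2,F)$-conjugacy classes per characteristic polynomial by a constant independent of $F$, giving the stated bound $\ll (\log D_F)^{2d}$. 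The main obstacle is the careful bookkeeping of measure normalizations for the norm-one torus $T^{(1)}$: one must verify that the local volumes $\vol(\OOO_{F(\gamma)_v}^{(1)})$, as induced from the measures on $G_\gamma(F_v) \subset G(F_v)$, combine across all non-archimedean places to produce precisely the expected cancellation against $\res_{s=1}\zeta_F(s)^{-1}$.
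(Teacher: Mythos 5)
Your $\GL(2)$ argument follows the paper closely and is correct: drop the truncation factor, bound by the sum of global orbital integrals, use Lemma~\ref{lem:orb:int:elliptic}(i) plus the residue bound \eqref{eq:upper:bound:residue} and the fact that $\N_{F(\gamma)/\Q}(\delta_{F(\gamma)/F})$ is bounded since $\chi_\gamma$ lies in the finite set $\Sigma_0$, then sum over the at most $|\Sigma_0|$ conjugacy classes. That part is fine.

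For $\SL(2)$, however, your route diverges from the paper and has a genuine gap. You propose to estimate $\vol(G_\gamma(F)\backslash G_\gamma(\A_F)^1)$ by claiming it is ``comparable to $\res_{s=1}\zeta_{F(\gamma)}(s)/\res_{s=1}\zeta_F(s)$,'' which would introduce an unwanted factor $\res_{s=1}\zeta_F(s)^{-1}\ll_d D_F^{1/2}$ that you then hope to cancel against the local volumes $\vol(\widehat{\OOO_{F(\gamma)}}^{(1)})^{-1}$ in Lemma~\ref{lem:orb:int:elliptic}(ii). This ``comparable'' claim is a statement about the Tamagawa number of the norm-one torus and the measure defect between the chosen Haar measure and the Tamagawa measure; it is not an elementary fact, and you explicitly flag the required cancellation as ``the main obstacle,'' i.e., you have not verified it. The paper avoids this altogether: it observes that the maps $\A_{F(\gamma)}^{(1)}\hookrightarrow\A_{F(\gamma)}^1\twoheadrightarrow F(\gamma)^\times\backslash\A_{F(\gamma)}^1$ have kernel $F(\gamma)^{(1)}$, so that $F(\gamma)^{(1)}\backslash\A_{F(\gamma)}^{(1)}$ embeds into $F(\gamma)^\times\backslash\A_{F(\gamma)}^1$, giving the one-sided bound $\vol(G_\gamma(F)\backslash G_\gamma(\A_F)^1)\le\res_{s=1}\zeta_{F(\gamma)}(s)$ directly. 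This is weaker but sufficient, and it never introduces a $D_F^{1/2}$ that has to be recovered later; the extra $D_F^{1/2}$ in the $\SL(2)$ volume $\vol(G(F)\backslash G(\A_F)^1)=D_F^{1/2}\zeta_F(2)$ is what ultimately absorbs the difference with the $\GL(2)$ bound (this happens in the proof of Lemma~\ref{lem:upper:bound:T}, not in the corollary itself). To repair your argument you would either need to prove the torus-volume identity precisely (including the contribution of $\vol(\widehat{\OOO_{F(\gamma)}}^{(1)})$ place by place), or replace it by the simpler one-sided embedding bound as the paper does.
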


\begin{proof}
We consider the case $G=\GL(2)$ first.
We need to estimate from above the volume of the quotient
\[
G_{\gamma }(F)\backslash G_{\gamma}(\A_F)^1
=F(\gamma)^{\times}\backslash\A_{F(\gamma)}^1,
\]
where $F(\gamma)$ denotes the quadratic splitting field of $\gamma$ over $F$. By our normalization of measures, the volume of this last quotient is
\[
 \vol(F(\gamma)^{\times}\backslash\A_{F(\gamma)}^1)
=\res_{s=1}\zeta_{F(\gamma)}(s),
\]
where $\zeta_{F(\gamma)}(s)$ denotes the Dedekind zeta function for the field $F(\gamma)$. By \eqref{eq:upper:bound:residue} the residuum can be estimated in terms of the discriminant of $F(\gamma)$ so that we need to compute $D_{F(\gamma)}$. For a finite field extension $K_1/K_2$ let $\delta_{K_1/K_2}\subseteq \OOO_{K_1}$ denote the relative different of $K_1$ over $K_2$. Then
\[
D_{F(\gamma)}
=\N_{F(\gamma)/\Q}(\delta_{F(\gamma)/\Q})
=\N_{F(\gamma)/\Q}(\delta_{F(\gamma)/F}) D_F^2
\]
where $\N_{F(\gamma)/\Q}$ denotes the ideal norm of the field extension $F(\gamma)/\Q$. 
Now $F(\gamma)$ is a quadratic extension of $F$ and the characteristic polynomial of $\gamma$ is in a  fixed finite set (independent of $F$) so that $\N_{F(\gamma)/\Q}(\delta_{F(\gamma)/F})\ll_{d, f_\infty} 1$. 
Hence using \eqref{eq:upper:bound:residue} we have 
\begin{equation}\label{eq:bound:volume:centralizer}
\vol(G_{\gamma }(F)\backslash G_{\gamma}(\A_F)^1)
\ll_{d,f_\infty} (\log D_{F(\gamma)})^{2d-1}
\ll_{d,f_\infty} (\log D_F)^{2d-1}.
\end{equation}
By Lemma \ref{lem:orb:int:elliptic} and our normalization of measures we get
\begin{multline}\label{eq:bound:orb:integral:gl2:elliptic}
\int_{G_{\gamma}(\A_{F,f})\backslash G(\A_{F,f})} \One_{\cpt_f^F}(x^{-1}\gamma x)\,dx
\ll_{d, f_\infty} \log D_F D_F^{-3/2} D_{F(\gamma)}^{1/2}\\
=  \log D_F D_F^{-1/2} \N_{F(\gamma)/\Q}(\delta_{F(\gamma)/F})^{1/2}
\ll_{d, f_\infty} \log D_F D_F^{-1/2}.
\end{multline}
Putting \eqref{eq:bound:volume:centralizer} and \eqref{eq:bound:orb:integral:gl2:elliptic} together, taking into account that the archimedean orbital integrals take on values in a finite set only (independent of $F\in\F_{\signature}$) and that $|\Sigma_0(F)_{\text{reg.ell}}|\leq |\Sigma_0|$ (recall that $\GL_2(F)$-conjugacy classes in $\Sigma(F)_{\text{reg.ell.}}$ are in bijection with elements in $\Sigma_0(F)_{\text{reg.ell.}}$),  yields the assertion for $G=\GL(2)$.

Now assume that $G=\SL(2)$. In this case
\[
G_{\gamma}(F)\backslash G_{\gamma}(\A_F)^1
\simeq F(\gamma)^{(1)} \backslash \A_{F(\gamma)}^{(1)},
\]
where $F(\gamma)^{(1)}=\OOO_{F(\gamma)}^{(1)}$ denotes the set of norm $1$ elements in $F(\gamma)$ for the norm $F(\gamma)\ni x\mapsto x\bar{x}\in F$, with $\bar{x}$ denoting the image of $x$ under the non-trivial $F$-linear involution on $F(\gamma)$, and $\A_{F(\gamma)}^{(1)}= F(\gamma)_{\infty}^{(1)}\times\widehat{\OOO_{F(\gamma)}}^{(1)}$ with $F(\gamma)_{\infty}^{(1)}=\OOO_{F(\gamma)}^{(1)}\otimes\R\subseteq F(\gamma)_{\infty}$. Note that if $x\in \A_{F(\gamma)}^\times$ the condition that $x\in\A_{F(\gamma)}^{(1)}$ is equivalent to the condition that $|x|_v:=\prod_{w|v} |x_w|_w=1$ for every place $v$ of $F$. Here $w$ runs over all places of $F(\gamma)$ lying above $v$. Hence we canonical have
\[
 \A_{F(\gamma)}^{(1)}\hookrightarrow \A_{F(\gamma)}^1\twoheadrightarrow F(\gamma)^\times \backslash \A_{F(\gamma)}^1
\]
and the kernel of the composition of the maps equals $F(\gamma)^\times\cap \A_{F(\gamma)}^{(1)}= F(\gamma)^{(1)}$. Hence 
\[
 F(\gamma)^{(1)} \backslash \A_{F(\gamma)}^{(1)}
 \subseteq F(\gamma)^\times\backslash \A_{F(\gamma)}^1,
\]
so that 
\[
\vol\left(G_{\gamma}(F)\backslash G_{\gamma}(\A_F)^1\right)
= \vol\left(F(\gamma)^{(1)} \backslash \A_{F(\gamma)}^{(1)}\right)
\le \vol\left(F(\gamma)^\times\backslash \A_{F(\gamma)}^1\right)
=\res_{s=1}\zeta_{F(\gamma)}(s).
\]
Using the bound \eqref{eq:upper:bound:residue} for the residuum
 and combining this with Lemma \ref{lem:orb:int:elliptic} we obtain
\begin{align*}
\vol(G_{\gamma}(F)\backslash G_{\gamma}(\A_F)^1)
&\int_{G_{\gamma}(\A_{F,f})\backslash G(\A_{F,f})} \One_{\cpt_f^F}(x^{-1}\gamma x)\,dx\\
&\ll_{d, f_\infty} (\log D_{F(\gamma)})^{2(d-1)} \Delta_{\signature}(\gamma) \vol(\widehat{\OOO_F})\vol(\widehat{\OOO_F}^{\times}) \\
& \ll_d (\log D_{F(\gamma)})^{2(d-1)}   D_F^{-1} \log D_F  
\end{align*}
Since $D_{F(\gamma)}$ can be bounded by a constant multiple of $D_F$ which depends on the characteristic polynomial of $\gamma$ alone, and since the set of occuring characteristic polynomials is finite and only depending on $d$ and $f_\infty$, we obtain
\[
\vol(G_{\gamma}(F)\backslash G_{\gamma}(\A_F)^1)
\int_{G_{\gamma}(\A_{F})\backslash G(\A_{F})} f_{\infty}\cdot\One_{\cpt_f^F}(x^{-1}\gamma x)\,dx
\ll_{d,f_\infty} (1+ \log D_F)^{2d}.
\]
The assertion for $G=\SL(2)$ then follows as for $\GL(2)$ with the difference that number of $\SL_2(F)$-conjugacy classes in $\Sigma(F)_{\text{reg.ell.}}$ is bounded by $c|\Sigma_0(F)_{\text{reg.ell}}\le c|\Sigma_0|$ with $c$ as in Lemma~\ref{lem:classes}.
\end{proof}

\section{The regular split contribution}
In this section we bound the contribution of regular split conjugacy classes in \eqref{eq:splitting}.
If $\gamma\in \Sigma(F)_{\text{reg.split}}$, then it is conjugate in $G(F)$ to a diagonal element $\diag(\gamma_1, \gamma_2)\in G(F)$, $\gamma_1\neq\gamma_2$. Let $\Sigma'_0(F)_{\text{reg.split}}$ be a set of diagonal representatives for the $G(F)$-conjugacy classes in $\Sigma(F)_{\text{reg.split}}$, that is, for every $\gamma\in\Sigma_0(F)_{\text{reg.split}}$ there is exactly one $\delta\in\Sigma_0'(F)_{\text{reg.split}}$ such that $\delta$ and $\gamma$ are conjugate in $G(F)$.
We can write $j_{\text{reg.split}}^{F,T}(f_F)$  as
\begin{align*}
 j_{\text{reg.split}}^{F,T}(f_F)
& =\sum_{\delta\in \Sigma_0'(F)_{\text{reg.split}}} \int_{G_{\delta}(F)\backslash G(\A_F)} F(x,T) f_F(x^{-1}\delta x)\,dx \\
& = \sum_{\delta\in \Sigma_0'(F)_{\text{reg.split}}}\int_{U_0(\A_F)} f_F(u^{-1}\delta u) \int_{A_G T_0(F)\backslash T_0(\A_F)} F(tu,T)\,dt \,du,
\end{align*}
where we used $G_{\delta}(F)=T_0(F)$ for the second equality.

\begin{proposition}\label{prop:split:contribution}
 There exists a constant $c>0$ such that for all $F\in\F_{\signature}$ and all  $T\in\aaa^+$ with $\alpha(T)\geq\rho\log D_F$ we have
\[
 \frac{j_{\text{reg.split}}^{F,T}(f_F)}{\vol(\widehat{\OOO_F}) \vol(T_0(F)\backslash T_0(\A_F)^1) } 
\leq c\varpi(T).
\]
\end{proposition}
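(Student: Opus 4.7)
The plan is to bound the three factors in the expression for $j_{\text{reg.split}}^{F,T}(f_F)$ separately. First, by Proposition~\ref{prop:finitely:many:classes} the set $\Sigma_0(F)_{\text{reg.split}}\subseteq\Sigma_0$ has cardinality bounded independently of $F$, and the number of $G(F)$-conjugacy classes with a given split characteristic polynomial is at most two (since such a polynomial has two distinct roots $\delta_1,\delta_2\in F^\times$ determining the diagonal element up to Weyl action). Thus $|\Sigma_0'(F)_{\text{reg.split}}|\ll_{f_\infty}1$, and it suffices to bound the $\delta$-summand uniformly.

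Next, I would compute the unipotent orbital integral factor. For $\delta=\diag(\delta_1,\delta_2)\in \Sigma_0'(F)_{\text{reg.split}}$ and $u(x)=\begin{pmatrix} 1 & x \\ 0 & 1\end{pmatrix}$, one has
\[
u(x)^{-1}\delta u(x)=\begin{pmatrix} \delta_1 & (\delta_1-\delta_2)x \\ 0 & \delta_2\end{pmatrix}.
\]
At each non-archimedean place $v$, the condition $u(x)^{-1}\delta u(x)\in \cpt_v^F$ forces $x\in (\delta_1-\delta_2)^{-1}\OOO_{F_v}$, a set of measure $|\delta_1-\delta_2|_v^{-1}\vol(\OOO_{F_v})$. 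Taking the product over all finite places and using the product formula $\prod_{v<\infty}|\delta_1-\delta_2|_v^{-1}=|\delta_1-\delta_2|_\infty$, one obtains
\[
\int_{U_0(\A_{F,f})}\One_{\cpt_f^F}(u^{-1}\delta u)\,du=|\delta_1-\delta_2|_\infty\vol(\widehat{\OOO_F}).
\]
Since $\delta_1,\delta_2$ come from the finite set $\Sigma_0$ and hence $|\delta_1-\delta_2|_\infty$ is bounded in terms of $f_\infty$, the archimedean orbital integral $\int_{U_0(F_\infty)}f_\infty(u^{-1}\delta u)\,du$ is also bounded uniformly. Altogether, for every $u\in U_0(\A_F)$,
\[
\int_{U_0(\A_F)}f_F(u^{-1}\delta u)\,du\ll_{f_\infty}\vol(\widehat{\OOO_F}).
\]

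The final and most delicate step is to bound the truncation integral $I(u,T):=\int_{A_G T_0(F)\backslash T_0(\A_F)}F(tu,T)\,dt$ uniformly in $u\in U_0(\A_F)$ by a constant multiple of $\varpi(T)\vol(T_0(F)\backslash T_0(\A_F)^1)$. I would decompose $T_0(\A_F)=T_0(\A_F)^1\cdot A_0^G$, where $A_0^G$ is a section of the Iwasawa map $H_0:T_0(\A_F)\to\aaa$ and $T_0(F)\subset T_0(\A_F)^1$ by the product formula. Since $H_0(tau)=H_0(a)$ for $t\in T_0(\A_F)^1$, $a\in A_0^G$, $u\in U_0(\A_F)$, the condition $F(g,T)=1$ for $g=tau$ forces $\varpi(H_0(a))\le \varpi(T)$ (taking $\delta=1$ in the definition of $F(\cdot,T)$). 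The additional conditions from $\delta\ne1$, combined with the standard rank-one reduction theory, confine $a$ to a set of measure $\le C\varpi(T)$ (with $C$ absolute), uniformly in $u$; this is a well-known feature of Arthur's truncation function in rank one. After integrating the compact fibre $T_0(F)\backslash T_0(\A_F)^1$, one obtains $I(u,T)\le C\varpi(T)\vol(T_0(F)\backslash T_0(\A_F)^1)$.

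Combining the three bounds, the $\delta$-summand is at most $C\,\varpi(T)\vol(\widehat{\OOO_F})\vol(T_0(F)\backslash T_0(\A_F)^1)$, and summing over the finitely many $\delta$ yields the desired estimate. The main obstacle is step three: establishing the uniform-in-$u$ linear growth of $I(u,T)$, which requires care because $F(\cdot,T)$ is not simply a function of $H_0(t)$ but encodes Siegel-set conditions via all $\delta\in G(F)$; the non-split coset $\delta=w$ (Weyl element) is the one that provides the lower bound on $\varpi(H_0(a))$ and whose dependence on $u$ must be controlled.
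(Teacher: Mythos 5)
The overall structure (factor into the number of conjugacy classes, the unipotent orbital integral, and the truncation integral) matches the paper, and your first two steps are correct. However, step three contains a genuine error: you assert that $I(u,T)=\int_{A_G T_0(F)\backslash T_0(\A_F)} F(tu,T)\,dt$ is bounded by $C\varpi(T)\vol(T_0(F)\backslash T_0(\A_F)^1)$ \emph{uniformly} in $u\in U_0(\A_F)$, calling this ``a well-known feature of Arthur's truncation function in rank one.'' This is false. The condition $F(tu,T)=1$ involves all $\delta\in G(F)$; taking $\delta$ a Weyl element, the resulting inequality on $|a|_{\A_F}$ depends on $u$ through a height function. The paper's Lemma~\ref{lem:torus:int:truncation} computes the correct bound to be $\delta_G\left(\varpi(T)+\log\|(1,x)\|_{\A_F}\right)$ where $u=u(x)$, and the extra term $\log\|(1,x)\|_{\A_F}$ is unbounded on $U_0(\A_F)$.

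Consequently, your ``combine the three bounds'' step is not valid as stated: you cannot simply multiply a bound on $\int f_F(u^{-1}\delta u)\,du$ by a uniform bound on $I(u,T)$. The correct argument instead integrates the $u$-dependent bound against $f_F(u^{-1}\delta u)$, so that the compact support of $f_F$ forces $x$ into a region where $\log\|(1,x)\|_{\A_F}$ stays bounded. Making this precise requires a change of variables $x\mapsto(\gamma_1-\gamma_2)^{-1}x$ and a place-by-place analysis: at non-archimedean $v$ where $|\gamma_1-\gamma_2|_v=1$ the logarithmic contribution vanishes; at the remaining places one uses the product formula and the finiteness of $\Sigma_0$ to bound $\sum_v|\log|\gamma_1-\gamma_2|_v|$ independently of $F$. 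You implicitly flag that the $u$-dependence ``must be controlled'' in your final paragraph, but the proof body treats it as if it were absent, which is where the gap lies.
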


For the proof of this proposition we need the following lemma:
\begin{lemma}\label{lem:torus:int:truncation}
For any $u=\left(\begin{smallmatrix}1&x\\0&1\end{smallmatrix}\right)\in U_0(\A_F)$ and any $T\in\aaa^+$ with $\alpha(T)\geq\rho\log D_F$ we have
\begin{equation}\label{eq:trunc:torus}
\frac{\int_{A_G T_0(F)\backslash T_0(\A_F)} F(tu,T)\, dt}{\vol(T_0(F)\backslash T_0(\A_F)^1)}
\leq \delta_G \left(\varpi(T)+\log\|(1,x)\|_{\A_F}\right)
\end{equation}
where $\delta_G=1$ if $G=\SL(2)$ and $\delta_G=2$ if $G=\GL(2)$.
\end{lemma}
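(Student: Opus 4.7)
The plan is to decompose the truncation condition $F(tu,T)=1$ via the Bruhat decomposition $G(F)=B(F)\sqcup B(F)wU_0(F)$, where $w=\begin{pmatrix}0&1\\-1&0\end{pmatrix}$. Because $B(F)=T_0(F)U_0(F)$ and the product formula gives $\log|\alpha|_{\A_F}=0$ for every $\alpha\in F^\times$, the Iwasawa projection $H_0$ is left-invariant under $B(F)$; hence among all $\delta\in G(F)$, only the identity and the representatives $\delta=wu'_y$ with $u'_y=\begin{pmatrix}1&y\\0&1\end{pmatrix}$, $y\in F$, impose non-trivial constraints on $\varpi(H_0(\delta tu))$.

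Writing $t=\diag(t_1,t_2)$ with $|t_1t_2|_{\A_F}=1$ (forced by $tu\in G(\A_F)^1$) and setting $s:=\varpi(H_0(tu))=\tfrac{1}{2}(\log|t_1|_{\A_F}-\log|t_2|_{\A_F})$, the condition from $\delta=e$ is simply $s\leq\varpi(T)$. For $\delta=wu'_y$, direct computation gives
\[
wu'_ytu=\begin{pmatrix}0 & t_2\\-t_1 & -(t_1x+yt_2)\end{pmatrix},
\]
whose bottom row $-t_1(1,x+yt_2/t_1)$ determines the Iwasawa diagonal: at each place $v$ the local norm of the bottom row of an Iwasawa factorization equals $|s_2|_v$, because the $K_v$-factor preserves the relevant place-wise height. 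Multiplying over all $v$ and combining with $|s_1s_2|_{\A_F}=|\det|_{\A_F}=1$ yields
\[
\varpi(H_0(wu'_y tu)) = -s - \log\|(1,x+yt_2/t_1)\|_{\A_F}.
\]
Requiring this to be $\leq\varpi(T)$ for every $y\in F$ and specializing to $y=0$ gives the weak lower bound $s\geq -\varpi(T)-\log\|(1,x)\|_{\A_F}$, justified by the place-by-place inequality $\|(1,w_v)\|_v\geq 1$ which implies $\log\|(1,x)\|_{\A_F}\geq 0$.

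Combining both constraints, $F(tu,T)=1$ forces $s$ into an interval of length at most $2\varpi(T)+\log\|(1,x)\|_{\A_F}\leq 2(\varpi(T)+\log\|(1,x)\|_{\A_F})$. Decomposing $A_GT_0(F)\backslash T_0(\A_F)$ compatibly with the measure conventions of \S\ref{sec:measures} as the product of the compact quotient $T_0(F)\backslash T_0(\A_F)^1$ with a one-parameter subgroup on which $s$ is a linear coordinate carrying measure $ds$, one checks that the weak $s$-range is independent of the $T_0(\A_F)^1$-coordinate, so the integral factors:
\[
\int_{A_GT_0(F)\backslash T_0(\A_F)} F(tu,T)\,dt \leq 2\bigl(\varpi(T)+\log\|(1,x)\|_{\A_F}\bigr)\vol(T_0(F)\backslash T_0(\A_F)^1).
\]

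This proves the lemma directly with $\delta_G=2$, which suffices for $\GL(2)$. For the sharper constant $\delta_G=1$ in the $\SL(2)$ case---where the torus is one-dimensional and $A_G$ is trivial---one must refine the weak lower bound on $s$, exploiting the full family of constraints from $\delta=wu'_y$ as $y$ varies in $F$ via a Minkowski-type estimate for $\inf_{y\in F}\log\|(1,x+yt_2/t_1)\|_{\A_F}$; removing the loose factor of $2$ I would expect to be the main technical obstacle.
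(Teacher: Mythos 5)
Your overall strategy coincides with the paper's: you reduce the truncation condition via Bruhat decomposition to the constraints from $\delta=e$ and $\delta=w$ (your $B(F)$-invariance observation and the subsequent specialization to $y=0$ is exactly the reduction the paper makes — it uses precisely those two coset representatives and no more), and your computation of the $\varpi$-displacements via the bottom rows of the Iwasawa decomposition is structurally correct.

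The gap you flag is real, but your diagnosis is off. You speculate that $\delta_G=1$ for $\SL(2)$ requires exploiting the full family $\delta=wu_y'$ via a Minkowski-type argument over $y\in F$. The paper does nothing of the sort; it still uses only $\delta=e$ and $\delta=w$. The factor-of-two refinement comes from the coordinate parametrization on the torus, not from extra Bruhat constraints. Parametrize $T_0^{\SL_2}(\A_F)$ directly by $a\mapsto\diag(a,a^{-1})$, as the paper does, rather than by $s=\varpi(H_0(tu))$. The constraint from $\delta=e$ then bounds $2\log|a|_{\A_F}$, so $\log|a|_{\A_F}$ itself ranges over an interval of half the naive length, and similarly from $\delta=w$; integrating $d^{\times}a$ over that interval gives the sharp bound with $\delta_G=1$ directly. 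Equivalently, in your $s$-coordinate, the measure on the one-parameter direction of $T_0^{\SL_2}(\A_F)$ is not $ds$ as you assert: since $\diag(a,a^{-1})$ doubles the $\varpi$-displacement (under the convention the paper's computation actually uses — note that the paper's bounds $e^{\varpi(T)}\geq|a|^2_{\A_F}$ for $\SL(2)$ and $e^{\varpi(T)}\geq|a|_{\A_F}$ for $\GL(2)$ correspond to $\varpi$ acting as $\alpha$, not $\tfrac{1}{2}\alpha$), the correct measure is $\tfrac{1}{2}\,ds$, and this is exactly the missing factor of two. For $\GL(2)$, where $t=\diag(a,1)$ after removing the center, the Jacobian is $1$ and you retain $\delta_G=2$. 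Your formula $\varpi(H_0(wu_y'tu))=-s-\log\|(1,x+yt_2/t_1)\|_{\A_F}$ is self-consistent with $\varpi=\tfrac{1}{2}\alpha$, but paired with the measure $ds$ it does not reproduce the paper's asymmetry between $\SL(2)$ and $\GL(2)$; once everything is tracked in $\log|a|_{\A_F}$ as the paper does, both values of $\delta_G$ drop out with no further argument, and no refinement over $y\in F$ is needed.
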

\begin{proof}
If $G=\SL(2)$, the integral on the left hand side of \eqref{eq:trunc:torus} equals
\[
 \int_{F^{\times}\backslash\A_F^\times} F(\left(\begin{smallmatrix}a& ax\\0&a^{-1}\end{smallmatrix}\right), T)\, d^\times a.
\]
On the other hand, for $G=\GL(2)$, the left hand side of \eqref{eq:trunc:torus} equals
\[
 \vol(F^{\times}\backslash\A_F^1)\int_{F^\times\backslash \A_F^\times} F(\left(\begin{smallmatrix}a& ax\\0&1\end{smallmatrix}\right), T)\, d^\times a
\]
since $F(\cdot, T)$ is invariant under the center of the group.

Let $w=\left(\begin{smallmatrix}0&1\\-1&0\end{smallmatrix}\right)$. Then $F(g, T)=1$ implies that 
\[
 \langle \varpi,H_0(g)-T\rangle\leq 0 \text{ and } \langle \varpi,H_0(wg)-T\rangle\leq 0.
\]
Hence $F(\left(\begin{smallmatrix}a& ax\\0&a^{-1}\end{smallmatrix}\right), T)=1$ implies that
\[
 e^{\varpi(T)}   \ge |a|_{\A_F}^2 \ge  \|(1,x)\|_{\A_F}^{-2}e^{-\varpi(T)}
\]
while $F(\left(\begin{smallmatrix}a& ax\\0&1\end{smallmatrix}\right), T)=1$ implies that
\[
 e^{\varpi(T)}  \ge |a|_{\A_F}\ge  \|(1,x)\|_{\A_F}^{-2} e^{-\varpi(T)}.
\]
Here $\|(1,x)\|_{\A_F}$ is defined as $\|(1,x)\|_{\A_F}=\prod_{v}\|(1,x_v)\|_{v}$ with
\[
\|(1,x)\|_v
=
\begin{cases}
\max\{1,|x|_v\}
&\text{if } v\text{ is non-archimedean},\\
\sqrt{1+x^2}
&\text{if }v=\R,\\
1+x\bar x
&\text{if }v=\C.
\end{cases}
\]
 Note that $\|(1,x)\|_{\A_F}\geq1$ for all $x$.
Hence  the left hand side of \eqref{eq:trunc:torus} is bounded from above by
\[
 \vol(F^{\times}\backslash\A_F^1) \int_{e^{-\varpi(T)/2}  \|(1,x)\|_{\A_F}^{-1} }^{ e^{\varpi(T)/2}}\, d^\times a
 = \vol(F^{\times}\backslash\A_F^1) \left( \varpi(T)+\log \|(1,x)\|_{\A_F}\right)
\]
if $G=\SL(2)$, and by
\[
  \vol(F^{\times}\backslash\A_F^1)^2 \int_{e^{-\varpi(T)} \|(1,x)\|_{\A_F}^{-2} }^{ e^{\varpi(T)} }\, d^\times a
  = \vol(F^{\times}\backslash\A_F^1)^2 \left(2\varpi(T) + 2\log \|(1,x)\|_{\A_F}\right)
\]
if $G=\GL(2)$. 
This proves the lemma.
\end{proof}

\begin{proof}[Proof of Proposition \ref{prop:split:contribution}]
We use the notation from the beginning of this section. After a change of variables $(\gamma_1-\gamma_2)x\mapsto x$ we get by Lemma \ref{lem:torus:int:truncation} 
that
\begin{multline*}
 \vol(T_0(F)\backslash T_0(\A_F)^1)^{-1}j_{\text{reg.split}}^{F,T}(f_F)\\
\leq  2\varpi(T)\sum_{\delta\in \Sigma_0'(F)_{\text{reg.split}}}
\int_{\A_F} f_F(\left(\begin{smallmatrix}\gamma_1&x\\0&\gamma_2\end{smallmatrix}\right))
\left(1+ \sum_{v\leq \infty}
\Big|\log\|(|\gamma_1-\gamma_2|_v,|x|_v)\|_v\Big| \right)\, dx.
\end{multline*}
For $\delta\in\Sigma_0'(F)_{\text{reg.split}}$ let $S_{\delta}$ be the finite set of places $v$ of $F$ with $|\gamma_1-\gamma_2|_v\neq1$ or $v$ archimedean. Let $S_{\delta,f}$ denote the set of all non-archimedean places contained in $S_{\delta}$. Then for every $v\not\in S_{\delta}$ we have
\[
 \int_{F_v}\One_{\cpt_v^F}( \left(\begin{smallmatrix}\gamma_1&x\\0&\gamma_2\end{smallmatrix}\right))
 \Big|\log\|(|\gamma_1-\gamma_2|_v,|x|_v)\|_v\Big| \,dx
=0.
 \]
If $v\in S_{\delta}$ is non-archimedean, then
\[
 \int_{F_v}\One_{\cpt_v^F}( \left(\begin{smallmatrix}\gamma_1&x\\0&\gamma_2\end{smallmatrix}\right))
 \Big|\log\|(|\gamma_1-\gamma_2|_v,|x|_v)\|_v\Big| \,dx
\leq \Big|\log|\gamma_1-\gamma_2|_v \Big| \vol(\OOO_{F_v}).
\]
Note that
\[
\sum_{v\in S_{\delta,f}}\Big|\log|\gamma_1-\gamma_2|_v\Big|
=\Big|\log|\gamma_1-\gamma_2|_{\A_{F,f}}\Big|
=\log|\gamma_1-\gamma_2|_{\signature}
\]
since $\gamma_1, \gamma_2\in\OOO_F$. 

Further, if $v$ is non-archimedean, we have
\[
\int_{F_v}\One_{\cpt_v^F}( \left(\begin{smallmatrix}\gamma_1&x\\0&\gamma_2\end{smallmatrix}\right))\,dx
=\begin{cases}
 \vol(\OOO_{F_v})				&\text{if } \gamma_1\gamma_2=\det\delta \in\OOO_{F_v}^\times,\\
 0						&\text{else.}
 \end{cases}
\]
Note that $(\gamma_1-\gamma_2)^2=(\tr\delta)^2-4\det\delta$ so that the terms $|\gamma_1-\gamma_2|_v$ for $v$ archimedean depend only on the set $\Sigma_0(F)_{\text{reg.split}}$ but not on the diagonal conjugacy class representatives.
Hence
\[
\frac{j_{\text{reg.split}}^{F,T}(f_F)}{\vol(\widehat{\OOO_F})  \vol(T_0(F)\backslash T_0(\A_F)^1)}\\
\]
 is bounded from above by 
 \begin{multline}\label{eq:final:reg:split}
c \varpi(T)  \sum_{\delta\in \Sigma_0'(F)_{\text{reg.split}}}  \left(1+\log|\gamma_1-\gamma_2|_{\signature}\right)\\
\cdot\int_{U_0(\R^{\signature})} f_{\infty}(\left(\begin{smallmatrix}\gamma_1&x\\0&\gamma_2\end{smallmatrix}\right))
\left(1+\sum_{v|\infty}\Big|\log\|(|\gamma_1-\gamma_2|_v,|x|_v)\|_v\Big|\right)\,dx
\end{multline}
for some constant $c>0$ independent of $F$ and $T$. Now all appearing quantities in this last sum and integral depend only on the polynomials in $\Sigma_0(F)_{\text{reg.split}}$, that is, only on the polynomials in this set, but not on the specific representatives for the attached conjugacy classes over $F$. Since $\Sigma_0(F)_{\text{reg.split}}$ is contained in the finite set $\Sigma_0$ (which is independent of $F$) and there are at most two $G(F)$-conjugacy classes in each regular split equivalence class, the right hand side in \eqref{eq:final:reg:split} can be bounded by a $C\varpi(T)$ with $C>0$ an absolute constant.
\end{proof}

\section{The unipotent contribution}\label{sec:unip:contr}
In this section we bound the final contribution in \eqref{eq:splitting}, namely the contribution from elements of the form $zu$ with $z$ central and $u\neq 1$ unipotent. More precisely we have the following:
\begin{proposition}\label{prop:unip:contr}
There exists a constant $c>0$ such that for all $F\in\F_{\signature}$ and all $T\in\aaa^+$ with $\alpha(T)\geq\rho\log D_F$ we have
\[
\frac{ j_{\text{unip}\smallsetminus Z}^{F,T}(f_F)}{\vol(T_0(F)\backslash T_0(\A_F)^1)  \vol(F\backslash\A_F)}
\leq c  \varpi(T).
\]
\end{proposition}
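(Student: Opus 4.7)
The plan is to unfold the sum over non-central unipotents onto a centralizer quotient, apply Iwasawa decomposition, reduce the truncation constraints to a simple lower bound on the resulting torus parameter, and then estimate the remaining integral via a partial-summation bound over integral ideals.

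First I unfold: every element of $\Sigma(F)_{\text{unip}}\setminus Z(F)$ is of the form $zu$ with $z$ in a finite central set $Z_0$ determined by $\Sigma_0$ and $u\neq 1$ unipotent, and the centralizer in $G(F)$ of any such element is $(ZU_0)(F)$. Parametrizing the $G(F)$-orbits by pairs $(z,[\xi_0])$, with $[\xi_0]$ running over $F^\times/(F^\times)^2$ for $G=\SL(2)$ and over a singleton for $G=\GL(2)$, I obtain
\[
j_{\text{unip}\setminus Z}^{F,T}(f_F) = \sum_{z,[\xi_0]}\int_{(ZU_0)(F)\backslash G(\A_F)^1} F(x,T)\, f_F(x^{-1}zu_{\xi_0}x)\, dx.
\]
Using Iwasawa $G(\A_F)^1 = U_0(\A_F)T_0(\A_F)^1 K$ and the identity $(u(y)tk)^{-1}zu_{\xi_0}(u(y)tk) = k^{-1}zu_{\xi_0\alpha(t)^{-1}}k$, the integrand depends on $t$ only through $\lambda = \xi_0\alpha(t)^{-1}\in\A_F^\times$. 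Combining the $[\xi_0]$-sum with the integration over $Z(F)\backslash T_0(\A_F)^1$ (which for $\GL(2)$ fibers over $\A_F^\times$ with fiber volume $\res_{s=1}\zeta_F(s)$, and for $\SL(2)$ gives via the squaring map a bijection onto $(\A_F^\times)^2$ that then fills out to $F^\times(\A_F^\times)^2\subseteq\A_F^\times$) produces the bound
\[
j_{\text{unip}\setminus Z}^{F,T}(f_F)\le c\,V_F\int_K\int_{\A_F^\times} |\lambda|_{\A_F}\, f_F(k^{-1}zu_\lambda k)\Bigl(\int_{F\backslash\A_F} F(u(x)t(\lambda)k,T)\,dx\Bigr) d^\times\lambda\, dk,
\]
with $V_F\le\vol(T_0(F)\backslash T_0(\A_F)^1)/\res_{s=1}\zeta_F(s)$ in both cases.

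Next, I analyze the truncation. A direct computation of $H_0(u(x)t(\lambda)k)$ and $H_0(wu_\xi\cdot u(x)t(\lambda)k)$ via Iwasawa shows that $F(u(x)t(\lambda)k,T)=1$ forces $|\lambda|_{\A_F}\ge e^{-2\varpi(T)}$ (from $\delta=1$) and $\|(\lambda^{-1},x+\xi)\|_{\A_F}\ge |\lambda|^{-1/2}_{\A_F}e^{-\varpi(T)}$ for every $\xi\in F$ (from $\delta=wu_\xi$). Since $\|(\lambda^{-1},y)\|_{\A_F}\ge|\lambda|^{-1}_{\A_F}$ at every place, the second condition is automatic whenever $|\lambda|_{\A_F}\le e^{2\varpi(T)}$. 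The support of $f_F$ forces $\lambda_f\in\widehat{\OOO_F}$ and $|\lambda|_{\A_F}\le C_{f_\infty}$, while the regularity assumption $\alpha(T)\ge\rho\log D_F$ makes $e^{2\varpi(T)}\ge C_{f_\infty}$ once $D_F$ is large enough, so on the support of the integrand the $x$-integral is at most $\vol(F\backslash\A_F)=1$ and the only remaining truncation constraint is $|\lambda|_{\A_F}\ge e^{-2\varpi(T)}$.

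Finally, the $\lambda$-integral factors into an archimedean part bounded by a constant depending only on $f_\infty$, and a non-archimedean part. Writing $\lambda_f\in\widehat{\OOO_F}\cap\A_{F,f}^\times$ according to the integral ideal $\mathfrak m\subseteq\OOO_F$ that it generates and using $\vol(\widehat{\OOO_F}^\times)=D_F^{-1/2}$, the non-archimedean integral equals
\[
D_F^{-1/2}\sum_{\substack{\mathfrak m\subseteq\OOO_F\\ \N\mathfrak m\le C_{f_\infty}e^{2\varpi(T)}}}\N(\mathfrak m)^{-1},
\]
which by the Landau estimate $\sum_{\N\mathfrak m\le X}1\ll_d X\res_{s=1}\zeta_F(s)$ together with partial summation is $\ll_d D_F^{-1/2}\res_{s=1}\zeta_F(s)\varpi(T)$ (using $\varpi(T)\ge(\rho/2)\log D_F$). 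Combining the three pieces and dividing by $\vol(T_0(F)\backslash T_0(\A_F)^1)\vol(F\backslash\A_F)$ gives an overall bound $\ll_d D_F^{-1/2}\res_{s=1}\zeta_F(s)\,\varpi(T)$, which is $\ll_d\varpi(T)$ by \eqref{eq:upper:bound:residue} and \eqref{eq:lower:bound:regulator}. The main technical obstacle will be the careful bookkeeping of the measure factors in the change of variables $t\mapsto\lambda$, and in particular the $\SL(2)$ case where the infinite orbit sum over $F^\times/(F^\times)^2$ must combine correctly with the kernel of the squaring map so that the total $\lambda$-integration extends only over a subset of $\A_F^\times$.
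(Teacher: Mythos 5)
Your overall framework — reduce via Iwasawa, analyze which truncation conditions are active, and then estimate the remaining integral — matches the paper, but you take a genuinely different route at the crucial counting step, and that step has a gap.

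The paper does \emph{not} fully unfold the $F^\times$-sum onto $\A_F^\times$. Instead it keeps $\sum_{x\in F^\times}f_F(t(b)^{-1}u(x)t(b))$ as a lattice sum, bounds it pointwise in $b$ via the first successive minimum $\lambda_1(\Lambda_{b^{-\beta_G}})\ge |b^{-\beta_G}|_{\A_F}^{1/d}$ (the AM--GM argument in Lemma~\ref{lem:first:minimum}) and the Betke--Henk--Wills lattice-point inequality (Lemma~\ref{lem:number:lattice:pts}), obtaining $\le c\,|b|_{\A_F}^{\beta_G}=c\,\delta_0(t(b))$. After this cancels $\delta_0(t)^{-1}$, the integrand is a constant and the $a$-integral over the truncation window of log-length $\ll\varpi(T)$ immediately gives the linear bound, uniformly in $F$, with no appeal to arithmetic input beyond Zimmert's regulator bound in the subsequent division step.

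You instead unfold fully and arrive at an ideal count $\sum_{\N\mathfrak m\le X}\N(\mathfrak m)^{-1}$. The estimate you invoke, $\sum_{\N\mathfrak m\le X}1\ll_d X\res_{s=1}\zeta_F(s)$, does \emph{not} hold uniformly in $X\ge1$ and $F$: for $X$ of size $O(1)$ the left-hand side is $\ge 1$ while the right-hand side can be as small as $O(D_F^{-1/2}\log D_F)$, so the estimate fails precisely in the range $1\le t\ll D_F$ that the partial-summation integral $\int_1^X N(t)t^{-2}\,dt$ must cover. Replacing it by what is provable uniformly, namely $\sum_{\N\mathfrak m\le X}\N(\mathfrak m)^{-1}\le\prod_{\N\ppp\le X}(1-\N\ppp^{-1})^{-1}\ll_d(\log X)^d$, gives $\ll_d\varpi(T)^d$ in place of $\ll_d\varpi(T)$, which is strictly weaker than the statement of the proposition (though it would still, at the cost of worse logarithmic factors in the interpolation step, suffice for the final application in Proposition~\ref{prop:upper:bound}). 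Rescuing the linear-in-$\varpi(T)$ bound along your route would require a uniform estimate such as $\zeta_F(1+1/\log X)\ll_d \res_{s=1}\zeta_F(s)\log X+\log D_F$, i.e.\ a bound on the Euler--Stieltjes constant of $\zeta_F$, which is not elementary. Finally, the $\SL(2)$ measure bookkeeping you defer — reconciling the infinite sum over $F^\times/(F^\times)^2$, the two-to-one squaring map on $\{\pm1\}\backslash\A_F^1$, the Hasse principle for global squares, and the resulting integration over $F^\times(\A_F^1)^2\subsetneq\A_F^\times$ — is indeed nontrivial; the paper avoids it entirely because it never passes to the orbit parametrization you use.
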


We need some notation and auxiliary results before proving this lemma.

First note that there exist finitely many $z_1, \ldots, z_t\in \bar\Q$ such that for any $F\in\F_{\signature}$ we have 
\[
 \Sigma_0(F)_{\text{unip}}=\begin{cases}
                            \{(X-z)^2\mid z\in \{z_1, \ldots, z_t\}\cap F\}			&\text{if } G=\GL(2),\\
                            \{(X-z)^2\mid z\in \{z_1,\ldots, z_t\}\cap \{1, -1\}\}		&\text{if } G=\SL(2).
                           \end{cases}
\]
Accordingly,
\[
 \Sigma(F)_{\text{unip}}=\{ z u(x)\mid z\in \{z_1, \ldots, z_t\}\cap F, \, x\in F\}
\]
if $G=\GL(2)$, where $u(x)=\left(\begin{smallmatrix} 1&x\\0&1\end{smallmatrix}\right)$, and 
\[
 \Sigma(F)_{\text{unip}}=\{ z u(x)\mid z\in \{z_1, \ldots, z_t\}\cap \{1, -1\}, \, x\in F\}
\]
if $G=\SL(2)$.

If $x\neq0$, the centralizer $G_{zu(x)}$ of $zu(x)$ in $G$ equals $ZU_0$. For $a\in\A_F^{\times}$ we set 
\[
t(a)=
\begin{cases}
\diag(a,1)					&\text{if }G=\GL(2),\\
\diag(a, a^{-1}) 				&\text{if }G=\SL(2).
\end{cases}
\]
Write $\nu(Z):=\vol(Z(F)\backslash Z(\A_F)^1)$. 
We can then write the contribution from the unipotent but non-central elements as
\begin{align*}
& j_{\text{unip}\smallsetminus Z}^{F,T}(f_F)\\
& = \sum_{z} \int_{T_0(F)A_G\backslash T_0(\A_F)}\delta_0(t)^{-1}
 \sum_{x\in F^{\times}} f_F(z t^{-1}u(x)t)
\int_{U_0(F)\backslash U_0(\A_F)} F(vt,T) \,dv\,dt\\
& = \sum_{z} \nu(Z) \int_{F^{\times}\backslash\A_F^{\times}} \delta_0(t)^{-1}
\sum_{x\in F^{\times}} f_F(z t(a)^{-1}u(x)t(a))
\int_{U_0(F)\backslash U_0(\A_F)} F(vt(a),T) \,dv\,d^{\times}a,
\end{align*}
where the sum runs over all $z\in\{z_1,\ldots, z_t\}\cap F$ if $G=\GL(2)$ and over $z\in\{z_1,\ldots, z_t\}\cap\{1, -1\}$ if $G=\SL(2)$. Since the sum over the $z$ is finite, we can ignore $z$ in the following and just find an upper bound for
\begin{equation}\label{eq:int:unip:summand}
 \int_{F^{\times}\backslash\A_F^{\times}} \delta_0(t)^{-1}
\sum_{x\in F^{\times}} f_F(t(a)^{-1}u(x)t(a))
\int_{U_0(F)\backslash U_0(\A_F)} F(vt(a),T) \,dv\,d^{\times}a.
\end{equation}

Suppose $a=a_\infty a_f\in\A_F^\times$, $a_\infty\in F_\infty^\times$, $a_f\in\A_{F,f}^\times$. Let $\phi:\A_F\longrightarrow\infty$ be a Schwartz Bruhat function such that $\phi=\phi_{\infty}\cdot\phi_f$ with $\phi_f:\A_{F,f}\longrightarrow\C$ the characteristic function of $\widehat{\OOO_F}=\bigotimes_{v<\infty} \OOO_{F_v}$, and $\phi_\infty$ a smooth function on $F_{\infty}=\R^{\signature}$ with support in a ball $\{x\in\R^{\signature}\mid\|x\|_{\signature}\le R\}$. Consider the sum
\[
 \sum_{x\in F^{\times}} \phi(ax).
\]
We want to find an upper bound for it so that we can apply it to $\phi(x)=f_F(u(x))$. For $\phi(ax)$ to be non-zero we need that $a_fx_f\in \widehat{\OOO_F}$, that is, $x_f\in a_f^{-1}\widehat{\OOO_F}\cap F=:\Lambda_a'$. $\Lambda_a'$ is a fractional ideal in $F$ and by replacing $a$ by some suitable element in $a F^{\times}$ we can assume that $\Lambda_a'$ is an integral ideal in $\OOO_F$. The norm of this ideal is 
\[
 \N(\Lambda_a')= \left|\OOO_F/\Lambda_a'\right| = |a_f|_f.
\]
Put $\Lambda_a:=a_{\infty}\Lambda_a'$. 
This is a lattice in $\R^{\signature}$. $\|\cdot\|_{\signature}^2$ defines a positive definite quadratic form on $\R^{\signature}$. Let $\lambda_1(\Lambda_a)$ denote the first successive minimum of the lattice $\Lambda_a$ with respect to the form $\|\cdot\|_{\signature}^2$.

\begin{lemma}\label{lem:first:minimum}
 For any $a$ as above we have
 \[
  \lambda_1(\Lambda_a)\ge |a|_{\A_F}^{1/d}.
 \]
\end{lemma}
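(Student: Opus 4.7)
The plan is to show that any nonzero vector in $\Lambda_a$ must have large $\|\cdot\|_{\signature}$-norm, using the product formula together with the AM--GM inequality.

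First, take any nonzero $y \in \Lambda_a$ and write $y = a_\infty x$ with $x \in \Lambda_a' \smallsetminus\{0\} \subseteq F^\times$. Since $\Lambda_a' \subseteq a_f^{-1}\widehat{\OOO_F}$, at every non-archimedean place $w$ we have $a_{f,w} x_w \in \OOO_{F_w}$, hence $|x|_w \le |a_{f,w}|_w^{-1}$. Multiplying over all finite places gives $\prod_{w<\infty} |x|_w \le |a_f|_f^{-1}$, and the product formula $\prod_v |x|_v = 1$ for $x\in F^\times$ yields $|x|_\infty \ge |a_f|_f$. Consequently
\[
|y|_\infty = |a_\infty|_\infty \, |x|_\infty \ge |a_\infty|_\infty \, |a_f|_f = |a|_{\A_F}.
\]

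Second, I would interpret $|y|_\infty$ as $|N_{F/\Q}(y)|$ using the embeddings $\sigma_1,\dots,\sigma_{r_1}$ (real) and $\tau_1,\dots,\tau_{r_2}$ (complex, chosen modulo conjugation), so that
\[
|y|_\infty = \prod_{i=1}^{r_1} |\sigma_i(y)| \cdot \prod_{j=1}^{r_2} |\tau_j(y)|^2.
\]
Correspondingly,
\[
\|y\|_{\signature}^2 = \sum_{i=1}^{r_1} \sigma_i(y)^2 + \sum_{j=1}^{r_2} 2|\tau_j(y)|^2,
\]
which I split as $r_1 + 2r_2 = d$ nonnegative summands (writing each complex contribution $2|\tau_j(y)|^2$ as $|\tau_j(y)|^2 + |\tau_j(y)|^2$).

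Finally, applying the arithmetic--geometric mean inequality to these $d$ terms gives
\[
\frac{\|y\|_{\signature}^2}{d} \ge \left(\prod_{i=1}^{r_1} \sigma_i(y)^2 \cdot \prod_{j=1}^{r_2} |\tau_j(y)|^4\right)^{1/d} = |N_{F/\Q}(y)|^{2/d} = |y|_\infty^{2/d} \ge |a|_{\A_F}^{2/d}.
\]
Hence $\|y\|_{\signature} \ge \sqrt{d}\, |a|_{\A_F}^{1/d} \ge |a|_{\A_F}^{1/d}$, and taking the infimum over nonzero $y \in \Lambda_a$ yields the claim. There is no real obstacle here; the only care needed is to match the slightly unusual normalizations of $|\cdot|_v$ at complex places (where $|z|_v = z\bar z$) with the Euclidean norm $\|\cdot\|_{\signature}$ appearing in the definition of $\lambda_1(\Lambda_a)$.
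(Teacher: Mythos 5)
Your argument is correct and follows essentially the same route as the paper's: the product formula (together with $\Lambda_a'\subseteq a_f^{-1}\widehat{\OOO_F}$, which forces $|x|_f\le|a_f|_f^{-1}$) gives $|y|_\infty\ge|a|_{\A_F}$, and then the arithmetic--geometric mean inequality on the $d$ coordinates (with the complex ones counted twice) converts this into a lower bound on $\|y\|_{\signature}$. Your write-up is in fact a bit more careful than the paper's at the AM--GM step, recording the extra factor $\sqrt{d}$; the only cosmetic slip is the phrase ``interpret $|y|_\infty$ as $|N_{F/\Q}(y)|$'', which is an abuse since $y=a_\infty x$ need not lie in $F$ --- but the coordinate formula $|y|_\infty=\prod_i|\sigma_i(y)|\prod_j|\tau_j(y)|^2$ that you actually use is valid for any $y\in\R^{\signature}$.
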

\begin{proof}
 By the arithmetic geometric mean inequality we have for any $y\in\Lambda_a$, $y\neq 0$ that
 \[
  \|y\|_{\signature}^2\ge |y|_\infty^{1/d}.
   \]
Write $y=a_\infty y'$ with $y'\in\Lambda_a'$, $y'\neq0$. Then 
\[
|y|_\infty^{1/d}
 = |a_\infty|_\infty^{1/d} |y'|_\infty^{1/d} 
 = |a_\infty|_\infty^{1/d} |y'|_f^{-1/d} 
 \ge |a_\infty|_\infty^{1/d} |a_f|_f^{1/d} 
 = |a|_{\A_F}^{1/d}
\]
so that $\lambda_1(\Lambda_a)\ge |a|_{\A_F}^{1/d}$ as claimed.
\end{proof}

\begin{lemma}\label{lem:number:lattice:pts}
Let  $R>0$ be fixed.
There exist constants $c, C>0$ depending only on $d$ and $R$ such that for every $F\in\F_{\signature}$ and every $a\in F^{\times}\backslash \A_F^{\times}$ we have
\begin{equation}\label{eq:number:lattice:pts}
\left|\{X\in \Lambda_a \smallsetminus\{0\} \mid \|X\|_{\signature}\leq R\}\right|
\begin{cases}
=0
&\text{if } |a|_{\A_F} >C,\\
\leq c |a|_{\A_F}^{-1}
&\text{else.}
\end{cases}
\end{equation}
\end{lemma}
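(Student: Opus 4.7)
The plan is to reduce both claims to the first-successive-minimum bound $\lambda_1(\Lambda_a) \ge |a|_{\A_F}^{1/d}$ supplied by Lemma~\ref{lem:first:minimum}, followed by a standard packing argument. For the vanishing part, if $|a|_{\A_F} > R^d$ then $\lambda_1(\Lambda_a) > R$, so no nonzero vector of $\Lambda_a$ has $\|\cdot\|_\signature$-norm at most $R$; hence $C := R^d$ handles this case.

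For the counting estimate, I would carry out the following Blichfeldt-type argument. The $\|\cdot\|_\signature$-unit ball in $\R^{\signature}$ has some fixed positive Lebesgue volume $V_d$ depending only on $d$, because $\|\cdot\|_\signature$ differs from the standard Euclidean norm on $\R^d$ only by rescaling the coordinates of complex places by $\sqrt{2}$. Since $\|y_1 - y_2\|_\signature \ge \lambda_1(\Lambda_a)$ for any two distinct points $y_1, y_2 \in \Lambda_a$, the translates $y + \{x \in \R^\signature : \|x\|_\signature \le \lambda_1(\Lambda_a)/2\}$, indexed by $y \in \Lambda_a \cap B(0,R)$, are pairwise disjoint and all contained in the enlarged ball of radius $R + \lambda_1(\Lambda_a)/2$. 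Comparing Lebesgue volumes yields
\[
|\Lambda_a \cap B(0,R)| \le \left(1 + \frac{2R}{\lambda_1(\Lambda_a)}\right)^d \le \left(1 + \frac{2R}{|a|_{\A_F}^{1/d}}\right)^d.
\]
In the remaining range $|a|_{\A_F} \le C = R^d$ one has $|a|_{\A_F}^{1/d} \le R$, so the right-hand side is at most $(3R/|a|_{\A_F}^{1/d})^d = 3^d R^d |a|_{\A_F}^{-1}$, giving the second bound with $c := 3^d R^d$.

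Since all the arithmetic content is packaged into the preceding lemma, there is no real obstacle here: the only small technical point is to check that $\Lambda_a$ is genuinely a full-rank lattice in $\R^\signature$, which follows because $\Lambda_a' \subset \OOO_F$ is a nonzero integral ideal (hence of finite index in the rank-$d$ lattice $\OOO_F$) and $a_\infty \in (\R^\signature)^\times$ acts as an $\R$-linear automorphism. The constants $c$ and $C$ so produced depend only on $d$ and $R$, as required, and in particular are uniform in $F \in \F_{\signature}$.
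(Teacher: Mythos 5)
Your packing argument is a perfectly reasonable elementary substitute for the paper's appeal to \cite[Theorem~2.1]{BeHeWi93}, but as written it misreads the normalization of $\lambda_1(\Lambda_a)$ from Lemma~\ref{lem:first:minimum}, and this propagates to genuine errors in the intermediate inequalities.

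In the paper, $\lambda_1(\Lambda_a)$ is defined as the first successive minimum with respect to the \emph{quadratic form} $\|\cdot\|_{\signature}^2$, and the proof of Lemma~\ref{lem:first:minimum} establishes $\|y\|_{\signature}^2\ge|a|_{\A_F}^{1/d}$ for nonzero $y\in\Lambda_a$, i.e.\ $\lambda_1(\Lambda_a)=\min_{y\ne 0}\|y\|_{\signature}^2$. (This matches how the paper then applies \cite{BeHeWi93}: the vanishing threshold is $\lambda_1>R^2$, and the count is bounded via $2R^2/\lambda_1$, both of which carry the factor $R^2$ rather than $R$.) You, however, treat $\lambda_1$ as the minimum of $\|\cdot\|_{\signature}$ itself. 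As a result two steps are wrong: the assertion ``if $|a|_{\A_F}>R^d$ then $\lambda_1>R$ hence no nonzero vector of norm $\le R$'' should require $\lambda_1>R^2$ (equivalently $|a|_{\A_F}>R^{2d}$), so $C=R^d$ is too small; and the separation bound $\|y_1-y_2\|_{\signature}\ge\lambda_1(\Lambda_a)$ should read $\|y_1-y_2\|_{\signature}\ge\sqrt{\lambda_1(\Lambda_a)}$, since $y_1-y_2$ is a nonzero lattice vector and $\lambda_1$ measures squared norms. With the $\lambda_1$ you actually have from Lemma~\ref{lem:first:minimum}, your claim $\|y_1-y_2\|_{\signature}\ge\lambda_1(\Lambda_a)$ is simply not available.

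The good news is that your argument survives the correction. Replacing $\lambda_1$ by $\sqrt{\lambda_1}$ throughout, the packing bound becomes
\[
\bigl|\Lambda_a\cap B(0,R)\bigr|\le\left(1+\frac{2R}{\sqrt{\lambda_1(\Lambda_a)}}\right)^d\le\left(1+2R\,|a|_{\A_F}^{-1/(2d)}\right)^d,
\]
and the vanishing threshold is $C=R^{2d}$. For $|a|_{\A_F}\le R^{2d}$ one has $|a|_{\A_F}^{1/(2d)}\le R$, so the right-hand side is at most $3^dR^d|a|_{\A_F}^{-1/2}$; since $|a|_{\A_F}^{-1/2}\le R^d|a|_{\A_F}^{-1}$ on that range, this gives the claimed $c|a|_{\A_F}^{-1}$ bound with $c=3^dR^{2d}$. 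So the conclusion of the lemma is reached; the only genuine defect is the normalization slip, which makes the stated $C=R^d$ and the inequality $\|y_1-y_2\|_\signature\ge\lambda_1$ incorrect as written. The remark that $\Lambda_a$ is a full lattice, and that the unit ball of $\|\cdot\|_{\signature}$ has positive volume (so the constant cancels), are both fine.
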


\begin{proof}
We use \cite[Theorem 2.1]{BeHeWi93} to estimate the number of points in the set in \eqref{eq:number:lattice:pts}. We obtain
\[
\left|\{X\in \Lambda_a\smallsetminus\{0\} \mid \|X\|_{\signature}\leq R\}\right|
\begin{cases}
=0
&\text{if }\lambda_1(\Lambda_a)> R^2,\\
\leq \left(\left\lfloor \frac{2}{R^{-2}\lambda_1(\Lambda_a)}+1\right\rfloor\right)^d
&\text{if }\lambda_1(\Lambda_a)\leq R^2.
\end{cases}
\]
The case $\lambda_1(\Lambda_a)\leq R^2$ can be bounded by
\[
(2R)^{2d} \lambda_1(\Lambda_a)^{-d}
\]
By Lemma \ref{lem:first:minimum} we have $\lambda_1(\Lambda_a)\ge |a|_{\A_F}^{1/d}$. Hence the left hand side of \eqref{eq:number:lattice:pts} is empty if $|a|_{\A_F}> R^{2d}$, and bounded by $(2R)^{2d} |a|_{\A_F}^{-1}$ in all other cases.
\end{proof}

\begin{cor}
There exist constants $c,C>0$ depending only on $d$ and $f_{\infty}$ such that
\[
\sum_{x\in F^\times} f_F(t(b)^{-1} u(x) t(b))
\begin{cases}
=0
&\text{if }|b|_{\A_F}^{\beta_G}< C,\\
\leq c |b|_{\A_F}^{\beta_G}
&\text{else,}
\end{cases}
\]
for any $b\in \A_F^\times$. Here $\beta_{\SL(2)}=2$  and $\beta_{\GL(2)}=1$. 
\end{cor}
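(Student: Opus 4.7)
The plan is to reduce the corollary directly to Lemma~\ref{lem:number:lattice:pts} by transporting the torus conjugation to a dilation inside the unipotent subgroup. A short matrix computation shows $t(b)^{-1} u(x) t(b) = u(b^{-\beta_G} x)$ in both cases (with $\beta_G=1$ for $\GL(2)$ because $t(b)=\diag(b,1)$, and $\beta_G=2$ for $\SL(2)$ because $t(b)=\diag(b,b^{-1})$). Setting $a := b^{-\beta_G} \in \A_F^\times$, the sum in question becomes $\sum_{x\in F^\times} f_F(u(ax))$, and since $|a|_{\A_F} = |b|_{\A_F}^{-\beta_G}$, any bound of the form $c |a|_{\A_F}^{-1}$ is the same as a bound of the form $c |b|_{\A_F}^{\beta_G}$.

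Next I would factor $f_F(u(ax)) = f_\infty(u(a_\infty x))\,\One_{\cpt_f^F}(u(a_f x))$. The finite factor is nonzero exactly when $a_f x \in \widehat{\OOO_F}$, i.e.\ when $x \in a_f^{-1}\widehat{\OOO_F} \cap F = \Lambda_a'$ in the notation introduced immediately before Lemma~\ref{lem:first:minimum}. The archimedean factor is bounded by $\|f_\infty\|_\infty$ and vanishes unless $\|a_\infty x\|_\signature \le R$, where $R$ is the support radius fixed in \S\ref{sec:test:fcts}. Equivalently, $a_\infty x$ must lie in $(\Lambda_a \setminus\{0\}) \cap \{X : \|X\|_\signature \le R\}$, and therefore
\[
 \sum_{x\in F^\times} f_F\!\left(t(b)^{-1} u(x) t(b)\right) \le \|f_\infty\|_\infty \cdot \bigl|\{X \in \Lambda_a \setminus\{0\} : \|X\|_\signature \le R\}\bigr|.
\]

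The proof then concludes by applying Lemma~\ref{lem:number:lattice:pts} with this same $R$: it furnishes constants $c, C>0$ depending only on $d$ and $R$ (hence only on $d$ and $f_\infty$) such that the right-hand cardinality vanishes when $|a|_{\A_F} > C$, i.e.\ when $|b|_{\A_F}^{\beta_G} < 1/C$, and is at most $c|a|_{\A_F}^{-1} = c|b|_{\A_F}^{\beta_G}$ otherwise. I do not anticipate a serious obstacle; the only point requiring attention is that everything is invariant under replacing $a$ by an $F^\times$-multiple, which is built into the definition of $\Lambda_a'$ (used to make it an integral ideal) and is automatic for $|a|_{\A_F}$ by the product formula, so the invocation of Lemma~\ref{lem:number:lattice:pts} is well-defined on the idele class $F^\times \backslash \A_F^\times$.
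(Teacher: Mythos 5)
Your proposal is correct and takes essentially the same route as the paper: compute $t(b)^{-1}u(x)t(b)=u(b^{-\beta_G}x)$, factor $f_F$ into archimedean and finite parts so the finite part restricts $x$ to the lattice $\Lambda_a'$ with $a=b^{-\beta_G}$ and the archimedean support confines $a_\infty x$ to a ball of radius $R$, then invoke Lemma~\ref{lem:number:lattice:pts} together with $|a|_{\A_F}=|b|_{\A_F}^{-\beta_G}$. Your write-up is in fact a touch more explicit than the paper's (which compresses the lattice identification into a single displayed equality before citing the lemma), but the content is identical.
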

\begin{proof}
Note that 
\[
\sum_{x\in F^\times} f_F(t(b)^{-1} u(x) t(b))
=\sum_{x\in \Lambda_{b^{-1}}'} f_{\infty} (u(b_\infty^{-\beta_G} x) ).
\]
Recall that $f_{\infty}$ is supported in the compact set
$\{A=(A_{ij})_{i,j}\in\Mat_{2\times 2}(\R^{\signature})\mid\forall i,j:~\|A_{ij}-\delta_{ij}\|_{\signature}\leq R\}$,
where $\delta_{ij}$ denotes the Kronecker delta. Hence we can apply Lemma \ref{lem:number:lattice:pts} with $a=b^{-2}$ if $G=\SL(2)$ and with $a=b^{-1}$ if $G=\GL(2)$. 
\end{proof}

Note that with $\beta_G$ as in the corollary we have $|b|_{\A_F}^{\beta_G}=\delta_0(t(b))$. Therefore this corollary gives a constant $c>0$ such that \eqref{eq:int:unip:summand} is bounded from above by a constant multiple of 
\[
\int_{c}^{\infty} \int_{F^\times\backslash\A_F^1}\int_{U_0(F)\backslash U_0(\A_F)} F(vt(ab),T) \,dv\,d^\times b\, d^{\times}a
\]
Now, similarly as in the regular split case, the condition $F(v t(ab), T)=1$ gives upper and lower bounds on $a$ and $b$ in terms of $v$. More precisely, writing $v=u(y)$ with $y\in F\backslash \A_F$, we must have 
\[
 e^{\varpi(T)}  \ge |ab|_{\A_F}^2 \ge \|(1, (ab)^{-2} y)\|_{\A_F}^{-1}  e^{-\varpi(T)}
\]
if $G=\SL(2)$, and 
\[
 e^{\varpi(T)}  \ge |ab|_{\A_F} \ge \|(1,(ab)^{-1}y)\|_{\A_F}^{-1} e^{-\varpi(T)}. 
\]
if $G=\GL(2)$. In particular, $e^{-\varpi(T)}\le |ab|_{\A_f}^m\le e^{\varpi(T)}$ in both cases with $m$ as in above corollary.
Hence \eqref{eq:int:unip:summand} is bounded from above by a constant multiple of 
\[
\vol(F^\times\backslash\A_F^1)\vol(F\backslash \A_F)  \int_{c}^{e^{\varpi(T)}} \, d^{\times}a
\le \vol(F^\times\backslash\A_F^1)\vol(F\backslash \A_F) c_2 \varpi(T)
\]
for some constant $c_2>0$ depending only on $d$ and $f_{\infty}$. 
Finally using $\nu(Z)\vol(F^\times\backslash \A_F^1)=\vol(T_0(F)\backslash T_0(\A_F)^1)$ the proof of Proposition \ref{prop:unip:contr} is finished.

\section{Proof of Lemma \ref{lem:upper:bound:T} and Proposition \ref{prop:upper:bound}}\label{sec:proof}

\begin{proof}[Proof of Lemma \ref{lem:upper:bound:T}]
By Proposition \ref{prop:split:contribution} and Lemma \ref{lem:quot:meas} we have  
\[
\frac{j_{\text{reg.split}}^{F,T}(f_F)}{\vol(G(F)\backslash G(\A_F)^1)}
\ll_{d, f_\infty}  D_F^{-1} (\log D_F)^{d-1} \varpi(T)
\]
for all $F\in\F_{\signature}$  and  all $T\in\aaa^+$ with $\alpha(T)\geq \rho\log D_F$.

For the non-central unipotent contribution we similarly find by Proposition \ref{prop:unip:contr} and Lemma \ref{lem:quot:meas} that
\[
\frac{ j_{\text{unip}\smallsetminus Z}^{F,T}(f_F)}{\vol(G(F)\backslash G(\A_F)^1)}
\ll_{d, f_\infty} D_F^{-1/2} (\log D_F)^{d-1} \varpi(T).
\]

For the contribution from the regular elliptic part consider first $G=\GL(2)$: We bound the volume of $G(F)\backslash G(\A_F)^1$ from below as follows:
By our normalization of measures and the class number formula we have
\begin{multline*}
\vol(G(F)\backslash G(\A_F)^1)
= D_F^{1/2} \res_{s=1} \zeta_F(s) \zeta_F(2)
= D_F^{1/2}\zeta_F(2) \frac{2^{r_1}(2\pi)^{r_2} h_F R_F}{w_F D_F^{1/2}}\\
\geq \frac{2^{r_1}(2\pi)^{r_2} h_F R_F}{w_F }.
\end{multline*}
Using the lower bound for the regulator \eqref{eq:lower:bound:regulator} we have $ \vol(G(F)\backslash G(\A_F)^1)\gg_d 1$. 
Hence combining with Corollary \ref{cor:reg:ell}, we get 
\[
\frac{ j_{\text{reg.ell.}}^{F,T}(f_F)}{\vol(G(F)\backslash G(\A_F)^1)}
\ll_{d,f_\infty} D_F^{-\frac{1}{2}} (\log D_F)^{2d}
\]
for all $F\in\F_{\signature}$, and 
 all $T\in\aaa^+$ with $\alpha(T)\geq\rho\log D_F$.
This finishes the proof of estimate \eqref{eq:upper:bound:T} for $G=\GL(2)$.

For $G=\SL(2)$ we have
\[
\vol(G(F)\backslash G(\A_F)^1)= D_F^{\frac{1}{2}}\zeta_F(2) \geq D_F^{\frac{1}{2}}.
\]
Together with Corollary \ref{cor:reg:ell} we get 
\[
\frac{ j_{\text{reg.ell.}}^{F,T}(f_F)}{\vol(G(F)\backslash G(\A_F)^1)}
\ll_{d, f_\infty} D_F^{-\frac{1}{2}} (\log D_F)^{2d}
\]
for all $F\in\F_{\signature}$. This finishes the proof of the assertion for $G=\SL(2)$.
\end{proof}

Recall  that $\Jgeom^F(f_F)$ as well as $J_{\text{geom}\smallsetminus Z}^F(f_F)$ are the values at $T=0$, i.e., the constant terms, of certain polynomials $\Jgeom^{F,T}(f_F)$ and $J_{\text{geom}\smallsetminus Z}^{F,T}(f_F)$ of degree $1$ (see, for example, \cite[\S 9]{Ar05}). 

The polynomials can  be approximated by $j^{F,T}(f_F)$ and $j_{G\smallsetminus Z}^{F,T}(f_F)$ as follows:
\begin{lemma}\label{lem:approx}
 There exist constants $c_1,c_2, \eps >0$ depending only on $f_{\infty}$ and $d$ such that for all $F\in\F_{\signature}$ we have
\[
 \left|\Jgeom^{F,T}(f_F)-j^{F,T}(f_F)\right|
\leq c_1 D_F^{-1} e^{-\alpha(T)/2} \varpi(T)
\]
and
\[
  \left|J_{\text{geom}\smallsetminus Z}^{F,T}(f_F)-j_{G\smallsetminus Z}^{F,T}(f_F)\right|
\leq c_1 D_F^{-1} e^{-\eps \alpha(T)} \varpi(T)
\]
for all $T\in\aaa^+$ with $\alpha(T)\geq c_2\log D_F$.
\end{lemma}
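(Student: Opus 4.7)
The plan is to revisit Arthur's proof of Theorem 2 in \cite{Ar79} in the rank-one setting and make every constant explicit in the field $F$. For $G=\SL(2)$ or $\GL(2)$, the only proper standard parabolic up to conjugacy is the Borel $B=P_0=T_0U_0$, so Arthur's identity expressing $k^{F,T}(x,f_F)-F(x,T)k^{F}(x,f_F)$ via parabolic truncations collapses to a single term: schematically,
\[
 \Jgeom^{F,T}(f_F)-j^{F,T}(f_F) = -\int_{B(F)\backslash G(\A_F)^1}\Phi^T(x)\,k_B(x,f_F)\,dx,
\]
where $k_B(x,f_F)=\sum_{\gamma\in M_B(F)}\int_{U_0(\A_F)}f_F(x^{-1}\gamma u x)\,du$ is the constant-term kernel and $\Phi^T(x)$ is a characteristic function of a region where $\varpi(H_0(x)-T)$ has the opposite sign to that which appears inside $F(\cdot,T)$. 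The analogous statement for $J_{\text{geom}\smallsetminus Z}^{F,T}$ and $j_{G\smallsetminus Z}^{F,T}$ follows by subtracting the (common) contribution of $z\in Z(F)$, which is a polynomial in $T$ of the same form on both sides.

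Next I would decompose the sum in $k_B$ along $M_B(F)=T_0(F)=Z(F)\sqcup Z(F)\cdot(\text{split regular})$ (only split elements occur in the Levi), and further split off the unipotent part $\gamma u$ with $\gamma\in Z(F)$ and $u\neq 1$. This produces two kinds of integrals that are completely analogous to those treated in Sections 5 and 7: a split-regular piece handled as in Proposition \ref{prop:split:contribution} and a non-central unipotent piece handled as in Proposition \ref{prop:unip:contr}. The crucial difference is that in the truncation function used in those sections, the toral variable $a\in F^\times\backslash\A_F^\times$ was forced to lie in $|a|_{\A_F}\le e^{c\varpi(T)}$, whereas for $\Phi^T$ we are integrating in the complementary range $|a|_{\A_F}\ge e^{c\varpi(T)}$. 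Combined with the support condition on $f_F$ (which via Lemma \ref{lem:number:lattice:pts} forces $|a|_{\A_F}^{\beta_G}$ to be bounded above by a constant depending only on $f_\infty$ and $d$), this makes the domain of integration empty once $\alpha(T)\ge c_2\log D_F$ except in a thin transition region whose measure contributes the required factor $e^{-\eps\alpha(T)}$. For the central contribution $\sum_{z\in Z(F)}f_F(z)$, the corresponding toral integral can be computed explicitly and decays like $e^{-\alpha(T)/2}$ (arising as $e^{-\varpi(T)}$ after folding the quadratic defining $\alpha=2\varpi$), which accounts for the sharper rate $e^{-\alpha(T)/2}$ in the first estimate and explains why only a generic rate $e^{-\eps\alpha(T)}$ survives in the second.

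The $D_F^{-1}$ prefactor tracks through the volume normalizations of Section 2.2: each integral over $B(F)\backslash G(\A_F)^1$, once divided by $\vol(G(F)\backslash G(\A_F)^1)$, produces the ratio
\[
 \frac{\vol(T_0(F)\backslash T_0(\A_F)^1)\,\vol(F\backslash \A_F)}{\vol(G(F)\backslash G(\A_F)^1)}\ll_d D_F^{-1}(\log D_F)^{d-1}
\]
of Lemma \ref{lem:quot:meas} (in fact we are in the second inequality there, since the $U_0$-integration produces $\vol(\widehat{\OOO_F})=D_F^{-1/2}$ rather than $\vol(F\backslash\A_F)=1$). Absorbing the harmless $(\log D_F)^{d-1}$ into the exponential factor $e^{-\eps\alpha(T)}$ by slightly shrinking $\eps$ yields the two claimed bounds. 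The polynomial factor $\varpi(T)$ has the same origin as in Section 3: it comes from the truncated toral integral in Lemma \ref{lem:torus:int:truncation}.

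The main obstacle is being precise about the combinatorial identity that produces $\Phi^T$ and $k_B$ in the rank-one setting and verifying that the implicit constants in the decay $e^{-\eps\alpha(T)}$ depend only on the archimedean support of $f_\infty$ and on $d$, not on $F$. Once this book-keeping is done, the field-dependent volume factors are tracked just as in the preceding sections, so no genuinely new estimates beyond those of Sections 5–7 are required.
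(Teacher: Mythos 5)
Your approach differs from the paper's in a fundamental way: the paper does not re-derive Arthur's approximation. It simply cites \cite[Lemma 7.7]{coeff_est}, which already makes the constant in Arthur's Theorem~2 explicit in the base field and yields
\[
 \left|\Jgeom^{F,T}(f_F)-j^{F,T}(f_F)\right|
\leq a_1 D_F^{a_2} e^{-\alpha(T)} \varpi(T)
\]
for some $a_1,a_2>0$ depending only on $f_{\infty}$ and $d$. Writing $e^{-\alpha(T)}=e^{-\alpha(T)/2}\,e^{-\alpha(T)/2}$ and using the regularity hypothesis $\alpha(T)\geq 2(a_2+1)\log D_F$ to bound $D_F^{a_2}e^{-\alpha(T)/2}\leq D_F^{-1}$ gives the first displayed inequality of the lemma; the second is obtained analogously by adapting the argument of \cite[Theorem~1]{Ar79}, \cite[Proposition~4.2]{Ar85} to the distribution with central terms removed. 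Your proposal instead attempts to re-run Arthur's combinatorial identity from scratch in rank one and estimate the resulting truncated integral over $B(F)\backslash G(\A_F)^1$ by the methods of Sections~5--7. That route is in principle viable and, if completed, would give an independent and arguably more self-contained proof; but it redoes the content of the cited lemma rather than using it.

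That said, your proposal has a concrete error in the book-keeping of the $D_F^{-1}$ prefactor. You attribute it to the ratio in Lemma~\ref{lem:quot:meas}, i.e.\ to a computation in which one divides by $\vol(G(F)\backslash G(\A_F)^1)$. But the estimate you are asked to prove is for the raw difference $\Jgeom^{F,T}(f_F)-j^{F,T}(f_F)$, not a normalized one. Carrying out the Iwasawa decomposition of your integral over $B(F)\backslash G(\A_F)^1$ produces the numerator of that ratio but not the denominator: you get a factor $\vol(T_0(F)\backslash T_0(\A_F)^1)\ll(\log D_F)^{d-1}$ from the torus variable and $\vol(\widehat{\OOO_F})=D_F^{-1/2}$ from the $U_0(\A_F)$ integration in the constant-term kernel, i.e.\ an overall $D_F^{-1/2}(\log D_F)^{d-1}$, not $D_F^{-1}$. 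To close the gap you still need to trade part of the exponential decay $e^{-\eps\alpha(T)}$ for the missing $D_F^{-1/2}$ (and the log powers) using $\alpha(T)\geq c_2\log D_F$ --- exactly the trick the paper applies to the constant $D_F^{a_2}$ from \cite[Lemma 7.7]{coeff_est}. Finally, the combinatorial identity you write down, asserting that the difference equals a single integral of $\Phi^T\,k_B$ over $B(F)\backslash G(\A_F)^1$, is not established; Arthur's argument in \cite{Ar79} only produces an upper bound, via a comparison over cosets $\delta\in B(F)\backslash G(F)$ and a decomposition of $G(F)\backslash G(\A_F)^1$ into Siegel-type regions. Making this precise --- and verifying that the resulting constants depend only on $f_\infty$ and $d$ --- is the actual content of \cite[Lemma~7.7]{coeff_est} and cannot be waved away as "book-keeping."
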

\begin{proof}
 By \cite[Lemma 7.7]{coeff_est} there are constants $a_1, a_2>0$ depending only on $f_{\infty}$ and $d$ such that
\[
 \left|\Jgeom^{F,T}(f_F)-j^{F,T}(f_F)\right|
\leq a_1 D_F^{a_2} e^{-\alpha(T)} \varpi(T)
\]
for all $T\in\aaa^+$ with $\alpha(T)\geq \rho\log D_F$.\footnote{In \cite[Lemma 7.7]{coeff_est} only the case of the unipotent distributions $J_{\text{unip}}^{F,T}$ and $j_{\text{unip}}^{F,T}$ was considered. It is, however, clear from the proof of \cite[Lemma 7.7]{coeff_est} that the statement of that lemma remains true for $\Jgeom^{F,T}$ and $j_{\text{geom}}^{F,T}$, cf. also \cite[Theorem 1]{Ar79} and \cite[Proposition 4.4]{Ch02}.}
Then $D_F^{a_2} e^{-\alpha(T)/2}\leq 1$ for every $T\in\aaa^+$ with $\alpha(T)\geq 2(a_2+1)\log D_F$. Hence
\[
 \left|\Jgeom^{F,T}(f_F)-j^{F,T}(f_F)\right|
\leq a_1 D_F^{-1} e^{-\alpha(T)/2} \varpi(T) ,
\]
for every $T\in\aaa^+$ with $\alpha(T)\geq\max\{2(a_2+1)\log D_F,\rho\log D_F\}$.

The other inequality follows similarly: Combining the proof of \cite[Lemma 7.7]{coeff_est} with the proofs of \cite[Theorem 1]{Ar79} and \cite[Proposition 4.2]{Ar85} (cf. also \cite[Theorem~3.4]{FiLaMu_limitmult}) we find $a_1, a_2,\delta, \rho'>0$ independent of $F$ such that
\[
   \left|J_{\text{geom}\smallsetminus Z}^{F,T}(f_F)-j_{G\smallsetminus Z}^{F,T}(f_F)\right|
\leq a_1 D_F^{a_2} e^{-\delta \alpha(T)} \varpi(T)
\]
for every $T\in\aaa^+$ with $\alpha(T)\geq\max\{\rho\log D_F, \rho'\log D_F\}$.
Hence
\[
  \left|J_{\text{geom}\smallsetminus Z}^{F,T}(f_F)-j_{G\smallsetminus Z}^{F,T}(f_F)\right|
\leq a_1 D_F^{-1} e^{-\delta \alpha(T)/2} \varpi(T)
\]
for every $T\in\aaa^+$ with $\alpha(T)\geq\max\{\rho\log D_F, \rho'\log D_F, 2(a_2+1) \log D_F\}$.
\end{proof}

\begin{proof}[Proof of Proposition \ref{prop:upper:bound}]
By Lemma \ref{lem:upper:bound:T} and Lemma \ref{lem:approx} there are constants $C_1, C_2>0$ such that for all $F\in\F_{\signature}$ we have
\[
 \frac{J_{\text{geom}\smallsetminus Z}^{F,T}(f_F)}{\vol(G(F)\backslash G(\A_F)^1)}
\leq C_1 D_F^{-\frac{1}{2}} (\log D_F)^{2d} \varpi(T)
\]
for all $T\in\aaa^+$ with $\alpha(T)\geq C_2\log D_F$.
We can therefore deduce  an upper bound for $J_{\text{geom}\smallsetminus Z}^F(f_F)$ from the upper bound for $J_{\text{geom}\smallsetminus Z}^{F,T}(f_F)$ by interpolation. 
Hence there is a constant $a>0$ such that
\[
 \frac{J_{\text{geom}\smallsetminus Z}^F(f_F)}{\vol(G(F)\backslash G(\A_F)^1)}
\leq a D_F^{-\frac{1}{2}} (\log D_F)^{2d +1 }  
\]
for all $F\in\F_{\signature}$. 
This proves the assertion of the proposition.
\end{proof}

\section{Spectral limit multiplicity property}\label{sec:spec:limit}

The purpose of this section is to prove the  spectral limit multiplicity property in the following form:
\begin{proposition}\label{prop:spectral:limit}
Let $J_{\text{spec}}^F$ denote the spectral side of Arthur's trace formula for $G$ over $F$, and $J_{\text{disc}}^F$ the contribution from the representations occurring in $L^2_{\text{disc}}(G(F)\backslash G(\A_F)^1)$ to $J_{\text{spec}}^F$, that is, the trace of the right regular representation restricted to $L^2_{\text{disc}}(G(F)\backslash G(\A_F)^1)$. 
Then for every $f_{\infty}\in \HHH(G(\R^{\signature})^1)$ we have
\[
\lim_{F\in\F_{\signature}}\frac{J_{\text{spec}}^F(f_{\infty}\cdot\One_{\cpt_f^F})
-J_{\text{disc}}^F(f_{\infty}\cdot\One_{\cpt_f^F})}{\vol(G(F)\backslash G(\A_F)^1)}
=0.
\]
More precisely, for any $\eps>0$ we have 
\[
\left|\frac{J_{\text{spec}}^F(f_{\infty}\cdot\One_{\cpt_f^F})
-J_{\text{disc}}^F(f_{\infty}\cdot\One_{\cpt_f^F})}{\vol(G(F)\backslash G(\A_F)^1)}\right|
\ll_{\signature, \eps} \nu_F^{-\delta_G +\eps} 
\]
with $\delta_G=1/2$ if $G=\GL(2)$ and $\delta_G=2/3$ if $G=\SL(2)$.
\end{proposition}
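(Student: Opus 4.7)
The structural key is that for $G=\SL(2)$ or $G=\GL(2)$ the only proper $F$-parabolic up to conjugation is the Borel $B=T_0U_0$, so the spectral expansion reduces to
\[
J_{\text{spec}}^F(f) = J_{\text{disc}}^F(f) + J_{\text{cont}}^F(f),
\]
where $J_{\text{cont}}^F$ is the contribution of the Eisenstein series from $B$. The plan is to bound $J_{\text{cont}}^F(f_\infty\cdot\One_{\cpt_f^F})$ directly. Following the organization of Arthur's continuous spectral expansion as in \cite{FiLaMu_limitmult,FiLa15}, I would write
\[
J_{\text{cont}}^F(f) = \frac{1}{4\pi}\sum_{\chi} \int_{i\R} \tr\bigl(M(\chi,\lambda)^{-1} M'(\chi,\lambda)\,\rho(\chi,\lambda; f)\bigr)\,|d\lambda|,
\]
with $\chi$ ranging over unitary characters of $T_0(F)\backslash T_0(\A_F)^1$ modulo Weyl action, and $M(\chi,\lambda)$ the global intertwining operator. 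Since the finite part of $f$ is the characteristic function of the maximal compact subgroup, the spherical trace vanishes unless $\chi$ is unramified at every finite place, in which case the finite trace equals $1$.

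For each everywhere-unramified $\chi$ the global intertwiner on the spherical vector is, up to local archimedean factors, a ratio of completed Hecke $L$-functions $\Lambda_F(2\lambda,\chi)/\Lambda_F(1+2\lambda,\chi)$. Its logarithmic derivative on the line $\Re\lambda=0$ is controlled by $L_F'/L_F$ at $s=2it$ and $s=1+2it$, and standard complex-analytic bounds (Hadamard three-lines, the log-free zero-free region, and the Stark lower bound $\res_{s=1}\zeta_F(s)\gg_\eps D_F^{-\eps}$) give a uniform estimate
\[
\bigl|M^{-1}M'(\chi,it)\bigr| \ll_\eps (\log D_F)^{O(1)}(\log(2+|t|))^{O(1)}.
\]
The singularity of $\zeta_F$ at $s=1$ (present only for $\chi=1$ at $t=0$) is canceled by Arthur's principal-value regularization and the boundary contribution at the Weyl element in the explicit formula. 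Since $f_\infty$ is a fixed $\cpt_\infty$-finite compactly supported function, $\tr\rho(\chi,\lambda;f_\infty)$ is Paley--Wiener in $\lambda$ and Schwartz in the archimedean component of $\chi$, uniformly in $F$; this effectively restricts the sum-integral to a bounded region.

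The next step is counting the contributing characters. Everywhere-unramified unitary Hecke characters decompose as a product of a narrow ideal class group character (there are at most $h_F^+ \ll_\eps D_F^{1/2+\eps}$ by \eqref{eq:upper:bound:class:number}) and a character of $F_\infty^1/\OOO_F^\times$ (a compact torus of dimension $r_1+r_2-1$ and covolume $\asymp R_F$, by Dirichlet). Together with the per-character estimate above this yields a bound on $|J_{\text{cont}}^F(f)|$; division by $\vol(G(F)\backslash G(\A_F)^1)$, which by the class number formula is $\asymp_\signature h_F R_F$ for $\GL(2)$ and $\asymp_\signature D_F^{1/2}$ for $\SL(2)$, and translation into $\nu_F$ via \eqref{eq:volume} and Lemma~\ref{lem:arch:compact}, produces the claimed exponents $\delta_{\GL(2)}=1/2$ and $\delta_{\SL(2)}=2/3$.

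The main obstacle will be balancing the narrow class number $h_F^+$ and the regulator $R_F$: a naive union bound over characters yields only $\nu_F^\eps$ rather than $\nu_F^{-\delta_G+\eps}$. To reach the stated power one has to exploit the Poisson duality between the lattice of archimedean components of unramified characters and the lattice $\OOO_F^\times$ of units (which contributes a factor $R_F$ compensating the one in the denominator), and combine it with the Brauer--Siegel-type inequality $h_F^+ R_F \gg_\eps D_F^{1/2-\eps}$. This balancing, together with the correct handling of the regularization at $\chi=1$, $t=0$, is what I expect to be the technical heart of the proof.
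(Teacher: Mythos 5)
Your high-level plan matches the paper's: write $J_{\text{spec}}^F - J_{\text{disc}}^F$ via the continuous spectrum from the Borel, reduce to unramified characters because the finite part of $f_F$ is $\One_{\cpt_f^F}$, bound the logarithmic derivative of the intertwining operator using $L$-functions, and count contributing characters. However, you have a genuine misconception about what the argument needs, and several structural details are off.

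The most important gap is your claim that ``a naive union bound over characters yields only $\nu_F^\eps$'' and that one must exploit ``Poisson duality between the character lattice and $\OOO_F^\times$'' and ``Brauer--Siegel balancing'' to reach the stated exponents. This is not so. The paper's argument \emph{is} the naive union bound, and it gives exactly the stated power saving. The point you are missing is that each summand in the character sum carries the factor $\vol_{G(\A_{F,f})}(\cpt_f^F)$ coming from $\|f_F\|_{L^1}$, and $\nu_F = \vol(G(F)\backslash G(\A_F)^1)/\vol(\cpt_f^F)$ already has the reciprocal of this volume built in. Once you see this, the count of characters is bounded (up to logs) by $h_F^{a_G}$ with $a_G=2$ or $1$, the per-character contribution is $\ll \log D_F$ by a winding-number estimate (the paper's Lemma~\ref{lem:winding:number}), and dividing by $\nu_F$ yields $h_F^{a_G}\log D_F/\nu_F$. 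With $h_F\ll_d D_F^{1/2}(\log D_F)^{d-1}$ and $\nu_F\asymp D_F^{2}$ resp.\ $D_F^{3/2}$ (up to logs), this is $\nu_F^{-1/2+\eps}$ resp.\ $\nu_F^{-2/3+\eps}$. No oscillation, no Poisson summation, no Brauer--Siegel lower bound on $h_F R_F$ is required.

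Two further points. First, the expansion has two pieces: the integral over $i\R$ of $\tr(M^{-1}M'\rho(\lambda,f))$ and the term $\frac14\tr(M(0)\rho(0,f))$; the latter needs a separate argument (the paper uses unitarity of $M(0)$ and a dimension count of $\cpt_f^F$-fixed, $\tau$-isotypic vectors). Your proposal does not mention this term. Second, your concern about the pole of $\zeta_F$ at $s=1$ and ``Arthur's principal-value regularization at $\chi=1$, $t=0$'' is misplaced for this argument: after splitting $M^{-1}M'$ as $(n'/n)\,\id + R_\infty^{-1}R_\infty'$ with $n(\chi,\lambda) = L(1-\lambda,\chi_1^{-1}\chi_2)/L(1+\lambda,\chi_1\chi_2^{-1})$, the logarithmic derivative of the scalar factor on the line $\Re\lambda = 0$ is controlled by $L'/L$ at $s=1+it$, which lies on the boundary of the zero-free region; the uniform bound $\int_\R |L'/L(1+it,\mu)|(1+|t|)^{-4}\,dt \ll_d \log D_F$ (a M\"uller-style estimate depending on the conductor and archimedean parameters) is what is needed, not a regularization device. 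The archimedean normalized intertwiner is handled by the absolute-convergence bounds of Finis--Lapid--M\"uller, uniformly in $F$ since $\cpt_\infty$ is fixed.

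So: the skeleton is right, but you have added machinery the proof does not need, omitted the $\tr M(0)\rho(0,f)$ term, and located the analytic difficulty in the wrong place.
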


To prove this proposition, we write the remaining part of the spectral side as follows:
For any $f\in C_c^{\infty}(G(\A_F)^1)$ we have
\[
J_{\text{spec}}^F(f) - J_{\text{disc}}^F(f)
= -\frac{1}{4\pi}\int_{i\R} \tr\left(M(\lambda)^{-1}M'(\lambda) \rho(\lambda, f)\right) \,d\lambda
+\frac{1}{4} \tr\left(M(0)\rho(0, f)\right),
\]
see \cite{GeJa79,Gel96}.
Here the notation is as follows:
\begin{itemize}
\item Let $\AAA$ be the Hilbert space completion of the vector space of all smooth functions $\varphi: T_0(F) U_0(\A_F)\backslash G(\A_F)^1\longrightarrow\C$ such that $\varphi(ag)=\delta_0(a)^{1/2}\varphi(g)$ for all $a\in A_0$ and $g\in G(\A_F)^1$ which are square-integrable over the quotient $T_0(F) U_0(\A_F)\backslash G(\A_F)^1$.
Then $\rho(\lambda, \cdot):\AAA\longrightarrow\AAA$ denotes the induced representation given by
\[
(\rho(\lambda, g)\varphi)(x)
=\varphi(gx) e^{\langle \lambda, H_0(gx)-H_0(x)\rangle},\;\;
g\in G(\A_F)^1,\;\varphi\in\AAA.
\]
\item
$M(\lambda)$ denotes the intertwining operator attached to this induced representation $\rho(\lambda,\cdot)$ via Eisenstein series.
\end{itemize}
$\AAA$ decomposes as $\widehat\bigoplus_{\chi\in\Pi_{\text{disc}}(T_0(\A_F)^1)} \AAA_{\chi}$ where
\begin{itemize}
\item
$\chi=(\chi_1,\chi_2)$ runs over all discrete representations of $T_0(\A_F)^1$, that is, all pairs of unitary characters $\chi_1, \chi_2:F^{\times}\backslash \A_F^1\longrightarrow\C$. If $G=\SL(2)$, $\chi_1$ and $\chi_2$ satisfy $\chi_2=\chi_1^{-1}$.
\item
$\AAA_{\chi}$ denotes the subspace of $\AAA$ of all $\varphi\in \AAA$ which transform according to $\chi$, that is,  $\varphi(\diag(t_1,t_2)g)=\chi_1(t_1)\chi_2(t_2)\varphi(g)$ for all $\diag(t_1, t_2)\in T_0(\A_F)^1$.
\end{itemize}
Let $\rho_{\chi}(\lambda,\cdot)$ denote the restriction of $\rho(\lambda, \cdot)$ to $\AAA_{\chi}$.
Since our test function $f_F=f_{\infty}\cdot\One_{\cpt_f^F}$ is $\cpt_f^F$-invariant, we have $\rho_{\chi}(\lambda, f_F)=0$ unless $\chi_1$ and $\chi_2$ are unramified at all finite places. 

\begin{lemma}\label{lem:spec:term1}
There exists a constant $c>0$ depending only on $f_{\infty}$ and $d$ such that for all $F\in\F_{\signature}$ we have
\[
|\tr M(0)\rho(0, f_F)|
\leq
 c h_F^{a_G} \vol_{G(\A_{F,f})}(\cpt_{f}^F)
\]
where $a_G=2$ if $G=\GL(2)$, and $a_G=1$ if $G=\SL(2)$.
\end{lemma}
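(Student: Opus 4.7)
The strategy is to expand $\tr M(0)\rho(0,f_F)$ along the spectral decomposition $\AAA=\widehat{\bigoplus}_\chi\AAA_\chi$ and reduce to a sum over a tractable set of characters. Two elementary observations restrict the sum drastically: right $\cpt_f^F$-invariance of $f_F$ forces the image of $\rho_\chi(0,f_F)$ to lie in the $\cpt_f^F$-fixed subspace, so only $\chi$ unramified at every finite place contribute; and since $M(0)$ intertwines $\AAA_\chi$ with $\AAA_{w\chi}$ (where $w$ is the nontrivial Weyl element), the trace of the restriction to $\AAA_\chi$ vanishes unless $w\chi=\chi$.

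For each surviving Weyl-invariant unramified character $\chi$, the Iwasawa decomposition $G(\A_F)^1=B(\A_F)^1\cpt^F$ forces $\dim\AAA_\chi^{\cpt^F}=1$; let $\phi_\chi$ denote a unit spherical vector. Since $\rho_\chi(0,\One_{\cpt_f^F})=\vol(\cpt_f^F)\cdot P_{\cpt_f^F}$, with $P_{\cpt_f^F}$ the orthogonal projection onto $\cpt_f^F$-invariants, the contribution of $\chi$ factorizes as
\[
\tr\bigl(M(0)\rho_\chi(0,f_F)\bigr)
=\vol(\cpt_f^F)\cdot m_\chi\cdot\bigl(\rho_{\chi_\infty}(0,f_\infty)\phi_\chi,\phi_\chi\bigr),
\]
where $m_\chi$ is the scalar eigenvalue of $M(0)$ on $\AAA_\chi^{\cpt^F}$. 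Unitarity of $M(0)$ along $i\R$ gives $|m_\chi|\le 1$, and the archimedean matrix coefficient equals the value at the infinitesimal character of $\chi_\infty$ of the spherical transform of (the bi-$\cpt_\infty$-projection of) $f_\infty$; by the Paley--Wiener theorem for $G(\R^{\signature})^1$ this is rapidly decreasing in the archimedean parameters, so its sum over any lattice of such parameters is bounded by a constant $C(f_\infty,d)$.

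The final step is to count the Weyl-invariant unramified $\chi$ uniformly in $F$. For $\SL(2)$ the condition $w\chi=\chi$ becomes $\chi^2=1$; by class field theory the 2-torsion of the unramified characters of $F^\times\backslash\A_F^\times$ has order $\ll_d h_F$, which together with the previous step yields the bound with $a_G=1$. For $\GL(2)$ the condition becomes $\chi_1=\chi_2$; although the torsion part of the associated character group is again $\ll_d h_F$, an additional factor of $h_F$ enters through the explicit form of $m_\chi$ at the self-dual point, where the intertwining operator involves a Dedekind-zeta ratio whose residue $\res_{s=1}\zeta_F(s)\sim h_F D_F^{-1/2}$ contributes another class-number factor, producing the exponent $a_G=2$. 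The main technical obstacle is precisely this normalization analysis of $m_\chi$ for $\GL(2)$, together with making the archimedean Paley--Wiener summation uniform in the signature $\signature$.
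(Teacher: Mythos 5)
Your Weyl-invariance reduction is a genuinely different (and in principle sharper) route than the paper's. The paper does not exploit the fact that $M(0)$ maps $\AAA_\chi$ to $\AAA_{w\chi}$; it simply uses unitarity of $M(0)$ to pass to a trace-norm estimate, sums $\dim\AAA_\chi^{\tau,\cpt_f^F}\,\|f_F\|_{L^1}$ over $\cpt_\infty$-types $\tau$, and counts all pairs $(\chi_1,\chi_2)$ of unramified characters --- giving $h_F^2$ for $\GL(2)$ and $h_F$ for $\SL(2)$ directly, with no Weyl restriction. Your observation that only $\chi$ with $w\chi=\chi$ can contribute to $\tr\bigl(M(0)\rho(0,f_F)\bigr)$ is correct (the operator is block off-diagonal relative to $\chi\mapsto w\chi$), and this would in fact yield the \emph{stronger} bound $a_G=1$ for $\GL(2)$, since the unramified $\chi$ with $\chi_1=\chi_2$ number $\asymp h_F$, not $h_F^2$.

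However, your attempt to recover the exponent $a_G=2$ for $\GL(2)$ is a genuine gap. You assert both that $|m_\chi|\le 1$ by unitarity of $M(0)$ on $i\R$ \emph{and} that $m_\chi$ contributes an extra factor of $h_F$ through the residue $\res_{s=1}\zeta_F(s)$. These cannot both hold. In fact the first is correct: the scalar normalization factor at $\lambda=0$, $\chi_1=\chi_2$, is $n(\chi,\lambda)=\zeta_F^*(1-\lambda)/\zeta_F^*(1+\lambda)$, whose simple poles at $\lambda=0$ cancel, giving $n(\chi,0)=-1$; the residue does not survive in the ratio, and the unitary $M(0)$ cannot produce a class-number factor. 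Your route, carried out consistently, proves the lemma with a smaller exponent $a_G=1$ for $\GL(2)$ (which still implies the stated estimate), but the reasoning you give for $a_G=2$ is wrong, not merely imprecise. One further small point: the reduction to a single spherical vector $\phi_\chi$ with $\dim\AAA_\chi^{\cpt^F}=1$ presupposes that $f_\infty$ is bi-$\cpt_\infty$-invariant, whereas the hypothesis is only $\cpt_\infty$-finiteness; as in the paper's proof one must sum over the finitely many contributing $\cpt_\infty$-types $\tau$ and bound $\dim\AAA_\chi^{\tau,\cpt_f^F}$ uniformly in $F$.
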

\begin{proof}
$M(0)$ is a unitary operator so that it suffices to estimate
\[
|\tr\rho(0,f_F)|
\leq \sum_{\chi} |\tr\rho_{\chi}(0, f_F)|,
\]
where the sum runs over pairs of unramified characters $\chi=(\chi_1, \chi_2)$ with $\chi_2=\chi_1^{-1}$ in the case of $G=\SL(2)$. The number of unramified characters $F^{\times}\backslash \A_F^1\longrightarrow\C$ equals the class number $h_F$, so that the number of summands equals $h_F^2$ if $G=\GL(2)$ and $h_F$ if $G=\SL(2)$.
Hence it suffices to estimate each $|\tr\rho_{\chi}(0,f_F)|$ separately.
For a $\cpt_{\infty}$-type $\tau\in\widehat{\cpt_{\infty}}$  let $\AAA_{\chi}^{\tau}$ denote the $\tau$-isotypic component of $\AAA_{\chi}$ so that $\AAA_{\chi}=\bigoplus_{\tau\in\widehat{\cpt_{\infty}}} \AAA_{\chi}^{\tau}$. Let $\AAA_{\chi}^{\tau,\cpt_f^F}$ denote the $\cpt_f^F$-fixed vectors in $\AAA_{\chi}^{\tau,\cpt_f^F}$. Then $\dim\AAA_{\chi}^{\tau, \cpt_f^F}<\infty$, and we can estimate
\[
|\tr\rho_{\chi}(0, f_F)|
 \leq \sum_{\tau\in\widehat{\cpt_{\infty}}} \dim\AAA_{\chi}^{\tau, \cpt_f^F} \|\rho(0,f_F)_{\AAA_{\chi}^{\tau,\cpt_f^F}}\|,
 \leq \sum_{\tau\in\widehat{\cpt_{\infty}}} \dim\AAA_{\chi}^{\tau, \cpt_f^F} \|f_F\|_{L^1(G(\A_F)^1)}
\]
where $\rho(0,f_F)_{\AAA_{\chi}^{\tau,\cpt_f^F}}$ denotes the restriction of $\rho(0,f_F)$ to $\AAA_{\chi}^{\tau,\cpt_f^F}$, and  $\|\rho(0,f_F)_{\AAA_{\chi}^{\tau,\cpt_f^F}}\|$ denotes the operator norm.
Now
\[
\|f_F\|_{L^1(G(\A_F))}
= \vol_{G(\A_{F,f})}(\cpt_f^F)\|f_{\infty}\|_{L^1(G(\R^{\signature})^1)}  ,
\]
and for fixed $f_{\infty}$ only finitely many $\cpt_{\infty}$-types can contribute to the above sum, and the set of contributing $\cpt_{\infty}$-types is independent of $F$. Hence it remains to show that we  can estimate the dimension of $\AAA_{\chi}^{\tau,\cpt_f^F}$ independently of $F$. As in the proof of \cite[Corollary 7.4]{FiLaMu_limitmult} we can compute
\begin{equation}\label{eq:dimension}
\dim\AAA_{\chi}^{\tau,\cpt_f^F}
= \dim \Ind_{B(\R^{\signature})}^{G(\R^{\signature})} (\chi_{\infty})^{\tau} \Ind_{B(\A_{F,f})}^{G(\A_{F,f})} (\chi_f)^{\cpt_f^F}
= \dim \Ind_{B(\R^{\signature})}^{G(\R^{\signature})} (\chi_{\infty})^{\tau}
\end{equation}
since $\chi$ is unramified. This last term does only depend on $\signature$ and $\tau$ but not on $F\in\F_{\signature}$. This completes the proof of the lemma.
\end{proof}

\begin{lemma}\label{lem:spec:term2}
There exists $c>0$ such that for all $F\in\F_{\signature}$ we have
\[
\int_{i\R} \left| \tr\left(M(\lambda)^{-1}M'(\lambda) \rho(\lambda, f_F)\right)\right| \,d\lambda
\leq
ch_F^{a_G}\log D_F \vol_{G(\A_{F,f})}( \cpt_f^F )
\]
with $a_G$ as in Lemma \ref{lem:spec:term1}.
\end{lemma}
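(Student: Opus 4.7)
The plan is to decompose the spectral integral over pairs of Hecke characters, reduce to those that are unramified at every finite place (of which there are at most $h_F^{a_G}$ by $\cpt_f^F$-biinvariance of $f_F$), and bound each summand by $\vol_{G(\A_{F,f})}(\cpt_f^F)\log D_F$. On the $\cpt_f^F$-fixed, $\cpt_\infty$-isotypic subspace $\AAA_\chi^{\tau,\cpt_f^F}$ (of dimension bounded independently of $F$ by~\eqref{eq:dimension}, and nonzero for only finitely many $\tau$), the intertwining operator acts by the scalar
\[
 M_\chi(\lambda)\;=\;c_\infty^\tau(\lambda,\chi_\infty)\cdot\frac{L(2\lambda,\eta)}{L(1+2\lambda,\eta)},\qquad \eta=\chi_1\chi_2^{-1},
\]
where $L(\cdot,\eta)$ denotes the finite-part Hecke $L$-function and the archimedean factor $c_\infty^\tau$ depends only on $\signature,\chi_\infty,\tau$ and not on $F$ (Gindikin-Karpelevich). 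The trace of $\rho_\chi(\lambda,f_F)$ on this space factors as $\vol_{G(\A_{F,f})}(\cpt_f^F)\cdot \widehat{f_\infty^\tau}(\chi_\infty,\lambda)$ with $\widehat{f_\infty^\tau}(\chi_\infty,\cdot)$ a Schwartz function of $\lambda\in i\R$, uniformly as $\chi_\infty$ ranges over the compact set of contributing archimedean parameters.

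The logarithmic derivative on the unitary line is then estimated by combining the functional equation $\Lambda(s,\eta)=W(\eta)\Lambda(1-s,\eta^{-1})$, together with $\Lambda(s,\eta)=D_F^{s/2}L_\infty(s,\eta)L(s,\eta)$ (which carries no additional finite conductor, since $\eta$ is unramified at every finite place), with the classical bound $|L'/L(1+it,\eta)|\ll\log(D_F(2+|t|))$, uniform in the conductor, arising from non-vanishing of $L(\cdot,\eta)$ on $\Re s=1$ and Hadamard factorization, and with Stirling for the archimedean digammas. This gives, for $\eta$ nontrivial,
\[
 |M_\chi^{-1}M_\chi'(i\tau)|\;\ll\;\log\bigl(D_F(2+|\tau|)\bigr),
\]
and pairing with the rapid decay of $\widehat{f_\infty^\tau}(\chi_\infty,i\tau)$ yields $\int_{i\R}|\tr(M_\chi^{-1}M_\chi'(i\tau)\rho_\chi(i\tau,f_F))|\,d\tau\ll\vol_{G(\A_{F,f})}(\cpt_f^F)\log D_F$.

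For $\eta=1$ (i.e., $\chi_1=\chi_2$ in the $\GL(2)$ case, or $\chi_1^2=1$ in the $\SL(2)$ case), $L(s,\eta)$ agrees with $\zeta_F(s)$ up to a finite Euler product, so $M_\chi(\lambda)$ acquires a simple zero at $\lambda=0$ from the pole of $\zeta_F(1+2\lambda)$ and $M_\chi^{-1}M_\chi'$ develops a simple pole there. The spectral integral is to be read as a Cauchy principal value in the sense of Arthur; after subtracting the polar part $(2\lambda)^{-1}$ the remainder still satisfies the same $O(\log(D_F(2+|\tau|)))$ estimate and the principal value converges with that upper bound. Summing over the at most $h_F^{a_G}$ unramified $\chi$ yields the asserted estimate. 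The principal technical obstacle is the uniform-in-conductor bound on $L'/L(1+it,\eta)$ for Hecke characters whose conductor is bounded by $D_F$, together with the polar bookkeeping at $\lambda=0$ in the trivial-character case; both are classical in analytic number theory, but have to be assembled in a manner that is uniform in $F\in\F_{\signature}$.
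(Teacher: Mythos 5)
Your overall strategy matches the paper's: decompose the integral over pairs of unramified Hecke characters (at most $h_F^{a_G}$ of them), localize at $\cpt_\infty$-types, separate the contribution of the intertwining operator into a normalizing scalar $L$-function ratio and a bounded archimedean remainder, and reduce to a logarithmic-derivative estimate for Hecke $L$-functions. However, there are two genuine gaps.

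First, the asserted pointwise bound $|L'/L(1+it,\eta)|\ll\log\bigl(D_F(2+|t|)\bigr)$, uniform in the conductor and stated as ``classical \dots arising from non-vanishing of $L(\cdot,\eta)$ on $\Re s=1$ and Hadamard factorization,'' is not a consequence of non-vanishing alone. A bound of this strength requires the standard zero-free region, and for \emph{real} characters $\eta$ the Siegel-zero exception near $t=0$ means such a pointwise inequality is not available unconditionally without further input (e.g.\ an explicit lower bound for $L(1,\eta)$). The paper avoids this entirely: Lemma~\ref{lem:winding:number} establishes only the \emph{integrated} bound $\int_{-T}^T |L'/L(1+it,\mu)|\,dt \leq CT(1+\log T+\log D_F)$, via the argument-principle/winding-number argument from M\"uller's work, which is robust to possible exceptional zeros (the contribution of a zero $\beta$ near $1$ to $\int_{-1}^1|L'/L(1+it)|\,dt$ is $O(\log\frac{1}{1-\beta})=O(\log D_F)$ by the effective $1-\beta\gg D_F^{-1/2}$). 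This integrated bound is then paired with the polynomial decay $(1+|\lambda|)^{-k}$. If you want to keep a pointwise formulation, you must either restrict to non-real $\eta$ and $|t|\geq 1$ or incorporate the Siegel-zero analysis explicitly; as written, the proof is incomplete.

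Second, the handling of $\eta=1$ is based on a normalization error. In the paper's normalization the scalar factor is $n(\chi,\lambda)=L^*(1-\lambda,\eta)/L^*(1+\lambda,\eta^{-1})$ with $L^*$ the \emph{completed} $L$-function; for $\eta=1$ both $\zeta_F^*(1-\lambda)$ and $\zeta_F^*(1+\lambda)$ have simple poles at $\lambda=0$, and in
\[
\frac{n'}{n}(\lambda)=-\frac{(\zeta_F^*)'}{\zeta_F^*}(1-\lambda)-\frac{(\zeta_F^*)'}{\zeta_F^*}(1+\lambda)
\]
the $\pm1/\lambda$ singularities cancel, so $n'/n$ is regular at $\lambda=0$. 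Your factorization $M_\chi(\lambda)=c_\infty^\tau\cdot L(2\lambda,\eta)/L(1+2\lambda,\eta)$ with \emph{finite-part} $L$-functions leads to an apparent zero of $M_\chi$ at $\lambda=0$ only because the compensating pole of the archimedean factor $c_\infty^\tau$ has been dropped; there is in fact no pole in $M^{-1}M'$ and no principal value to take. The invocation of ``Cauchy principal value in the sense of Arthur'' is a red herring and would need to be removed once the bookkeeping of archimedean $\Gamma$-factors is done correctly.
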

\begin{proof}
As in the proof of Lemma \ref{lem:spec:term1} we decompose $\AAA$ according to the pairs of characters $\chi=(\chi_1,\chi_2)$, and get for every integer $k>0$ that (cf. \cite[\S 5]{FiLaMu}):
\begin{align*}
& \int_{i\R} \left| \tr\left(M(\lambda)^{-1}M'(\lambda) \rho(\lambda, f_F)\right)\right| \,d\lambda
 \leq \sum_{\chi} \int_{i\R} \left| \tr\left(M(\lambda)^{-1}M'(\lambda) \rho_{\chi}(\lambda, f_F)\right)\right| \,d\lambda \\
& \leq  C_k \vol_{G(\A_{F,f})}( \cpt_f^F ) \sum_{\chi, \tau} \dim\AAA_{\chi}^{\tau,\cpt_f^F}  \int_{i\R} \| M(\chi,\lambda)^{-1} M'(\chi,\lambda) _{|\AAA_{\chi}^{\tau,\cpt_f^F}} \| ~ (1+|\lambda|)^{-k}\,d\lambda,
\end{align*}
where $C_k>0$ is a constant depending only on $k$ and $f_{\infty}$, and the sum runs over $\chi$ and $\tau$ for which the restriction of $\rho(\lambda, f_F)$ to $\AAA_{\chi}^{\tau,\cpt_f^F}$ is non-zero. Here $M(\chi,\lambda)$ denotes the $M(\lambda)$ restricted to $\AAA_{\chi}$. Since $\chi$ is unramified, we have
\[
M(\chi,\lambda)^{-1}M'(\chi,\lambda)
= \frac{n'(\chi,\lambda)}{n(\chi,\lambda)}\id + R_{\infty}(\chi,\lambda)^{-1}R'_{\infty}(\chi,\lambda),
\]
where $n(\chi,\lambda)$ is a scalar normalization factor, and $R_{\infty}(\chi,\lambda)$ denotes the local normalized intertwining operator at $\infty$.
Again, as in the proof of Lemma \ref{lem:spec:term1} there are only $h_F^2$ (resp. $h_F$) many $\chi$'s which may contribute to the above sum if $G=\GL(2)$ (resp. $G=\SL(2)$), and the contributing $\cpt_{\infty}$-types depend only on $f_{\infty}$ and their number is finite.

By \cite[\S 5]{FiLaMu} there exist constants $a_k, b_k>0$ independent of $\tau\in\widehat{\cpt_{\infty}}$ and $F\in\F_{\signature}$ such that
\[
\int_{i\R} \| R(\chi,\lambda)^{-1} R'(\chi,\lambda) _{|\AAA_{\chi}^{\tau,\cpt_f^F}} \| ~(1+|\lambda|)^{-k}\,d\lambda
\leq a_k(1+\|\tau\|)^{b_k}.
\]
Taking into account dimension formula \eqref{eq:dimension} we are left to estimate
\[
\int_{i\R} \left|\frac{n'(\chi,\lambda)}{n(\chi,\lambda)}\right| ~(1+|\lambda|)^{-k}\,d\lambda.
\]
We have
\[
n(\chi,\lambda)
=\frac{L(1-\lambda,\tilde{\chi_1}\times\chi_2)}{L(1+\lambda,\chi_1\times\tilde{\chi_2})}
=\frac{L(1-\lambda, \chi_1^{-1}\chi_2)}{L(1+\lambda,\chi_1\chi_2^{-1})},
\]
where $L(s,\chi_1\times\chi_2)$ is the completed Rankin-Selberg $L$-function.
Hence if $\chi_1^{-1}\chi_2=1$, then
\[
n(\chi,\lambda)
=\frac{\zeta_F^*(1-\lambda)}{\zeta_F^*(1+\lambda)}
\]
for $\zeta_F^*(s)$ the completed Dedekind zeta function attached to $F$. Taking $k=4$ in above estimates and using Lemma \ref{lem:winding:number} below then finishes the proof of the assertion.
\end{proof}

\begin{lemma}\label{lem:winding:number}
There exist a constant $c>0$ depending only on $d$ such that for all $F\in\F_{\signature}$ and
all unramified unitary characters $\mu:F^{\times}\backslash \A_F^1\longrightarrow\C$ we have
\[
\int_{\R} \left|\frac{L'(1+it,\mu)}{L(1+it, \mu)}\right| ~(1+|t|)^{-4}\,dt
\leq c\log D_F.
\]
\end{lemma}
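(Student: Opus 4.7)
The plan is to exploit the Hadamard factorization of the completed Hecke $L$-function $\Lambda(s,\mu) = D_F^{s/2}\gamma_\mu(s) L(s,\mu)$, where $\gamma_\mu(s)$ is the archimedean gamma factor determined by the signature $\signature$. Taking the logarithmic derivative yields the standard decomposition
\[
 \frac{L'}{L}(s,\mu) = -\tfrac{1}{2}\log D_F - \frac{\gamma'_\mu}{\gamma_\mu}(s) + B_\mu + \sum_{\rho}\Bigl(\tfrac{1}{s-\rho}+\tfrac{1}{\rho}\Bigr),
\]
with $\rho$ ranging over the nontrivial zeros of $L(\cdot,\mu)$ in the critical strip; the case $\mu=1$ carries extra polar terms at $s=0,1$ which are routine to treat separately (and in any case harmless in the downstream application, where they cancel against their counterparts from the other zeta-factor). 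The arithmetic prefactor $-\tfrac{1}{2}\log D_F$ is the main contribution, and the task is to show that every remaining piece contributes only $O_d(\log D_F)$ when integrated against $(1+|t|)^{-4}$.

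First I would dispatch the easy pieces. The constant $\tfrac12\log D_F$ trivially gives $O(\log D_F)$ after integrating the weight. Stirling's formula yields $|\gamma'_\mu/\gamma_\mu(1+it)|\ll_d \log(|t|+2)$, which is integrable against $(1+|t|)^{-4}$ and contributes only $O_d(1)$. To control the constant $B_\mu$ and simultaneously regularize the zero sum, I would apply the standard trick of evaluating the Hadamard expansion at $s=2$ and subtracting: this replaces $B_\mu$ together with the $1/\rho$ terms by $\Lambda'/\Lambda(2,\mu)$, which is $O_d(\log D_F)$ since the Euler product of $L'/L(2,\mu)$ converges absolutely, $\gamma'_\mu/\gamma_\mu(2) = O_d(1)$, and the $\tfrac12\log D_F$ contribution from $\Lambda$ is explicit.

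The core of the argument is then to estimate the now absolutely convergent zero sum
\[
 (1-it)\sum_\rho\frac{1}{(1+it-\rho)(2-\rho)}.
\]
I would bound its $(1+|t|)^{-4}$-weighted $L^1$-norm by bringing the absolute value inside the sum and integrating term by term. For each zero $\rho = \beta+i\gamma$, the estimate $|2-\rho|\gg 1+|\gamma|$ gains a factor of $(1+|\gamma|)^{-1}$, and splitting the $t$-integral into the near range $|t-\gamma|\le 1$ and its complement produces a contribution $\ll (1+|\gamma|)^{-3}\log\bigl(1/(1-\beta)\bigr)$ from the near range and $\ll (1+|\gamma|)^{-3}$ from the far range. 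Summing over $\rho$ using the Riemann--von Mangoldt-type counting bound $N_\mu(T+1)-N_\mu(T)\ll_d \log(D_F(|T|+2)^d)$ together with the classical zero-free region $1-\beta\gg 1/\log(D_F(|\gamma|+2)^d)$ for Hecke $L$-functions of unitary characters yields the claimed $O_d(\log D_F)$.

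The main technical obstacle is precisely the control of zeros close to the line $\Re s = 1$: the near-pole behavior of $1/(1+it-\rho)$ as $\beta\uparrow 1$ is what forces the use of the classical zero-free region, which is uniform in $D_F$ aside from at most one Siegel zero in the real-character case, whose isolated contribution is easily absorbed by the weight $(1+|t|)^{-4}$. All ingredients---the Hadamard factorization, Stirling's formula, the Riemann--von Mangoldt counting estimate, and the zero-free region---are classical and have direct analogues in the familiar treatment of $\zeta'_F/\zeta_F$ on the $1$-line.
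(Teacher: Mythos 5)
Your approach follows the same broad classical route that the paper invokes via M\"uller: Hadamard factorization of the completed $L$-function, Stirling for the archimedean factor with $\tfrac{1}{2}\log D_F$ as the main arithmetic term, and Riemann--von Mangoldt zero counting for the zero sum. But the final step does not quite close as you assert. Your near-range estimate $\ll (1+|\gamma|)^{-3}\log(1/(1-\beta))$ combined with the classical zero-free region $1-\beta\gg 1/\log(D_F(|\gamma|+2)^d)$ gives $\log(1/(1-\beta))\ll\log\log(D_F(|\gamma|+2)^d)$ per zero, and summing against the density bound $N_\mu(T+1)-N_\mu(T)\ll_d\log(D_F(|T|+2)^d)$ then yields $O_d(\log D_F\cdot\log\log D_F)$, not the claimed $O_d(\log D_F)$. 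That loss is harmless for the downstream application in Proposition~\ref{prop:spectral:limit}, which only needs a polynomial saving in $D_F$, but it falls short of the lemma as stated. The paper's route sidesteps this: it cites M\"uller's Selberg-type argument-principle estimate $\int_{-T}^{T}|L'/L(1+it,\mu)|\,dt\le CT(1+\log T+\log D_F)$, which controls total variation of $\arg L$ and $\log|L|$ via sign-change/zero counting rather than by bringing absolute values inside the zero sum, and thus avoids invoking the zero-free region at the 1-line altogether.

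The $\mu=1$ case deserves more than ``routine to treat separately.'' Since $L(s,\mu)$ here denotes the \emph{completed} $L$-function, for the trivial character $L(s,1)=\zeta_F^*(s)$ has a simple pole at $s=1$, so $|L'/L(1+it,1)|\sim 1/|t|$ as $t\to 0$ and the integral in the lemma literally diverges: as printed, the statement is false for $\mu=1$. You have correctly identified the right resolution---in Lemma~\ref{lem:spec:term2} the quantity actually needed is $n'/n(\chi,\lambda)=-\frac{(\zeta_F^*)'}{\zeta_F^*}(1-\lambda)-\frac{(\zeta_F^*)'}{\zeta_F^*}(1+\lambda)$, whose polar parts $\pm 1/(it)$ cancel at $\lambda=it$---and this is a genuine subtlety that the paper's statement glosses over. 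To make the lemma literally true one must either restrict to nonprincipal $\mu$ and treat $\mu=1$ through the combination $n'/n$, or subtract the polar term $-1/(s-1)$ from $L'/L$ before integrating.
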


\begin{proof}
We follow the arguments in \cite[\S 4, \S 5]{Mu07}. (At some places we can do a little better in our setting.)
Recall that $L(s,\mu)=\prod_{v} L_v(s, \mu_v)$ with $v$ running over all places of $F$, and
\[
L_v(s,\mu_v)
=
\begin{cases}
(1-\mu(\qqq_v)q_v^{-s})^{-1}
&
\text{if }v\text{ is non-archimedean},\\
\Gamma_{\C}(s)
&
\text{if }v\text{ is complex},\\
\Gamma_{\R}(s)
&
\text{if }v\text{ is real and }\mu_v(-1)=1,\\
\Gamma_{\R}(s+1)
&
\text{if }v\text{ is real and }\mu_v(-1)=-1,
\end{cases}
\]
where
\[
\Gamma_v(s)
=\begin{cases}
\pi^{-s/2}\Gamma(s/2)
&\text{if }v=\R\\
2 (2\pi)^{-s}\Gamma(s)
&\text{if }v=\C
\end{cases}
\]
and $\Gamma(s)$ denotes the usual $\Gamma$-function.

Since $\mu$ is unitary, $|\mu(\qqq_v)|=1$ for any $v$ so that
\[
\left|\frac{L_f'(s,\mu)}{L_f(s,\mu)}\right|
\leq \left|\frac{\zeta_F'(\Re(s))}{\zeta_F(\Re(s))}\right|
\leq d \left|\frac{\zeta'(\Re(s))}{\zeta(\Re(s))}\right|
\]
for any $s$ with $\Re(s)>1$.

Using Stirling's formula as in the proof of \cite[Lemma 4.4]{Mu07} we can find for every $\delta>0$ a constant $a_{\delta}>0$ depending only on $\signature$ and $\delta$ such that
\[
\left|\frac{L_{\infty}'(s,\mu)}{L_{\infty}(s,\mu)}\right|
\leq a_{\delta}(1+\log|s|)
\]
for every $s$ with $\Re(s)\geq 1+\delta$. Hence for any $\delta>0$ there is $b_{\delta}>0$ independent of $F$ and $\mu$ such that
\begin{equation}\label{eq:bound:log:der:L}
\left|\frac{L'(s,\mu)}{L(s, \mu)}\right|
\leq b_{\delta}(1+\log|s|)
\end{equation}
for all $s$ with $\Re(s)\geq 1+\delta$.

Let $\nu(\mu)$ denote the level of $\mu$ (see \cite[(4.22)]{Mu07} for a definition). There exists an absolute constant $c_2>0$ depending only on $\signature$ such that
\begin{equation}\label{eq:bound:level}
0\leq \nu(\mu)
\leq c_1 D_F N(\mu)=c_1 D_F
\end{equation}
for all $F\in\F_{\signature}$ and all $\mu$. Here $N(\mu)$ denotes the absolute norm of the conductor of $\mu$ which in our situation is $1$.

Arguing as in the proof of  \cite[Proposition 4.5, Proposition 5.1]{Mu07} and using \eqref{eq:bound:log:der:L} and \eqref{eq:bound:level} we get a constant $C>0$  depending only on $\signature$ such that for all $T>1$ we have
\[
\int_{-T}^T \left|\frac{L'(1+it,\mu)}{L(1+it, \mu)}\right|\,dt
\leq C T (1+\log T + \log D_F).
\]
This implies our claim.
\end{proof}

\begin{proof}[Proof of Proposition \ref{prop:spectral:limit}]
The proposition now easily follows from Lemma \ref{lem:spec:term1} and Lemma \ref{lem:spec:term2} by bounding the measures of the involved groups similarly as for the geometric side. 
It follows from the above lemmas that  for all $F\in\F_{\signature}$ we have
\begin{multline*}
\left|\frac{J_{\text{spec}}^F(f_{\infty}\cdot\One_{\cpt_f^F})
-J_{\text{disc}}^F(f_{\infty}\cdot\One_{\cpt_f^F})}{\vol(G(F)\backslash G(\A_F)^1)}\right|
\ll_{\signature} \frac{1}{\nu_F} D_F^{a_G/2} (\log D_F)^{a_G(d-1)} \\
\ll_{\signature, \eps} 
\begin{cases}
  \nu_F^{-\frac{1}{2}+\eps}			&\text{if }G=\GL(2),\\
 \nu_F^{-\frac{2}{3}+\eps}			&\text{if } G=\SL(2)
\end{cases}
\end{multline*}
for any $\eps>0$. This tends to $0$ as $F$ varies over $\F_{\signature}$.
\end{proof}

 \newcommand{\etalchar}[1]{$^{#1}$}
\providecommand{\bysame}{\leavevmode\hbox to3em{\hrulefill}\thinspace}
\providecommand{\MR}{\relax\ifhmode\unskip\space\fi MR }
\providecommand{\MRhref}[2]{%
  \href{http://www.ams.org/mathscinet-getitem?mr=#1}{#2}
}
\providecommand{\href}[2]{#2}


\end{document}